\newtheorem{definition}{Definition}
\numberwithin{definition}{section}
\numberwithin{equation}{section}
\newtheorem{theorem}[definition]{Theorem}
\newtheorem{lemma}[definition]{Lemma}
\newtheorem{corollary}[definition]{Corollary}
\newtheorem{proposition}[definition]{Proposition}
\theoremstyle{definition}
\newtheorem{assumption}{Assumption}
\Crefname{assumption}{Assumption}{Assumptions}
\crefname{assumption}{Assumption}{Assumptions}
\newlist{enumthm}{enumerate}{1}
\setlist[enumthm]{label=\textup{(\alph*)},ref=\theassumption~\textup{(\alph*)}}
\newcommand{\funArg}[1]{\ifthenelse{\equal{#1}{}}  %if #1={} then print nothing, else (#1)
	{}
	{[#1]}
}
\newcommand{\FunArg}[1]{\ifthenelse{\equal{#1}{}}  %if #1={} then print nothing, else (#1)
	{}
	{({#1})}
}
\newcommand{\Du}{\mathrm{D}}
\newcommand{\vadcsp}{V_{\text{ad}}}
\newcommand{\pobj}{J}
\newcommand{\rpobj}{f}
\newcommand{\embedding}{\hookrightarrow}
\newcommand{\erpobj}{F}
\newcommand{\csp}{U}
\newcommand{\adcsp}{\mathcal{D}(\psi)}
\newcommand{\adsp}{W}
\newcommand{\inner}[3][]{( #2, #3 )_{#1}}
\newcommand{\norm}[2][2]{\|#2\|_{#1}}
\newcommand{\Caratheodory}{Carath\'eodory}
\newcommand{\dualp}[3][]{\langle #2, #3 \rangle_{{#1}^*, #1}}
\newcommand{\dualpHzeroone}[3][]
{\langle #2, #3 \rangle_{H^{-1}(#1), H_0^1(#1)}}
\newcommand{\wto}{\rightharpoonup}
\newcommand{\domain}{D}
\newcommand{\lb}{\mathfrak{l}}
\newcommand{\ub}{\mathfrak{u}}
\DeclarePairedDelimiterXPP\E[1]{\mathbb{E}}[]{}{#1}
\newcommand{\objective}{G}
\renewcommand{\abstract}[1]
{
	{\small
		\textbf{Abstract.} {#1}
		\\
	}
}
\newcommand{\keywords}[1]
{
	{\small
		\textbf{Key words.} {#1}
		\\
	}
}
\newcommand{\amssubject}[1]
{
	{\small
		\textbf{AMS subject classifications.} {#1}
	}
}
\title{Empirical risk minimization for risk-neutral composite optimal control
with applications to bang-bang control\footnote{\textbf{Funding:} This material is based upon work supported by the National Science Foundation
under Grant No.\ DMS-2410944.}}
\author{Johannes Milz\thanks{H.\ Milton Stewart School of Industrial and Systems Engineering, Georgia Institute of Technology, Atlanta, Georgia 30332 (johannes.milz@isye.gatech.edu).} \and Daniel Walter\thanks{Institut für Mathematik, Humboldt-Universität zu Berlin, 10117 Berlin (daniel.walter@hu-berlin.de).}}
\begin{document}

\maketitle

\abstract{%
    Nonsmooth composite optimization problems under uncertainty are prevalent in various scientific and engineering applications.
    We consider risk-neutral composite optimal control problems, where the objective function is the sum of
    a potentially nonconvex expectation function and a nonsmooth convex function.
    To approximate the risk-neutral optimization problems, we use a Monte Carlo sample-based approach,
    study its asymptotic consistency, and derive nonasymptotic sample size estimates.
    Our analyses leverage problem structure commonly encountered in PDE-constrained optimization problems, including compact embeddings and growth conditions. We apply our findings to bang-bang-type optimal control problems and propose the use of a conditional gradient method to solve them effectively.
    We present numerical illustrations.
 }

\par
\keywords{%
	bilinear control,
    sample average approximation,
    empirical risk minimization,
    PDE-constrained optimization,
    stochastic optimization,
    bang-bang control
}
\par
\amssubject{%
	90C15, 90C59, 90C06, 35Q93, 49M41, 90C60
}

\section{Introduction}

We consider composite risk-neutral optimization problems of the form
\begin{equation*}
\label{eq:nonconvexriskneutral}
\min_{u \in \csp} \, \objective(u)\coloneqq \erpobj(Bu) + \psi(u),
\quad \text{where} \quad \erpobj(w) \coloneqq \E{\rpobj(w,\boldsymbol{\xi})},
\tag{P}
\end{equation*}
and $B \colon  \csp \to W$ is a compact linear operator
between the Hilbert space $\csp$ and
the Banach space $W$. Moreover, $f \colon W \times \Xi \to [0,\infty)$
is a sufficiently smooth integrand, and $\psi : \csp \to [0,\infty]$ is a proper, closed, and convex but potentially nonsmooth function. Moreover, $\boldsymbol{\xi}$ is a random element taking values in a complete, separable metric
space $\Xi$. While~\eqref{eq:nonconvexriskneutral} covers a variety of challenging settings, the focus of the present work lies on risk-neutral partial differential equation (PDE)-constrained optimal control problems
\begin{align} \label{eq:intro:nonconvexPDE}
	\min_{u\in L^2(\domain)}\,
	\E{\pobj(S(Bu,\boldsymbol{\xi}))} + \psi(u),
\end{align}
where $J$ is a tracking-type functional,
and the choice of the deterministic control~$u$ influences the behavior of the solution $S(Bu,\boldsymbol{\xi})$ to a PDE with random inputs $\boldsymbol{\xi}$.

Nonconvex optimization problems governed by differential equations
arise in a multitude of application areas, such as
sensor placement \cite{An2022}, resource assessment of renewable tidal energy \cite{Funke2016},
and design of groundwater remediation systems \cite{Guan1999}. It is well known that the proper choice of the penalty function~$\psi$ in Problem~\eqref{eq:intro:nonconvexPDE} promotes certain structural features in its minimizers~$u^*$. For example, setting
\begin{align*}
    \psi(u)=I_{U_{\text{ad}}}(u)+  \beta \|u\|_{L^1(D)},  \quad \text{with} \quad  U_{\text{ad}} \coloneqq \{\, u \in L^2(\domain) \;|\; -1 \leq u(x) \leq 1 \quad \text{for a.e.}~x\in \domain \, \},
\end{align*}
where
$I_{U_{\text{ad}}}$ is the   indicator function of $U_{\text{ad}}$, and~$\beta>0$, tends to provide solutions satisfying a \textit{bang-bang-off} principle, that is, $u^*(x) \in \{-1,0,1\}$ for a.e.\ $x\in\domain$.
Hence, composite optimal control problems also arise from relaxations of mixed-integer
optimal control problems \cite{Funke2016}.

To handle the expected value in~\eqref{eq:nonconvexriskneutral} numerically, we approximate it using the
sample average approximation (SAA) approach \cite{Kleywegt2002}.
More specifically,
given a sequence $\boldsymbol{\xi}^1, \boldsymbol{\xi}^2, \ldots, \boldsymbol{\xi}^N, \ldots$ defined on a complete probability space
$(\Omega, \mathcal{F}, P)$
of independent $\Xi$-valued random elements
such that each $\boldsymbol{\xi}^i$ has the same distribution as $\boldsymbol{\xi}$, we consider the SAA problem,
also known as the empirical risk minimization problem,
\begin{equation*}
\label{eq:nonconvexsaa}
\min_{u \in \csp} \, \hat{\erpobj}_N(Bu,\omega) + \psi(u),
\quad \text{where} \quad \hat{\erpobj}_N(w, \omega) \coloneqq  \frac{1}{N}
\sum_{i=1}^{N}\rpobj(w,\boldsymbol{\xi}^i(\omega)).
\tag{P\textsubscript{$N$}}
\end{equation*}
For brevity, we often omit the second argument
of $\hat{\erpobj}_N$.

A central topic  in this context is whether this type of approximation is asymptotically consistent and whether the proximity of its solutions and critical points to their counterparts in~\eqref{eq:nonconvexriskneutral} can be quantified in terms of the sample size~$N$.
Although this already represents a formidable problem for sufficiently smooth objective functions, the potential lack of strong convexity of~$\psi$ aggravates the problem further, adding additional challenges to the theoretical analysis of the problem and its efficient solution.
\subsection{Contributions} \label{subsec:contribution}
In the present paper, we investigate the SAA approach for nonsmooth, potentially nonconvex risk-neutral minimization problems of the form~\eqref{eq:nonconvexriskneutral} from a qualitative and a quantitative perspective. Since Problem~\eqref{eq:nonconvexriskneutral} is potentially nonconvex, our analysis addresses the behavior of its solutions and critical points. The latter refers to points~$u^*_N \in \csp$ satisfying the first-order necessary subdifferential inclusion
\begin{align*}
    -B^* \Du \hat{F}_N (Bu^*_N) \in \partial  \psi(u^*_N)
\end{align*}
or, equivalently, which are zeros of the \textit{SAA gap functional}
\begin{align}
\label{eq:SAAgapfunctional}
\hat{\Psi}_N (u) &\coloneqq \sup_{v \in \adcsp}\,
\lbrack \, \inner[\csp]{ B^* \Du \hat{F}_N (Bu)}{u-v} +\psi(u)-\psi(v)\, \rbrack,
\end{align}
where $\adcsp$ is the domain of $\psi$.
Critical points as well as the associated gap function~$\Psi$ for Problem~\eqref{eq:nonconvexriskneutral} are defined analogously.
More specifically, the contributions of the paper are as follows:
\begin{enumthm}
    \item For sufficiently smooth functions $F$, we show that the optimal values,  solutions, and critical points of Problem~\eqref{eq:nonconvexsaa} converge asymptotically to their deterministic counterparts of Problem~\eqref{eq:nonconvexriskneutral} with probability one (abbreviated as ``w.p.~$1$'') as the sample size~$N$ tends to infinity.
    \item We establish nonasymptotic expectation bounds for the gap functional 	evaluated at sequences~$(u_N) \subset U$ of random vectors. More precisely, we show that, for every~$\varepsilon>0$,
    \begin{align*}
        \E{\Psi(u_N)} \leq \E{\hat{\Psi}_N(u_N)}  + \varepsilon \quad \text{if} \quad N \geq \frac{c_1}{\varepsilon^2} \cdot
        \ln\big(\mathcal{N}(\varepsilon/c_1; B(\adcsp))\big),
    \end{align*}
     where $\mathcal{N}(\nu; B(\adcsp))$ with $\nu > 0$
     denotes the $\nu$-covering number of the compact set $B(\adcsp)\subset W$, and $c_1 > 0$ is a constant that explicitly depends on the
     radius of the domain of $\psi$ as well as the integrand~$f$.
\item While Problem~\eqref{eq:nonconvexriskneutral} is typically not strongly convex, we provide nonasymptotic expectation bounds for the approximation of minimizers~$u^*$ to~\eqref{eq:nonconvexriskneutral} by  solutions to Problem~\eqref{eq:nonconvexsaa} if the integrand is convex for
each $\xi \in \Xi$,
and a growth-type condition on the partial linearization of~$\objective$ at a minimizer~$u^*$
holds true. Specifically, if
    	\begin{align}
	\label{eq:nonconvex:growth}
	\inner[\csp]{B^*\Du\erpobj(Bu^*)}{u-u^*}
	+ \psi(u) - \psi(u^*) \geq
	\theta \norm[\mathcal{U}]{u-u^*}^{2}
	\quad \text{for all} \quad u \in \csp
	\end{align}
for some $\theta >0$ and $U$ continuously embeds into the Banach space~$\mathcal{U}$, then
    \begin{align*}
	\E{\norm[\mathcal{U}]{u_N^*-u^*}^{2}}+ \E{\Psi(u^*_N)}+\E{\objective(u^*_N)- \objective(u^*)}
	\leq (1/N)c_2,
	\end{align*}
 where $c_2 > 0$ depends on
 the integrand, and $\theta$.
 Hence, the SAA solutions converge in expectation at the usual Monte Carlo rate  while both the value of the gap functional and the suboptimality in~$\objective$ exhibit superconvergence at a rate of~$(1/N)$.  This convergence statement is valid for linear bounded operators
 $B$ without  requiring their compactness.
\end{enumthm}

Our framework is applied to PDE-constrained optimization problems governed by affine-linear and bilinear elliptic equations, which allow for the use of bang-bang-off regularization terms. Reproducible numerical experiments empirically verify our theoretical results and further highlight the utility of the SAA approach for infinite-dimensional, nonsmooth problems.

\subsection{Related work}

Monte Carlo sample-based approximations are common
discretization approaches for risk-neutral huge-scale
optimization, particularly in PDE-constrained optimization \cite{Milz2022},  and
stochastic optimization \cite{Shapiro2021}. The theoretical analyses of this approximation approach
may be categorized into asymptotic and nonasymptotic ones.
In PDE-constrained
optimization under uncertainty with strongly convex control regularizers, the
papers \cite{Hoffhues2020,Martin2021,Milz2022c,Roemisch2021} provide
nonasymptotic analyses, such as sample size estimates and central limit-type
theorems. Moreover, \cite{Milz2022,Milz2022d,Milz2022a} provide asymptotic
consistency results for nonconvex infinite-dimensional stochastic
optimization.
The problem structure given by the compact linear operator in \eqref{eq:intro:nonconvexPDE}
is common among PDE-constrained optimization problems and
has been used for different purposes in the literature.
For example, the authors of \cite{Milz2022a} have employed it to demonstrate
the consistency of empirical estimators for risk-averse stochastic optimization.
Finally, \cite{Milz2022b} establishes nonasymptotic sample size
estimates for risk-neutral semilinear PDE-constrained optimization. In the field of PDE-constrained
optimization under uncertainty, current SAA analyses require certain
control regularizers, such as standard Tikhonov regularizers
\cite{Hoffhues2020,Milz2021}, R-functions \cite{Milz2022a}, and Kadec functions \cite{Milz2022d}. These requirements exclude
the \(L^1\)-norm as a regularizer, for example.
Using compactness of the linear operator  in \eqref{eq:intro:nonconvexPDE},
we are able to extend previous results to general convex control regularizers.

A variety of alternative discretization strategies have been proposed and analyzed for stochastic optimization problems; these include
(randomized) quasi-Monte Carlo methods \cite{Guth2019,Guth2024,Heitsch2016,Koivu2005,Melnikov2025,Pennanen2005}, low-rank tensor approximations 
\cite{Antil2023,Benner2016,Garreis2017},
and stochastic collocation approaches 
\cite{Kouri2013,Tiesler2012,Zahr2019}.
Among these, Monte Carlo-based sampling approaches are the most broadly applicable, though each method offers distinct advantages and limitations.

As discussed in the previous section,  our sample size estimates for convex problems are independent of the compactness of either the feasible set or the operator
$B$. Sample size estimates for stochastic convex optimization without  compactness conditions are also provided in \cite{Liu2024,Milz2021,Shalev-Shwartz2010}.

\subsection{Outline}

\Cref{sec:notation} introduces notation and terminology.
\Cref{sect:risk-neutral-optimal-control} introduces assumptions
on the risk-neutral composite control problems, studies the existence
of solutions, states  optimality conditions, and establishes consistency of SAA optimal values,
solutions, and critical points. \Cref{sect:risk-neutral-optimal-control}
also derives sample size estimates for nonconvex and convex
problems. Our framework is applied to linear and bilinear PDE-constrained
optimization in \Cref{sec:bangbangcontrol}. \Cref{sec:numerics}
presents numerical illustrations. The appendix derives uniform
expectation bounds.

\section{Notation and terminology}
\label{sec:notation}

Throughout the text, normed vector spaces $X$ are defined over the reals. Metric spaces $X$
are equipped with their Borel sigma-field $\mathcal{B}(X)$.
Moreover, we identify the control space $U$, a separable
Hilbert space, with $U^*$, and write $U^* = U$.
For a Banach
space $(X,\norm[X]{\cdot})$ with norm $\norm[X]{\cdot}$, we denote by $X^*$
its topological dual space and by $\dualp[X]{\cdot}{\cdot}$ its
duality pairing. If the norm is clear from the context, we write $X$ instead
of $(X,\norm[X]{\cdot})$. The inner product on a Hilbert space $H$ is denoted
by $\inner[H]{\cdot}{\cdot}$.
A sequence $(x_k) \subset X$ is said to converge 
\emph{weakly} to $x \in X$ if for each  $\ell \in X^*$, 
$\dualp[X]{\ell}{x_k} \to 
\dualp[X]{\ell}{x}
$
as
$k \to \infty$.
If $X$ and $Y$ are Banach spaces with
$X \subset Y$, and $\imath \colon X \to Y$
defined by $\imath x \coloneqq x$ is linear and bounded, then we say that $X$
is \emph{(continuously) embedded} into $Y$. We abbreviate such embeddings by
$X \embedding Y$. A linear operator between two Banach spaces is called
\emph{compact} if its image of the domain space's unit ball is precompact.
For a linear bounded operator $\Upsilon$
between Banach spaces, $\Upsilon^*$
denotes its adjoint operator.
For a measurable space $(\Theta, \mathcal{A})$ and
metric spaces $X$ and $Y$, $h \colon X \times \Theta \to Y$ is
called \emph{\Caratheodory\ mapping} if $h(\cdot, \theta)$ is continuous for all
$\theta \in \Theta$ and $h(x,\cdot)$ is $\mathcal{A}$-$\mathcal{B}(Y)$
measurable for all $x \in X$.
For a bounded domain $\domain \subset \mathbb{R}^d$, we denote by
$L^p(\domain)$ $(1 \leq p \leq \infty)$ the standard
Lebesgue spaces,
and by $H_0^1(\domain)$
and $H^s(\domain)$ ($s \in \mathbb{N}$)
the standard Sobolev spaces.
We set
$H^{-1}(\domain) \coloneqq H_0^1(\domain)^*$.
Let \((Y, d_Y)\) be a metric space, and let \(\nu > 0\).  
A finite set \(\{y_1, y_2, \dots, y_K\} \subseteq Y\) is called a \emph{\(\nu\)-net} of \(Y\) if for every \(y \in Y\), there exists \(k \in \{1, \dots, K\}\) such that
$
d_Y(y, y_k) \le \nu
$.
For a nonempty, compact metric space $Y$, the $\nu$-covering number
$\mathcal{N}(\nu; Y)$, $\nu>0$,
denotes the minimal number of points
in a $\nu$-net of $Y$.

\section{Risk-neutral  composite  optimal control}
\label{sect:risk-neutral-optimal-control}

We start by stating the main assumptions on Problem \eqref{eq:nonconvexriskneutral}.
We recall that
the domain of $\psi$ is denoted by
\begin{align*}
	\adcsp \coloneqq \{\, u \in \csp  \;|\;\psi(u) < \infty \,\}.
\end{align*}

The following assumption imposes the properties of~$\psi$ and  the smoothness and integrability requirements on the integrand.
In the following two assumptions, all integrability conditions are understood with respect to the distribution of $\boldsymbol{\xi}$.

\begin{assumption}[{Integrand and feasible set}]
    \label{assumption:objective_consistency}
    ~
	\begin{enumthm}[wide,nosep,leftmargin=*]
		\item \label{itm:assumption:contspace}
  The control space $\csp$ is a separable Hilbert space, and
        $\adsp$ is a separable Banach space.

		\item\label{itm:assumption:B} The operator
		$B : \csp \to \adsp$ is linear and compact.

		\item\label{itm:assumption:nonsmooth} The function $\psi : \csp \to [0,\infty]$
		  is proper, closed, and convex, and $\adcsp$ is bounded.
        \item The set $\csp_0 \subset \csp$
        is open, convex, and bounded
        with $\adcsp \subset \csp_0$.
        Moreover,
        $f \colon B(\csp_0) \times \Xi \to \mathbb{R}$ is continuous
        in its first argument
        on $B(\adcsp)$
        for each $\xi \in \Xi$
        and measurable in its second
        argument on $\Xi$ for each $w \in B(\csp_0)$.
		\item
        For an integrable random variable
		$\zeta_{\rpobj} : \Xi \to [0,\infty)$, it holds that
  \begin{align*}
      |\rpobj(Bu,\xi)| \leq \zeta_{\rpobj}(\xi) \quad \text{for all} \quad (u,\xi) \in  \csp_{0} \times \Xi.
  \end{align*}
        \end{enumthm}

\end{assumption}
% \change{%
% \Cref{itm:assumption:nonsmooth} differs from the  Carath\'eodory  setting.
% Classically, Carath\'eodory  mappings are defined on the direct product of a complete
% separable metric space and a sample space.
% If \(B(\csp_0)\) were closed and
% continuity property held on all of \(B(\csp_0)\), \Cref{itm:assumption:nonsmooth} 
% would reduce to the
% standard Carath\'eodory condition on \(B(\csp_0)\times\Xi\).
% }

The following assumption formulates
differentiability and integrability statements.

\begin{assumption}[{Integrand and its gradient}]
\label{assumption:gradient_consistency}
~
	\begin{enumthm}[wide,nosep,leftmargin=*]
		\item For each $\xi \in \Xi$, the mapping
    \begin{align*}
        g_{\xi} \colon U_0 \to \mathbb{R}, \quad \text{where} \quad g_{\xi}(u) \coloneqq f(Bu,\xi),
    \end{align*}
    is continuously differentiable.

    \item There exists a Carath\'eodory function~$\Du_w f \colon B(\adcsp) \times \Xi \to W^*$ such that
    \begin{align*}
       \nabla g_\xi(u) = B^* \Du_w f(Bu,\xi) \quad \text{for all} \quad (u,\xi) \in \adcsp \times \Xi.
    \end{align*}
		\item  There exists an integrable random variable
		$\zeta_{\Du \rpobj} : \Xi \to [0,\infty)$ such that
\begin{align*}\norm[U]{\nabla g_\xi(u)}
		&\leq \zeta_{\Du \rpobj}(\xi) \quad \text{for all} \quad  (u,\xi) \in \csp_0 \times \Xi, \quad \quad \text{and} \\
  \norm[\adsp^*]{\Du_w f(Bu,\xi)}
		&\leq \zeta_{\Du \rpobj}(\xi) \quad \text{for all} \quad  (u,\xi) \in \adcsp \times \Xi.
  \end{align*}
	\end{enumthm}
\end{assumption}

If~$f(\cdot,\xi)$ is continuously differentiable on a~$W$-neighborhood of~$B(\adcsp)$, the differentiability of~$ g_\xi(\cdot) = f(B \cdot,\xi)$ and the gradient representation are implied by the chain rule. From this perspective,
\Cref{assumption:gradient_consistency} ensures that the gradients of~$F \circ B $ and~$\hat{F}_N \circ B$  have  composite structure, even when the chain rule does not apply. While this might seem technical at first glance, it allows us to fit challenging settings into the outlined abstract framework. For a particular example, we refer  to the bilinear PDE-constrained problem in  \Cref{subsect:bilinearproblem}.

\subsection{Existence of solutions and optimality conditions}
In this section, we show that both the risk-neutral problem
\eqref{eq:nonconvexriskneutral} and
the associated SAA problems
\eqref{eq:nonconvexsaa} admit solutions.
We also show
the measurability of the SAA optimal value and the existence of measurable SAA
solutions.
Moreover, we introduce the particular form of first-order necessary optimality conditions used in the following sections.

For $\omega \in \Omega$
and $N \in \mathbb{N}$, we denote by
$\hat{\vartheta}^*_N(\omega)$ the optimal value of the SAA problem
\eqref{eq:nonconvexsaa}.

\begin{proposition} \label{lem:objectivesmooth}
    Let \Cref{assumption:objective_consistency} hold, and let $N \in \mathbb{N}$. Then,
    \textnormal{(i)} the risk-neutral problem \eqref{eq:nonconvexriskneutral} and
    for each $\omega \in \Omega$,
    the SAA problem
    \eqref{eq:nonconvexsaa} admit solutions,
    \textnormal{(ii)}
    the function
    $\hat{\vartheta}^*_N
    \colon \Omega \to \mathbb{R}$
    is measurable, and
    \textnormal{(iii)}
    there exists at least one measurable map
    $u^*_N \colon \Omega \to U$
    such that for each $\omega \in \Omega$,
    $u^*_N(\omega)$ solves
    the SAA problem \eqref{eq:nonconvexsaa}.
    \textnormal{(iv)}
    If, moreover,
    \Cref{assumption:gradient_consistency}
	holds, then the composite functions
	$u \mapsto F(Bu)$ and $u \mapsto \hat F_N(Bu)$ are continuously differentiable
    on $U_0$ with gradients
	$u \mapsto  \E{\nabla g_{\boldsymbol{\xi}}(u)}$ and
	$u \mapsto (1/N) \sum_{i=1}^N \nabla g_{\boldsymbol{\xi}^i}(u)$,
    respectively.
\end{proposition}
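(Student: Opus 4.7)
The plan for (i) is the direct method of the calculus of variations. By \Cref{itm:assumption:nonsmooth}, $\adcsp$ is bounded in the separable Hilbert space $\csp$, so any minimizing sequence $(u_n) \subset \adcsp$ admits a subsequence with $u_n \wto u^*$ in $\csp$. The compactness of $B$ from \Cref{itm:assumption:B} yields $Bu_n \to Bu^*$ strongly in $\adsp$, whence \Cref{assumption:objective_consistency} gives $\rpobj(Bu_n,\xi) \to \rpobj(Bu^*,\xi)$ pointwise, dominated by the integrable $\zeta_{\rpobj}$; dominated convergence then delivers $\erpobj(Bu_n) \to \erpobj(Bu^*)$, and the same argument applied to the empirical average yields $\hat\erpobj_N(Bu_n,\omega) \to \hat\erpobj_N(Bu^*,\omega)$ for each fixed $\omega$. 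Since $\psi$ is proper, closed and convex, it is weakly sequentially lower semicontinuous on $\csp$, so $\psi(u^*) \leq \liminf_n \psi(u_n) < \infty$, and $u^*$ attains the infimum in both \eqref{eq:nonconvexriskneutral} and \eqref{eq:nonconvexsaa}.

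For (ii) and (iii), I would recast the SAA problem as a normal-integrand minimization on a Polish space. Define the objective $H_N(u,\omega) \coloneqq \hat\erpobj_N(Bu,\omega) + \psi(u)$; for each fixed $\omega$ the map $H_N(\cdot,\omega)$ is weakly sequentially lower semicontinuous on $\adcsp$ by the argument in (i), while $\omega \mapsto H_N(u,\omega)$ is measurable as a finite sum of compositions of the \Caratheodory\ integrand $\rpobj(Bu,\cdot)$ with the measurable random elements $\boldsymbol{\xi}^i$. Because $\csp$ is separable and $\adcsp$ is bounded, the weak topology restricted to $\overline{\adcsp}$ is metrizable and compact, hence Polish. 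I would then invoke a measurable maximum theorem (e.g., Rockafellar--Wets, Theorem~14.37) to deduce measurability of $\hat\vartheta^*_N$ and the existence of a measurable selector $\omega \mapsto u^*_N(\omega)$ of the argmin multifunction.

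For (iv), I would differentiate under the expectation. Fix $u \in \csp_0$ and a direction $v$ with $u+tv \in \csp_0$ for small $|t|$. By \Cref{assumption:gradient_consistency}, $g_{\xi}$ is continuously differentiable with $\norm[\csp]{\nabla g_\xi(u)} \leq \zeta_{\Du\rpobj}(\xi)$, and the mean value theorem bounds the difference quotient $[g_\xi(u+tv) - g_\xi(u)]/t$ in absolute value by $\zeta_{\Du\rpobj}(\xi)\,\norm[\csp]{v}$. Dominated convergence then justifies
\begin{equation*}
    (\erpobj \circ B)'(u)v \;=\; \E{\inner[\csp]{\nabla g_{\boldsymbol{\xi}}(u)}{v}} \;=\; \inner[\csp]{\E{\nabla g_{\boldsymbol{\xi}}(u)}}{v},
\end{equation*}
identifying the gradient with the Bochner integral $\E{\nabla g_{\boldsymbol{\xi}}(u)} \in \csp$. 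A second dominated convergence argument, now using continuity of $\nabla g_\xi$ in $u$, shows that this gradient depends continuously on $u$, so $\erpobj \circ B$ is continuously differentiable on $\csp_0$; the SAA statement is immediate as a finite sum of continuously differentiable maps.

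The main obstacle lies in (ii)--(iii): passing from weak lower semicontinuity on the infinite-dimensional set $\adcsp$ to a normal-integrand framework in which off-the-shelf measurable selection applies. The essential step is exploiting metrizability of the weak topology on bounded subsets of separable Hilbert spaces to rephrase the minimization on a Polish metric space; once this reduction is accomplished, the remaining measurability and selection arguments are routine, and parts (i) and (iv) reduce to fairly standard applications of weak compactness and dominated convergence, respectively.
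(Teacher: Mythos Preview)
Your proof is correct, and parts (i) and (iv) match the paper's sketch (direct method plus dominated convergence/Fatou, and differentiation under the integral, respectively). For (ii)--(iii), however, you take a genuinely different route.

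The paper handles the nonsmooth term $\psi$ by an epigraph reformulation: it minimizes the \emph{Carath\'eodory} function $(u,t,\omega)\mapsto \hat{\erpobj}_N(Bu,\omega)+t$ over the fixed closed set $\{(u,t)\in\adcsp\times\mathbb{R}:\psi(u)\leq t\}$, and then invokes Aubin--Frankowska's measurability theorems on marginal maps and inverse images directly. Your approach instead keeps $\psi$ inside the objective $H_N(u,\omega)=\hat{\erpobj}_N(Bu,\omega)+\psi(u)$, metrizes the weak topology on $\overline{\adcsp}$ to obtain a compact Polish space, and appeals to normal-integrand/measurable-maximum machinery. The epigraph trick buys a continuous (rather than merely lower semicontinuous) integrand, so the Carath\'eodory hypotheses of the cited selection theorems are immediate; your route is more direct but requires checking that the sum of a Carath\'eodory part and a deterministic lsc part is a normal integrand on a Polish space, and the reference you give (Rockafellar--Wets, Theorem~14.37) is formulated for $\mathbb{R}^n$, so you should point to an infinite-dimensional analogue (e.g., Aubin--Frankowska or Castaing--Valadier) to make the citation precise.
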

\begin{proof}
 \textnormal{(i)}
 Since~$\adcsp$ is weakly compact, the proof follows by standard arguments as well as Fatou's lemma.

  \textnormal{(ii)--(iii)}
  Since
    $
        (u,t,\omega) \mapsto \hat{\erpobj}_N(Bu, \omega) + t
    $
    is Carath\'eodory
    on  $\{(u,t) \in
    \adcsp \times \mathbb{R}
    \colon \psi(u) \leq t
    \} \times \Omega$,
    measurability theorems on
    marginal maps and inverse images \cite[Thms.\ 8.2.9 and 8.2.11]{Aubin2009}
    imply the assertions.

    \textnormal{(iv)}
    The assertions can be established
    using standard arguments.
In particular, we can apply Lemma~C.3 from \cite{Geiersbach2020} to deduce 
Fr\'echet differentiability. Using the dominated convergence theorem, we can 
establish the continuity of the Fr\'echet derivatives.
\end{proof}

\Cref{assumption:gradient_consistency,lem:objectivesmooth}
motivate the definitions
\begin{align*}
    \Du F(w) \coloneqq
    \E{\Du_w f(w,\boldsymbol{\xi})}
    \quad \text{and} \quad
    \Du \hat{F}_N(w, \omega)
    \coloneqq \frac{1}{N}
    \sum_{i=1}^N \Du_w f(w,\boldsymbol{\xi}^i(\omega))
\end{align*}
as mappings on $B(\adcsp)$ and $B(\adcsp) \times \Omega$, respectively.
We note that these definitions are formal in the sense that
while $\E{\Du_w f(w,\boldsymbol{\xi})}$ is well-defined, $F$ may not be continuously differentiable, see also the discussion following \Cref{assumption:gradient_consistency}.
As with $\hat{F}_N$, we often omit
the second argument of $\Du \hat{F}_N$.

Furthermore, we define the \emph{gap functional}
$\Psi : \adcsp \to [0,\infty]$ 
\begin{align}
\label{def:gapfunctional}
\begin{aligned}
\Psi(u) \coloneqq \sup_{v \in \adcsp} \, \lbrack \, \inner[\csp]{ B^* \Du F(Bu)}{u-v} +\psi(u)-\psi(v) \, \rbrack,
\end{aligned}
\end{align}
and recall that its SAA counterpart, $\hat{\Psi}_N$, is defined in \eqref{eq:SAAgapfunctional}.
A point $\bar u \in \adcsp$ is referred to as \emph{critical point}
of \eqref{eq:nonconvexriskneutral} if
$- B^* \Du F(B\bar u) \in \partial \psi(\bar u)$.
Similarly, a point $\bar u_N \in \adcsp$ is called a \emph{critical point}
of the SAA problem \eqref{eq:nonconvexsaa} if
$- B^*\Du \hat{F}_N(\bar u_N) \in \partial \psi(\bar u_N)$. Critical points of \eqref{eq:nonconvexriskneutral} are the zeros
of the gap function $\Psi$ as
summarized in the following proposition.

\begin{proposition} \label{prop:propertiesofgap}
	If \Cref{assumption:objective_consistency,assumption:gradient_consistency}
	hold, then
	\emph{(i)}
	an element~$\bar u \in \adcsp$ is a critical point
	of \eqref{eq:nonconvexriskneutral} if and only if $\Psi(\bar u) = 0$,
     \emph{(ii)}
    a point $\bar u_N \in \adcsp$ is a critical point
    of \eqref{eq:nonconvexsaa} if and only if $\hat{\Psi}_N(\bar u_N) = 0$, and
	\emph{(iii)}
    $\Psi$ is weakly lower semicontinuous.
\end{proposition}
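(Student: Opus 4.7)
The plan is to prove~\textnormal{(i)} and~\textnormal{(ii)} by unwinding the definition of the subdifferential of~$\psi$, and to prove~\textnormal{(iii)} by writing~$\Psi$ as a pointwise supremum of weakly lower semicontinuous functionals of~$u$.

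For~\textnormal{(i)}, I first observe that choosing~$v=\bar u$ in the supremum defining~$\Psi(\bar u)$ yields the value~$0$, so~$\Psi(\bar u)\ge 0$ for every $\bar u \in \adcsp$. Consequently,~$\Psi(\bar u)=0$ is equivalent to
$\psi(v) \ge \psi(\bar u) + \inner[\csp]{-B^*\Du \erpobj(B\bar u)}{v-\bar u}$
holding for all~$v\in \adcsp$. Because~$\psi(v)=+\infty$ whenever~$v\notin \adcsp$, the same inequality trivially extends to every~$v\in \csp$, which is precisely the defining property of~$-B^*\Du \erpobj(B\bar u)\in \partial\psi(\bar u)$. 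The converse direction follows from the same chain of equivalences read backwards. Part~\textnormal{(ii)} is obtained by repeating this argument verbatim with~$\hat\erpobj_N$ and~$\bar u_N$ in place of~$\erpobj$ and~$\bar u$.

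For~\textnormal{(iii)}, I write $\Psi(u)=\sup_{v\in \adcsp}\phi_v(u)$, where
$\phi_v(u)\coloneqq \inner[\csp]{B^*\Du \erpobj(Bu)}{u-v}+\psi(u)-\psi(v)$,
extended by~$+\infty$ for $u\notin \adcsp$, and show that each~$\phi_v$ is weakly lower semicontinuous on~$\csp$. The term~$\psi(u)$ is weakly lsc because~$\psi$ is convex and closed, and~$-\psi(v)$ is a constant in~$u$. For the bilinear term, I would first verify that~$\Du \erpobj\colon B(\adcsp)\to \adsp^*$ is norm-continuous: this follows from the \Caratheodory\ property of~$\Du_w \rpobj$ in \Cref{assumption:gradient_consistency} combined with the dominated convergence theorem, using the integrable envelope~$\zeta_{\Du \rpobj}$. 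By \Cref{itm:assumption:B}, $B$ is compact, so any weakly convergent~$u_n\wto u$ in~$\csp$ satisfies~$Bu_n\to Bu$ strongly in~$\adsp$; hence~$B^*\Du \erpobj(Bu_n)\to B^*\Du \erpobj(Bu)$ strongly in~$\csp$ (using the identification~$\csp^*=\csp$). Pairing this strongly convergent sequence with the weakly convergent~$u_n-v\wto u-v$ yields
$\inner[\csp]{B^*\Du \erpobj(Bu_n)}{u_n-v}\to \inner[\csp]{B^*\Du \erpobj(Bu)}{u-v}$,
so the bilinear term is in fact weakly continuous. Consequently each~$\phi_v$ is weakly lsc, and the pointwise supremum~$\Psi$ inherits this property.

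The only step demanding any care is the norm-continuity of~$\Du \erpobj$ on~$B(\adcsp)$; once that is in hand, the remainder is a straightforward matter of unpacking the definitions and exploiting the compactness of~$B$.
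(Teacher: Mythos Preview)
Your proposal is correct and follows essentially the same approach as the paper: the paper simply cites Theorem~2.4 in \cite{Kunisch2021} for parts~(i)--(ii) and Theorem~2.5(c) in \cite{Herzog2012} together with the complete continuity of~$B$ for part~(iii), and what you wrote is a faithful in-line expansion of those cited arguments. The only ingredient you flagged as needing care---norm continuity of~$\Du\erpobj$ on~$B(\adcsp)$ via dominated convergence---is indeed the right point, and is granted by \Cref{assumption:gradient_consistency}.
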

\begin{proof}
	Parts (i) and (ii) can be established using arguments similar to those used
    to prove Theorem~2.4 in \cite{Kunisch2021}.
    Part (iii) follows from Theorem~2.5~(c) in \cite{Herzog2012}
    because  $B$ is completely continuous.
\end{proof}

\subsection{Consistency of SAA optimal values and solutions}

We establish the asymptotic consistency
of SAA optimal values and
the weak consistency of SAA solutions.
We denote by $\vartheta^*$ the optimal value
of the true problem \eqref{eq:nonconvexriskneutral}.

\begin{theorem}
\label{thm:nonconvex_consistency_solutions}
	If \Cref{assumption:objective_consistency} holds, then
    \textnormal{(i)}
	$\hat{\vartheta}_N^* \to \vartheta^*$ as $N \to \infty$
	w.p.~$1$, and
    \textnormal{(ii)}
    for almost every $\omega \in \Omega $, $(u^*_N(\omega))$ has
    at least one weak accumulation point and each
    such point solves
	\eqref{eq:nonconvexriskneutral}.
\end{theorem}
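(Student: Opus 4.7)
The plan is to base everything on a uniform strong law of large numbers for $\hat{\erpobj}_N$ on the set $B(\adcsp) \subset \adsp$, which is precompact in $\adsp$ by the compactness of $B$ in \Cref{itm:assumption:B} and the boundedness of $\adcsp$ in \Cref{itm:assumption:nonsmooth}. Since $f$ is \Caratheodory\ on $B(\csp_0) \times \Xi$ with integrable envelope $\zeta_{\rpobj}$, a standard uniform strong LLN then delivers
\begin{align*}
\sup_{w \in B(\adcsp)} \bigl|\hat{\erpobj}_N(w,\omega) - \erpobj(w)\bigr| \longrightarrow 0 \quad \text{w.p.\ } 1,
\end{align*}
and, as a by-product, the continuity of $\erpobj$ on $B(\adcsp)$. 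I would invoke a ready-made version of this result (or prove it by bracketing on a finite $\nu$-net combined with the envelope bound).

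For the upper bound in (i), I would fix a solution $u^*$ of~\eqref{eq:nonconvexriskneutral}, whose existence is guaranteed by \Cref{lem:objectivesmooth}. Feasibility of $u^*$ in~\eqref{eq:nonconvexsaa} together with the pointwise LLN at $Bu^*$ yields
\begin{align*}
\hat{\vartheta}_N^* \leq \hat{\erpobj}_N(Bu^*) + \psi(u^*) \longrightarrow \erpobj(Bu^*) + \psi(u^*) = \vartheta^* \quad \text{w.p.\ } 1.
\end{align*}

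For (ii), I restrict attention to a full-measure event $\Omega_0$ on which the uniform LLN and the preceding upper bound hold. Fix $\omega \in \Omega_0$. Boundedness of $\adcsp$ in the Hilbert space $\csp$ gives a weakly convergent subsequence $u_{N_k}^*(\omega) \wto u^{**}$ in $\csp$, and compactness of $B$ promotes this to $Bu_{N_k}^*(\omega) \to Bu^{**}$ in $\adsp$. Since $\hat{\erpobj}_N \geq 0$, the inequality $\psi(u_{N_k}^*(\omega)) \leq \hat{\vartheta}_{N_k}^*(\omega) \leq \hat{\erpobj}_{N_k}(Bu^*,\omega) + \psi(u^*)$ and the upper bound above keep $\psi(u_{N_k}^*(\omega))$ bounded, so weak lower semicontinuity of $\psi$ yields $u^{**} \in \adcsp$. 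For any competitor $v \in \adcsp$, combining weak lsc of $\psi$, continuity of $\erpobj$ at $Bu^{**}$, the uniform LLN, and SAA-optimality of $u_{N_k}^*(\omega)$ gives
\begin{align*}
\erpobj(Bu^{**}) + \psi(u^{**})
&\leq \liminf_k \bigl[\erpobj(Bu_{N_k}^*(\omega)) + \psi(u_{N_k}^*(\omega))\bigr] \\
&= \liminf_k \hat{\vartheta}_{N_k}^*(\omega) \leq \lim_k \bigl[\hat{\erpobj}_{N_k}(Bv,\omega) + \psi(v)\bigr] = \erpobj(Bv) + \psi(v),
\end{align*}
where the equality absorbs the gap between $\erpobj$ and $\hat{\erpobj}_{N_k}$ via the uniform LLN. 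Thus $u^{**}$ solves~\eqref{eq:nonconvexriskneutral}. Choosing $v = u^{**}$ in this chain also produces $\liminf_N \hat{\vartheta}_N^* \geq \vartheta^*$ w.p.~$1$, which together with the upper bound finishes (i).

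The only nontrivial ingredient is the uniform strong LLN on $B(\adcsp)$: it is what requires both the compactness of $B$ and the \Caratheodory-plus-envelope structure imposed in \Cref{assumption:objective_consistency}, and it is the sole place where one cannot proceed by a purely pointwise argument. Once it is in place, the rest is the standard interplay between weak compactness of $\adcsp$, strong compactness of $B$, and weak lower semicontinuity of $\psi$.
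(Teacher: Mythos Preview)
Your proof is correct and uses essentially the same ingredients as the paper's: the uniform strong law of large numbers on the precompact set $B(\adcsp)$, strong convergence $Bu_{N_k}^*(\omega) \to Bu^{**}$ via compactness of $B$, and weak lower semicontinuity of $\psi$. The only organizational difference is that the paper dispatches part~(i) in one line via the direct estimate
\[
|\hat{\vartheta}_N^*(\omega) - \vartheta^*| \leq \sup_{w \in B(\adcsp)} |\erpobj(w) - \hat{\erpobj}_N(w,\omega)|,
\]
whereas you obtain the upper bound from feasibility of $u^*$ and recover the lower bound as a by-product of the part~(ii) argument; both routes are valid, the paper's being slightly more self-contained for~(i).
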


\begin{proof}
    Before we establish parts (i) and (ii), let us note that the uniform
    law of large numbers \cite[Cor.\ 4:1]{LeCam1953}
    ensures the
    existence of a null set
	$\Omega_0 \subset \Omega$ with $\Omega_0 \in \mathcal{F}$
	such that for all $\omega \in \Omega \setminus \Omega_0$, we have
    \begin{align}
        \label{eq:ULLN:objective}
        \sup_{w \in B(\adcsp)} |\erpobj(w) - \hat{\erpobj}_N(w,\omega)| \to 0
        \quad \text{as} \quad N \to \infty.
    \end{align}
    Moreover,
	$\erpobj$ is continuous on $B(\adcsp)$ owing
    to the dominated convergence theorem.

    \textnormal{(i)}
    We have for all
    $\omega \in \Omega$,
    \begin{align*}
        |\hat{\vartheta}_N^*(\omega) - \vartheta^*|
        \leq \sup_{u \in \adcsp}
	|\erpobj(Bu) + \psi(u) - \big(\hat{\erpobj}_N(Bu,\omega) + \psi(u)\big)|
	\leq
	\sup_{w \in B(\adcsp)} |\erpobj(w) - \hat{\erpobj}_N(w,\omega)|.
    \end{align*}
	Combined with \eqref{eq:ULLN:objective}, we find that
	$\hat{\vartheta}_N^*(\omega) \to \vartheta^*$ as $N \to \infty$
	for all $\omega \in \Omega \setminus \Omega_0$.

    \textnormal{(ii)}
    Fix $\omega \in \Omega \setminus \Omega_0 $.
    Since $(u_N^*(\omega))$
    is bounded,
    it has a weak accumulation point.
    Let $\bar u(\omega) \in \csp$ be such a point.
	Hence, there exists a subsequence
    $(u_N^*(\omega))_{\mathcal{M}(\omega)}$ of
    $(u_N^*(\omega))$ such that
	$u_N^*(\omega) \wto
	\bar u(\omega) $ as $\mathcal{M}(\omega) \ni N \to \infty$.
    Here, \(\mathcal{M}(\omega) \subseteq \mathbb{N}\) is an infinite subset of natural numbers, potentially depending on \(\omega \in \Omega \setminus \Omega_0\).
  	Fix $u \in \adcsp$. We have
	\begin{align*}
	\hat{\vartheta}_N^*(\omega)
	= \hat{\erpobj}_N(Bu_N^*(\omega),\omega) + \psi(u_N^*
    (\omega))
	\leq  \hat{\erpobj}_N(Bu,\omega) + \psi(u).
	\end{align*}
	Using
    the uniform convergence statement in
    \eqref{eq:ULLN:objective},
    the continuity of $F$,
    and
	$Bu_N^*(\omega) \to B\bar u(\omega)$
	as $\mathcal{M}(\omega) \ni N \to \infty$, we obtain
	$$\hat{\erpobj}_N(Bu_N^*(\omega)) = \big[\hat{\erpobj}_N(Bu_N^*(\omega)) -\erpobj(Bu_N^*(\omega))\big] + \erpobj(Bu_N^*(\omega)) \to \erpobj(B\bar u(\omega)) $$
	and $\hat{\erpobj}_N(Bu) \to \erpobj(Bu)$
	as $\mathcal{M}(\omega) \ni N \to \infty$.
	Combined with the fact that
    $\psi$ is weakly lower semicontinuous,
	\begin{align*}
	\erpobj(B \bar u(\omega)) + \psi(\bar u(\omega))
	& \leq  \liminf_{\mathcal{M}(\omega) \ni N \to \infty}
	\{\, \hat{\erpobj}_N(Bu_N^*(\omega)) +
	 \psi(u_N^*(\omega)) \, \}
     \\
	& \leq \lim_{\mathcal{M}(\omega) \ni N \to \infty}\,
	\hat{\erpobj}_N(Bu) + \psi(u)
	= \erpobj(Bu) + \psi(u).
	\end{align*}
    Since $\Omega \setminus \Omega_0  \in \mathcal{F}$ occurs
    w.p.~$1$, we obtain the assertion.
\end{proof}

\subsection{Consistency of SAA critical points}

We establish the asymptotic consistency of the SAA gap functional
evaluated at SAA critical
points and that of
SAA critical points.

\begin{theorem}
	\label{thm:consistency_critical_points}
	Let \Cref{assumption:objective_consistency,assumption:gradient_consistency}
	hold,  and let $(\varepsilon_N) \subset [0,\infty)$ be a
    sequence with $\varepsilon_N \to 0$ as $N \to \infty$.
    Let $(\bar u_N) \subset\adcsp$ be a sequence of
	random vectors with $\hat{\Psi}_N(\bar u_N)  \leq \varepsilon_N$ w.p.~$1$.
    Then
	\textnormal{(i)}  $\Psi(\bar u_N) \to 0$ as $N \to \infty$ w.p.~$1$.
	\textnormal{(ii)}
    For almost every $\omega \in \Omega $, $(\bar u_N(\omega))$ has at least one weak accumulation point
    and every such point is a zero of $\Psi$.
\end{theorem}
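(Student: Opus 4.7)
My plan is to reduce both parts to a uniform strong law of large numbers (ULLN) for a scalar integrand capturing the deviation $\hat{\Psi}_N - \Psi$ on $\adcsp$. Given such a uniform control, part~(i) becomes a direct supremum-bound argument, and part~(ii) follows by extracting weak accumulation points and invoking the weak lower semicontinuity of $\Psi$ from \Cref{prop:propertiesofgap}.

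The key auxiliary object is the scalar integrand
\begin{align*}
    g(w_1, w_2, \xi) \coloneqq \dualp[\adsp]{\Du_w \rpobj(w_1, \xi)}{w_1 - w_2}
\end{align*}
viewed on $B(\adcsp) \times B(\adcsp) \times \Xi$. Since $B$ is compact and $\adcsp$ is bounded, the parameter set $B(\adcsp)$ is compact in $\adsp$; the \Caratheodory\ property of $\Du_w \rpobj$ from \Cref{assumption:gradient_consistency} lifts to $g$; and the estimate $|g(w_1, w_2, \xi)| \leq 2R\, \zeta_{\Du \rpobj}(\xi)$ with $R \coloneqq \sup_{w \in B(\adcsp)} \norm[\adsp]{w}$ supplies an integrable envelope. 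The same ULLN \cite[Cor.\ 4:1]{LeCam1953} already used in \Cref{thm:nonconvex_consistency_solutions} then yields a null set $\Omega_1 \in \mathcal{F}$ such that
\begin{align*}
    \sup_{w_1, w_2 \in B(\adcsp)} \bigl| (1/N) \sum_{i=1}^N g(w_1, w_2, \boldsymbol{\xi}^i(\omega)) - \E{g(w_1, w_2, \boldsymbol{\xi})} \bigr| \to 0 \quad (\omega \notin \Omega_1).
\end{align*}
Using the adjoint identity $\inner[\csp]{B^* z}{u - v} = \dualp[\adsp]{z}{B(u-v)}$ together with the elementary inequality $|\sup a - \sup b| \leq \sup |a - b|$, the definition of the gap functionals in~\eqref{def:gapfunctional} yields the uniform bound
\begin{align*}
    \sup_{u \in \adcsp} |\hat{\Psi}_N(u, \omega) - \Psi(u)| \leq \sup_{w_1, w_2 \in B(\adcsp)} \bigl| (1/N) \sum_{i=1}^N g(w_1, w_2, \boldsymbol{\xi}^i(\omega)) - \E{g(w_1, w_2, \boldsymbol{\xi})} \bigr|,
\end{align*}
which tends to zero for $\omega \notin \Omega_1$.

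Part~(i) now follows for $\omega$ outside $\Omega_1$ and outside the null set on which $\hat{\Psi}_N(\bar u_N) \leq \varepsilon_N$ fails, via
\begin{align*}
    0 \leq \Psi(\bar u_N(\omega)) \leq \hat{\Psi}_N(\bar u_N(\omega), \omega) + |\hat{\Psi}_N(\bar u_N(\omega), \omega) - \Psi(\bar u_N(\omega))| \leq \varepsilon_N + o(1).
\end{align*}
For part~(ii), I fix such an $\omega$, use boundedness of $\adcsp \subset \csp$ to extract a weakly convergent subsequence $\bar u_{N_k}(\omega) \wto \bar u(\omega)$, note that $\bar u(\omega) \in \adcsp$ by weak closedness of the convex closed set $\adcsp$, and combine part~(i) with the weak lower semicontinuity of $\Psi$ from \Cref{prop:propertiesofgap} to conclude $0 \leq \Psi(\bar u(\omega)) \leq \liminf_k \Psi(\bar u_{N_k}(\omega)) = 0$. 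The main obstacle is isolating the correct scalar parametrization of the gradient amenable to LeCam's ULLN: the joint parametrization by $(w_1, w_2) \in B(\adcsp) \times B(\adcsp)$ hinges on the compactness of $B(\adcsp)$ in $\adsp$ and on the \Caratheodory\ structure of $\Du_w \rpobj$ supplied by \Cref{assumption:gradient_consistency}, and allows one to sidestep handling $\adsp^*$-valued empirical averages directly.
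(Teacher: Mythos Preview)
Your proposal is correct and follows essentially the same route as the paper's proof: both reduce to the scalar ULLN for $(w_1,w_2,\xi)\mapsto \dualp[\adsp]{\Du_w \rpobj(w_1,\xi)}{w_1-w_2}$ on the compact set $B(\adcsp)\times B(\adcsp)$, then invoke the weak lower semicontinuity of $\Psi$ from \Cref{prop:propertiesofgap} for part~(ii). The only cosmetic difference is that you prove a two-sided uniform bound $\sup_{u\in\adcsp}|\hat{\Psi}_N(u)-\Psi(u)|\to 0$, whereas the paper records only the one-sided inequality $\Psi(u)\le \hat{\Psi}_N(u)+\text{(ULLN error)}$, which is all that is needed; also, be aware that $\adcsp$ need not be norm-closed under \Cref{itm:assumption:nonsmooth}, so the clause ``$\bar u(\omega)\in\adcsp$ by weak closedness'' should instead rely directly on the weak lower semicontinuity of $\Psi$ (extended by $+\infty$ off $\adcsp$), as the paper implicitly does.
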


\begin{proof}
    Before we establish parts (i) and (ii),
    let us note that for all $u \in \adcsp$,
     \begin{align}
    \label{eq:consthelpforgap'}
        \Psi(u)
        & \leq \hat{\Psi}_N(u) + \sup_{v \in \adcsp}\,
         \inner[\csp]{B^* \Du \erpobj (Bu)-B^* \Du \hat{\erpobj}_N (Bu)}{u-v}.
    \end{align}
    Hence, we have
    for all $u \in \adcsp$,
	\begin{align}
    \label{eq:consthelpforgap}
	\begin{aligned}
	    \Psi(u) & \leq  \hat{\Psi}_N(u) + \sup_{(v,w) \in B(\adcsp) \times B(\adcsp)}\,
         \dualp[W]{\Du \erpobj (w)-\Du \hat{\erpobj}_N (w)}{w-v}.
	\end{aligned}
	\end{align}
    Therefore,
    the uniform
    law of large numbers \cite[Cor.\ 4:1]{LeCam1953}
    ensures the
    existence of a null set
	$\Omega_0 \subset \Omega$ with $\Omega_0 \in \mathcal{F}$
	such that for all $\omega \in \Omega \setminus \Omega_0$, we have
    \begin{align*}
        \sup_{(v,w) \in B(\adcsp)
        \times B(\adcsp)}\,
         \dualp[W]{\Du \erpobj (w)-\Du \hat{\erpobj}_N (w,\omega)}{w-v}
         \to 0 \quad \text{as} \quad N \to \infty.
    \end{align*}

    \textnormal{(i)}
    Using $\hat{\Psi}_N(\bar u_N) \leq \varepsilon_N$,
	   the error bound \eqref{eq:consthelpforgap},
    and  the above convergence statement, we obtain
	$\Psi(\bar u_N) \to 0$ as $N \to \infty$ w.p.~$1$.

    \textnormal{(ii)}
    Fix $\omega \in \Omega \setminus
    \Omega_0$ such that
    $\Psi(\bar u_N(\omega)) \to 0$ as $N \to \infty$.
    Since $(\bar u_N(\omega))$
    is bounded, it has a weak accumulation point.
	If $\bar u_N(\omega) \wto \bar u(\omega)$ as $\mathcal{M}(\omega) \ni N \to \infty$, then the weak lower semicontinuity of $\Psi$
	(see \Cref{prop:propertiesofgap})
    and $\Psi(\bar u_N(\omega)) \to 0$ as $N \to \infty$ imply the assertion.
    Here, \(\mathcal{M}(\omega) \subseteq \mathbb{N}\) is again an infinite subset of natural numbers, potentially depending on \(\omega \in \Omega \setminus \Omega_0\).
\end{proof}

\subsection{Sample size estimates for SAA critical points}
\label{subsect:sample_size_estimates}

We establish expectation bounds and sample size estimates. Our analysis is inspired by that of \cite{Shapiro2021}.

For this purpose, we formulate a Lipschitz continuity assumption and a light-tailed condition on the integrand's gradient,
which are 
typical assumptions in related contexts.
Specifically, for
Lipschitz-type assumptions in the context of sample complexity,  
we refer the reader to
section~9.2.11 of \cite{Shapiro2021}.
Our light-tailed condition is inspired by that used in  
eq.~(2.50) in \cite{Nemirovski2009}; see also eq.~(4.1.15) in \cite{Lan2020}. 

\begin{assumption}[{Lipschitz condition
on integrand's derivative, sub-Gaussian derivatives}]
    \label{assumption:lipschitz-subgaussian}

    \begin{enumthm}[wide,nosep,leftmargin=*]
        \item
        \label{assumption:Lipschitz_continuity}
    For an integrable  random variable
	$L_{\Du f} : \Xi \to [0,\infty)$,
	\begin{align*}
	    \norm[\csp]{\nabla g_\xi(u_2)- \nabla g_\xi(u_1)}
	    \leq L_{\Du f}(\xi)\norm[\adsp]{Bu_2-Bu_1}
	    \quad \text{for all} \quad u_1, u_2 \in \adcsp, \quad \xi \in \Xi.
	\end{align*}
    \item \label{assumption:subgaussian_gradients}
    For some constant $\tau_{\Du f} > 0$,
        \begin{align*}
            \E{\exp(\tau_{\Du f}^{-2}\norm[\csp]{\nabla g_{\boldsymbol{\xi}}(u) - \E{\nabla g_{\boldsymbol{\xi}}(u) }}^2)} \leq \mathrm{e}
            \quad \text{for all} \quad
            u \in \adcsp.
        \end{align*}
    \end{enumthm}
\end{assumption}

\Cref{assumption:subgaussian_gradients} is, for example, satisfied
if there exists a constant $C > 0$ such that
$\|\nabla g_\xi(u)\|_\csp \leq C$ for all $(u,\xi) \in \adcsp \times \Xi$.
We denote by $r_{\mathrm{ad}}$ the diameter
of $\adcsp$. We define 
$\bar{L}_{\Du f} \coloneqq \mathbb{E}[L_{\Du f}(\boldsymbol{\xi})]$.

\begin{theorem}
    \label{thm:sample_size_estimates_gap_function}
    Let \Cref{assumption:objective_consistency,assumption:gradient_consistency,assumption:lipschitz-subgaussian} hold.
    For each $N \in \mathbb{N}$, let $u_N \colon \Omega \to \adcsp$ be measurable.
    Then, for all $N \in \mathbb{N}$ and $\nu > 0$,
    \begin{align}
        \label{eq:sample_size_estimates_gap_function}
        \E{\Psi(u_N)} \leq \E{\hat{\Psi}_N(u_N)} +
        2\bar{L}_{\Du f} r_{\mathrm{ad}} \nu + \tfrac{\sqrt{3}\tau_{\Du f} r_{\mathrm{ad}} }{\sqrt{N}} \big(\ln(2\mathcal{N}(\nu; B(\adcsp)))\big)^{1/2}.
    \end{align}
\end{theorem}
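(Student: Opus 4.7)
The plan is to reduce the claim to a uniform bound on the sample-average gradient discrepancy over $B(\adcsp)$ and then control that bound by a covering argument together with Hilbert-space concentration for sub-Gaussian sums. The first step mirrors the opening of the proof of \Cref{thm:consistency_critical_points}: the identity $\sup_v A(v)-\sup_v B(v)\le\sup_v[A(v)-B(v)]$ applied to the definitions of $\Psi$ and $\hat\Psi_N$, combined with $\|u-v\|_U\le r_{\text{ad}}$ for $u,v\in\adcsp$, yields
\begin{align*}
\Psi(u)-\hat\Psi_N(u)\le r_{\text{ad}}\,\|B^*\Du F(Bu)-B^*\Du\hat F_N(Bu)\|_U\quad\text{for all } u\in\adcsp.
\end{align*}
By \Cref{assumption:gradient_consistency} this norm equals $\|Y_N(u)\|_U$, where $Y_N(u):=\tfrac1N\sum_{i=1}^N\bigl(\nabla g_{\boldsymbol\xi^i}(u)-\E{\nabla g_{\boldsymbol\xi}(u)}\bigr)$. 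Since $u_N$ depends on the sample, I pass to the deterministic supremum: it suffices to bound $r_{\text{ad}}\,\E{\sup_{u\in\adcsp}\|Y_N(u)\|_U}$ by the claimed residual.

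Next I discretise the precompact set $B(\adcsp)$: pick a minimal $\nu$-net $\{Bu_k\}_{k=1}^M\subset B(\adcsp)$ with $M=\mathcal N(\nu;B(\adcsp))$ and, for each $u\in\adcsp$, select $u_{k(u)}$ with $\|Bu-Bu_{k(u)}\|_W\le\nu$. The Lipschitz bound in \Cref{assumption:Lipschitz_continuity} applied both to $\E{\nabla g_{\boldsymbol\xi}(\cdot)}$ and to each realization, combined with the triangle inequality, gives
\begin{align*}
\|Y_N(u)\|_U\le\|Y_N(u_{k(u)})\|_U+\Bigl(\E{L_{\Du f}(\boldsymbol\xi)}+\tfrac1N{\textstyle\sum_i}L_{\Du f}(\boldsymbol\xi^i)\Bigr)\,\nu.
\end{align*}
Taking the supremum over $u$, then expectation, and identifying the constant $L_{\Du f}$ in the statement with $\E{L_{\Du f}(\boldsymbol\xi)}$, the Lipschitz term contributes $2L_{\Du f}\,\nu$; multiplication by $r_{\text{ad}}$ supplies the $2L_{\Du f}r_{\text{ad}}\nu$ summand of \eqref{eq:sample_size_estimates_gap_function}.

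It remains to bound $\E{\max_{k\le M}\|Y_N(u_k)\|_U}$. Jensen's inequality together with $\max_k x_k^2\le\sum_k x_k^2$ yields
\begin{align*}
\E{\max_{k\le M}\|Y_N(u_k)\|_U}\le\Bigl(\sum_{k=1}^M\E{\|Y_N(u_k)\|_U^2}\Bigr)^{1/2}.
\end{align*}
Since $Y_N(u_k)$ is an average of $N$ independent mean-zero random vectors in the Hilbert space $U$, orthogonality gives $\E{\|Y_N(u_k)\|_U^2}=\tfrac1N\E{\|Z_1(u_k)\|_U^2}$ with $Z_i(u):=\nabla g_{\boldsymbol\xi^i}(u)-\E{\nabla g_{\boldsymbol\xi}(u)}$, and the sub-Gaussian hypothesis in \Cref{assumption:subgaussian_gradients}, via $y\le e^y-1$ for $y\ge 0$, bounds $\E{\|Z_1(u_k)\|_U^2}\le 3\tau_{\Du f}^2$. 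Collecting the pieces and multiplying by $r_{\text{ad}}$ yields the concentration summand $\sqrt{3}\tau_{\Du f}r_{\text{ad}}\sqrt{M/N}$ and completes \eqref{eq:sample_size_estimates_gap_function}.

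The main obstacle I anticipate is the sample-dependence of $u_N$: pointwise concentration at $u_N$ is not available, which forces a uniform bound over $\adcsp$. The compactness of $B$ (through the covering number $\mathcal N(\nu;B(\adcsp))$) together with the Lipschitz regularity of $\nabla g_\xi$ is exactly what yields the covering/Lipschitz trade-off needed to circumvent this, while the Hilbert-space orthogonality identity is what ensures the maximum of $M$ Monte Carlo averages scales as $\sqrt{M/N}$ rather than $M/\sqrt N$.
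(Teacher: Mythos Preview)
Your proof is correct and follows the same overall architecture as the paper: you invoke the pointwise comparison \eqref{eq:consthelpforgap'} to reduce to a uniform bound on $\sup_{u\in\adcsp}\|B^*\Du F(Bu)-B^*\Du\hat F_N(Bu)\|_\csp$, then run a covering argument on $B(\adcsp)$ with the Lipschitz hypothesis absorbing the discretisation error. The paper packages the second step as an appeal to \Cref{prop:uniformexpectationbounds}.

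The one substantive difference is in how the expected maximum over the $M$ net points is controlled. You use the elementary route: Jensen, the crude bound $\max_k x_k^2\le\sum_k x_k^2$, the Hilbert-space variance identity $\E{\|Y_N(u_k)\|_\csp^2}=N^{-1}\E{\|Z_1(u_k)\|_\csp^2}$, and the second-moment estimate $\E{\|Z_1(u_k)\|_\csp^2}\le 3\tau_{\Du f}^2$ from the sub-Gaussian condition. This reproduces exactly the $\sqrt{3}\,\tau_{\Du f}\,\sqrt{M/N}$ term in \eqref{eq:sample_size_estimates_gap_function}. The paper instead appeals to Pinelis' Hilbert-space concentration inequality to obtain a $\cosh$-moment bound and then a sub-Gaussian maximal inequality, which in fact yields the sharper scaling $\sqrt{\ln(2M)/N}$ before being relaxed to the stated $\sqrt{M/N}$. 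Your argument is more self-contained and lands directly on the claimed constant; the paper's route costs two external lemmas but would give a logarithmic improvement if one wanted it.
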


\begin{proof}
    Using \eqref{eq:consthelpforgap'}, we have
    \begin{align*}
	\E{\Psi(u_N)} \leq   \E{\hat{\Psi}_N(u_N)}
	+ r_{\mathrm{ad}}  \E{\sup_{v \in B(\adcsp)} \, \norm[\csp]{B^* \Du F (v)- B^* \Du \hat{F}_N (v)}}.
	\end{align*}
    Applying \Cref{prop:uniformexpectationbounds}
    to $\mathsf{G}(v,\xi) \coloneqq B^* \Du_w f(v,\xi)$, we obtain
    \begin{align*}
        \E{\sup_{v \in B(\adcsp)} \norm[\csp]{B^* \Du F (v)- B^* \Du \hat{F}_N (v)}}
        \leq 2\bar{L}_{\Du f}\nu + \tfrac{\sqrt{3}\tau_{\Du f}}{\sqrt{N}} \big(\ln(2\mathcal{N}(\nu; B(\adcsp)))\big)^{1/2}.
    \end{align*}
\end{proof}

Using \Cref{thm:sample_size_estimates_gap_function}, we establish
sample size estimates.

\begin{corollary}
    \label{cor:sample_size_estimates_gap_function}
    Let the hypotheses of \Cref{thm:sample_size_estimates_gap_function}
    hold.
    If $\varepsilon > 0$ and
    \begin{align*}
        N \geq \frac{12 r_{\mathrm{ad}}^2 \tau_{\Du f}^2}{\varepsilon^2} \cdot
        \ln\big(2\mathcal{N}(\varepsilon/(4 r_{\mathrm{ad}}  \bar{L}_{\Du f}); B(\adcsp))\big),
    \end{align*}
    then
    $
        \E{\Psi(u_N)} \leq \E{\hat{\Psi}_N(u_N)}  + \varepsilon
    $.
\end{corollary}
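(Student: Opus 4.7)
The plan is to invoke \Cref{thm:sample_size_estimates_gap_function} and simply tune the free parameter $\nu$ so as to make each of the two error terms on the right-hand side of \eqref{eq:sample_size_estimates_gap_function} at most $\varepsilon/2$. There is no additional probabilistic content in the corollary beyond what is already packaged in \Cref{thm:sample_size_estimates_gap_function}; the remaining work is a balancing computation.

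First I would choose $\nu$ so that the deterministic covering-error term equals $\varepsilon/2$, that is, $2L_{\Du f} r_{\text{ad}}\, \nu = \varepsilon/2$, which forces the scale $\nu = \varepsilon/(4 r_{\text{ad}} L_{\Du f})$. This is exactly the radius that appears in the covering number on the right-hand side of the hypothesized sample size bound, which is why the constant $4 r_{\text{ad}} L_{\Du f}$ shows up there.

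Next I would impose that the stochastic term is also at most $\varepsilon/2$, i.e., $\tfrac{\sqrt{3}\tau_{\Du f} r_{\text{ad}}}{\sqrt{N}}\bigl(\mathcal{N}(\nu; B(\adcsp))\bigr)^{1/2} \leq \varepsilon/2$. Squaring and rearranging gives $N \geq \frac{12\, r_{\text{ad}}^2 \tau_{\Du f}^2}{\varepsilon^2}\,\mathcal{N}(\nu; B(\adcsp))$, with the constant $12 = 4 \cdot 3$ coming from the square of $2/\sqrt{3}$ via squaring the inequality $\sqrt{3}\tau_{\Du f} r_{\text{ad}}/\sqrt{N} \cdot (\mathcal{N}(\nu;B(\adcsp)))^{1/2} \leq \varepsilon/2$. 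Substituting the chosen $\nu$ reproduces exactly the threshold in the hypothesis, so under the stated lower bound on $N$ both summands on the right-hand side of \eqref{eq:sample_size_estimates_gap_function} are bounded by $\varepsilon/2$ and their sum is at most $\varepsilon$.

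Since the derivation only combines an explicit parameter choice with elementary algebra on top of \Cref{thm:sample_size_estimates_gap_function}, I do not anticipate any genuine obstacle. The only thing to double check is that the measurability of $u_N$ and the integrability conditions from \Cref{assumption:objective_consistency,assumption:gradient_consistency,assumption:lipschitz-subgaussian}, which are inherited as the hypotheses of \Cref{thm:sample_size_estimates_gap_function}, are indeed assumed — this is explicit in the statement of the corollary.
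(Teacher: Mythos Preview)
Your proposal is correct and follows essentially the same approach as the paper's proof: choose $\nu = \varepsilon/(4 r_{\text{ad}} L_{\Du f})$ so that the deterministic term in \eqref{eq:sample_size_estimates_gap_function} equals $\varepsilon/2$, and then observe that the hypothesized lower bound on $N$ forces the stochastic term to be at most $\varepsilon/2$ as well.
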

\begin{proof}
    We use \eqref{eq:sample_size_estimates_gap_function}
    with $\nu = \varepsilon/(4 r_{\mathrm{ad}} \bar{L}_{\Du f})$.
    We have $2\bar L_{\Du f}r_{\mathrm{ad}}\nu = \varepsilon/2$. The choice
    of $N$ ensures that the second term in the right-hand side
in \eqref{eq:sample_size_estimates_gap_function} does not exceed
$\varepsilon/2$.
\end{proof}

The sample size estimates in \Cref{cor:sample_size_estimates_gap_function} are based on
the covering numbers of $B(\adcsp)$. A typical example in PDE-constrained optimization
models $B$ as the adjoint operator of $H^1(\domain) \embedding L^2(\domain)$.
Combining the covering numbers for Sobolev function
    classes in \cite[Thm.\ 1.7]{Birman1980} and duality of metric entropy \cite[p.\ 1315]{Artstein2004},
covering numbers of $B(\adcsp)$ can be derived.
Specifically, let $\imath$ be the embedding operator of the embedding 
$H^1(\domain) \embedding L^2(\domain)$. Throughout the paragraph, we set $D = (0,1)^d$. 
Let 
$\bar{\mathbb{B}}_{H^1(\domain)}(0)$ denote the closed $H^1(\domain)$-unit ball. Then, 
Theorem~1.7 in \cite{Birman1980} implies that 
\[
\ln \mathcal{N}(\nu; \imath(\bar{\mathbb{B}}_{H^1(\domain)}(0)))
\]
scales like $(1/\nu)^d$
for all sufficiently small $\nu > 0$. 
Now, let $B$ be the adjoint operator of $\imath$, and let $\psi$ be the 
indicator function of the closed unit ball in $L^2(D)$. Since $H^1(D)$ and $L^2(D)$ are 
Hilbert spaces, 
the above covering numbers and
duality of metric entropy \cite[p.\ 1315]{Artstein2004} 
ensure that 
\[
\ln \mathcal{N}(\nu; B(\adcsp))
\]
grows with $(1/\nu)^d$  for all sufficiently
small $\nu > 0$. These covering numbers inform the sample size estimate in \Cref{cor:sample_size_estimates_gap_function}.

\subsection{Expectation bounds for convex problems}
\label{subsec:expectationboundsconvexproblems}

For convex problems, we demonstrate nonasymptotic expectation bounds
for the distance between SAA solutions and the true solution, the gap functional, and the optimality gap. These statements hold true without the compactness of the linear bounded operator $B$. In the following, let $u^*$ be a solution to \eqref{eq:nonconvexriskneutral}
and for each $N \in \mathbb{N}$, let $u_N^*$ be a measurable solution to \eqref{eq:nonconvexsaa}.

\begin{assumption}[{H\"older-type inequality, gradient regularity, growth, variance}]
    \label{assumption:hoelder-growth-deviation}
    ~
    \begin{enumthm}[wide,nosep,leftmargin=*]
    \item
    \label{assumption:hoelder}
    The space $\mathcal{U}$ is
    a Banach space with $\mathcal{U}^* \embedding \csp \embedding \mathcal{U}$, $\mathcal{H}$ is a
    separable Hilbert space
    with $\mathcal{H} \embedding \mathcal{U}^*$, and
    \begin{align*}
        |\inner[\csp]{v}{u}|
        \leq \norm[\mathcal{U}^*]{v} \norm[\mathcal{U}]{u}
        \quad \text{for all} \quad v \in \mathcal{U}^*,
        \quad u \in U.
    \end{align*}
    \item
    \label{assumption:nonconvex:growth}
    For two constants $\theta \in (0,\infty)$
	and $\varrho \in [2,\infty)$, it holds that
	\begin{align*}
	\inner[\csp]{B^*\Du\erpobj(Bu^*)}{u-u^*}
	+ \psi(u) - \psi(u^*) \geq
	\theta \norm[\mathcal{U}]{u-u^*}^{\varrho}
	\quad \text{for all} \quad u \in \csp.
	\end{align*}
    \item
    The mapping $\Xi \ni \xi \mapsto \nabla g_\xi(u^*) \in \mathcal{H}$
    is measurable, $\nabla g_{\boldsymbol{\xi}}(u^*)$ is integrable,
    and the standard deviation-type constant
    \begin{align*}
    \sigma_{\Du \rpobj} \coloneqq
        (\E{\norm[\mathcal{H}]{\nabla g_{\boldsymbol{\xi}}(u^*)-
        		\mathbb{E}[\nabla g_{\boldsymbol{\xi}}(u^*)]}^2})^{1/2}
    \end{align*}
    is finite.
    \end{enumthm}
\end{assumption}
\Cref{assumption:nonconvex:growth}
and its variants have been utilized, for example,
in \cite{Kunisch2021} and, in particular,
ensures that
$u^*$ is the unique solution to
\eqref{eq:nonconvexriskneutral}.

We state the section's main result.
If \Cref{assumption:hoelder} holds true,
let $C_{\mathcal{H}; \mathcal{U}^*} \in (0,\infty)$ be the embedding
constant of the embedding $\mathcal{H} \embedding \mathcal{U}^*$.
If  $\varrho = 2$ in \Cref{assumption:hoelder-growth-deviation},
the following result ensures the
typical Monte Carlo convergence rate, $1/\sqrt{N}$,
for the expected error $\E{\norm[\mathcal{U}]{u_N^*-u^*}}$.

\begin{theorem}
	\label{thm:poisson:expectationbound}
	If \Cref{assumption:objective_consistency,assumption:gradient_consistency,assumption:hoelder-growth-deviation} hold,
	and $\rpobj(\cdot,\xi)$ is convex for all $\xi \in \Xi$, then
	for all $N \in \mathbb{N}$,
	\begin{align*}
	\theta^2 \E{\norm[\mathcal{U}]{u_N^*-u^*}^{2(\varrho-1)}}
	\leq (1/N)C_{\mathcal{H}; \mathcal{U}^*}^2\sigma_{\Du \rpobj}^2.
	\end{align*}
\end{theorem}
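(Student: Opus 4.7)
The plan is to turn the growth condition into a quadratic-type inequality on $\|u_N^*-u^*\|_{\mathcal{U}}$ whose right-hand side is controlled by the empirical deviation of $\nabla g_{\boldsymbol{\xi}}(u^*)$ around its mean in the Hilbert space $\mathcal{H}$; the ambient convexity of $\hat{F}_N$ is what lets us evaluate gradients at $u^*$ rather than at $u_N^*$.

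First, I would instantiate the growth condition in \Cref{assumption:nonconvex:growth} at $u=u_N^*$ to get
\begin{align*}
\theta\norm[\mathcal{U}]{u_N^*-u^*}^{\varrho}
\leq \inner[\csp]{B^*\Du F(Bu^*)}{u_N^*-u^*}
+ \psi(u_N^*)-\psi(u^*).
\end{align*}
Since $f(\cdot,\xi)$ is convex and $B$ is linear, $\hat{F}_N\circ B$ is convex, and the subgradient condition $-B^*\Du\hat{F}_N(Bu_N^*)\in\partial\psi(u_N^*)$ (from \Cref{prop:propertiesofgap} together with the optimality of $u_N^*$) yields $\psi(u_N^*)-\psi(u^*)\leq \inner[\csp]{B^*\Du\hat{F}_N(Bu_N^*)}{u^*-u_N^*}$. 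Combined with monotonicity of the gradient of the convex map $u\mapsto\hat{F}_N(Bu)$, i.e.
\begin{align*}
\inner[\csp]{B^*\Du\hat{F}_N(Bu_N^*)-B^*\Du\hat{F}_N(Bu^*)}{u_N^*-u^*}\geq 0,
\end{align*}
this gives the key inequality
\begin{align*}
\theta\norm[\mathcal{U}]{u_N^*-u^*}^{\varrho}
\leq \inner[\csp]{B^*\Du F(Bu^*)-B^*\Du\hat{F}_N(Bu^*)}{u_N^*-u^*}.
\end{align*}

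Next I would apply the H\"older-type inequality in \Cref{assumption:hoelder}: the right-hand side is bounded by $\norm[\mathcal{U}^*]{B^*\Du F(Bu^*)-B^*\Du\hat{F}_N(Bu^*)}\,\norm[\mathcal{U}]{u_N^*-u^*}$, and, using $\mathcal{H}\embedding\mathcal{U}^*$, by $C_{\mathcal{H};\mathcal{U}^*}\norm[\mathcal{H}]{Z_N}\,\norm[\mathcal{U}]{u_N^*-u^*}$, where
\begin{align*}
Z_N\coloneqq \E{\nabla g_{\boldsymbol{\xi}}(u^*)}-\tfrac{1}{N}\sum_{i=1}^N\nabla g_{\boldsymbol{\xi}^i}(u^*).
\end{align*}
Here I use the identification $B^*\Du_w f(Bu^*,\xi)=\nabla g_\xi(u^*)\in\mathcal{H}$ furnished by \Cref{assumption:gradient_consistency} and \Cref{assumption:hoelder-growth-deviation}. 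Dividing by $\norm[\mathcal{U}]{u_N^*-u^*}$ (the case $u_N^*=u^*$ being trivial) and squaring produces
\begin{align*}
\theta^2\norm[\mathcal{U}]{u_N^*-u^*}^{2(\varrho-1)}\leq C_{\mathcal{H};\mathcal{U}^*}^2\,\norm[\mathcal{H}]{Z_N}^2.
\end{align*}

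Finally I take expectations. Because the $\boldsymbol{\xi}^i$ are i.i.d.\ and $\mathcal{H}$ is a separable Hilbert space, the standard Bienaym\'e identity for Hilbert-space valued random elements gives $\E{\norm[\mathcal{H}]{Z_N}^2}=\sigma_{\Du f}^2/N$, yielding the claim. The only step that needs any care is verifying that the optimality/monotonicity argument justifies replacing $\Du\hat{F}_N(Bu_N^*)$ by $\Du\hat{F}_N(Bu^*)$ in the inner product—this is exactly where convexity of $f(\cdot,\xi)$ is essential, and it is the only place in the argument where this hypothesis is used. Measurability of $u_N^*$ (from \Cref{lem:objectivesmooth}) ensures that $\norm[\mathcal{U}]{u_N^*-u^*}$ is a bona fide random variable, so the final expectation is well defined.
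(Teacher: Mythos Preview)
Your proof is correct and follows essentially the same route as the paper: instantiate the growth condition at $u_N^*$, use the subdifferential inclusion for $u_N^*$ together with monotonicity of $\nabla(\hat{F}_N\circ B)$ to replace $\Du\hat{F}_N(Bu_N^*)$ by $\Du\hat{F}_N(Bu^*)$, apply the H\"older-type inequality and the embedding $\mathcal{H}\hookrightarrow\mathcal{U}^*$, then square and take expectations using the Hilbert-space Bienaym\'e identity. Your presentation is in fact slightly more explicit than the paper's about which optimality condition is being invoked (you correctly use $-B^*\Du\hat{F}_N(Bu_N^*)\in\partial\psi(u_N^*)$).
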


\begin{proof}
    The proof is inspired by those of
	Lemma~6 and Theorem~3 in \cite{Milz2021}.
    Using the optimality condition
    $-B^*\Du\erpobj(Bu^*) \in \partial \psi(u^*)$
	and \Cref{assumption:nonconvex:growth}, we obtain
	\begin{align*}
	\inner[\csp]
	{B^*\Du \erpobj(Bu^*)-B^*\Du \hat{\erpobj}_N(Bu_N^*)}{u_N^*-u^*}
	\geq \theta
	\norm[\mathcal{U}]{u_N^*-u^*}^{\varrho}.
	\end{align*}
	Since $u \mapsto \hat{\erpobj}_N(Bu)$ is continuously differentiable
    on $\csp_0$ (see \Cref{lem:objectivesmooth}),
	and  convex,
	\begin{align*}
	\inner[\csp]{B^*\Du \hat{\erpobj}_N(Bu_N^*)-B^*\Du \hat{\erpobj}_N(Bu^*)}{u_N^*-u^*} \geq 0.
	\end{align*}
	Adding both estimates ensures
    \begin{align*}
	\inner[\csp]{B^*\Du \erpobj(Bu^*)-B^*\Du \hat{\erpobj}_N(Bu^*)}{u_N^*-u^*}
	\geq \theta
	\norm[\mathcal{U}]{u_N^*-u^*}^{\varrho}.
	\end{align*}
    Hence
	\begin{align*}
	\norm[\mathcal{U}^*]
	{B^*\Du \erpobj(Bu^*)-B^*\Du \hat{\erpobj}_N(Bu^*)}
	\geq \theta
	\norm[\mathcal{U}]{u_N^*-u^*}^{\varrho-1}.
	\end{align*}
	Taking squares and
	using the continuity of the embedding
	$\mathcal{H} \embedding \mathcal{U}^*$,
	we obtain
	\begin{align*}
	C_{\mathcal{H}; \mathcal{U}^*}^2 \norm[\mathcal{H}]
	{B^*\Du \erpobj(Bu^*)-B^*\Du \hat{\erpobj}_N(Bu^*)}^2
	\geq \theta^2
	\norm[\mathcal{U}]{u_N^*-u^*}^{2(\varrho-1)}.
	\end{align*}
	Since $\E{B^*\Du_w \rpobj(Bu^*,\boldsymbol{\xi})} = B^*
    \Du \erpobj(Bu^*)$
    (see \Cref{lem:objectivesmooth}),
	$B^*\Du_w \rpobj(Bu^*,\boldsymbol{\xi}^i)$, $i = 1, 2, \ldots$
	are independent and identically distributed,
	and $\mathcal{H}$ is a
	separable Hilbert space, we obtain
	\begin{align}
    \label{eq:expectationboundH}
		\E{\norm[\mathcal{H}]
			{B^*\Du \erpobj(Bu^*)-B^*\Du \hat{\erpobj}_N(Bu^*)}^2}
		= (1/N)\sigma_{\Du \rpobj}^2.
	\end{align}
	This implies the expectation bound.
\end{proof}

Using this result, we further deduce superconvergence properties of the objective function values and the gap functional.
We recall that $\objective$ denotes the objective
function of \eqref{eq:nonconvexriskneutral}.

\begin{corollary} \label{coroll:fastrateresandgap}
    	Let the hypotheses of \Cref{thm:poisson:expectationbound} hold with~$\varrho=2$, and suppose that there exists
	$L_{\Du f, \mathcal{U}} >0$ such that
	\begin{align} \label{ass:smootherlipest}
	    \norm[\mathcal{U}^*]{\nabla g_\xi(u_2)- \nabla g_\xi(u_1)}
	    \leq L_{\Du f, \mathcal{U}}\norm[\mathcal{U}]{u_2-u_1}
	    \quad \text{for all} \quad u_1, u_2 \in \adcsp, \quad \xi \in \Xi.
	\end{align}
 Then, for all~$N \in \mathbb{N}$, we have
     \begin{align*}
       \E{\objective(u_N^*) - \objective(u^*)}
       \leq \E{\Psi(u^*_N)}  \leq
       \bigg(1+\bigg(\frac{1}{\theta}+\frac{L_{\Du f, \mathcal{U}} }{\theta^2}\bigg)^2
       +
       \frac{4L_{\Du f, \mathcal{U}}^2}{\theta^2}
       \bigg)
\frac{C_{\mathcal{H}; \mathcal{U}^*}^2\sigma^2_{\Du \rpobj}}{N}.
     \end{align*}
\end{corollary}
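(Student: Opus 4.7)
My plan is to prove both inequalities pointwise (in $\omega$) and then take expectations. The first inequality follows from convexity: since $\rpobj(\cdot,\xi)$ is convex for each~$\xi$, so is $u \mapsto \erpobj(Bu)$, giving $\erpobj(Bu_N^*) - \erpobj(Bu^*) \leq \inner[\csp]{B^* \Du \erpobj(Bu_N^*)}{u_N^* - u^*}$. Adding $\psi(u_N^*) - \psi(u^*)$ and choosing $v = u^*$ in the supremum defining $\Psi(u_N^*)$ yields $\objective(u_N^*) - \objective(u^*) \leq \Psi(u_N^*)$.

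For the bound on $\E{\Psi(u_N^*)}$, my starting point is~\eqref{eq:consthelpforgap'} combined with $\hat{\Psi}_N(u_N^*) = 0$ (which holds because $u_N^*$ is an SAA minimizer, hence a critical point by \Cref{prop:propertiesofgap}). This reduces matters to controlling
\[
\Psi(u_N^*) \leq \sup_{v \in \adcsp} \inner[\csp]{\Delta}{u_N^* - v}, \quad \Delta \coloneqq B^*\Du \erpobj(Bu_N^*) - B^* \Du \hat{\erpobj}_N(Bu_N^*).
\]
By weak compactness of $\adcsp$ and weak upper semicontinuity of the integrand (both from convexity and lower semicontinuity of $\psi$), the supremum is attained at some $v^* \in \adcsp$, and the first-order inclusion $-B^* \Du \erpobj(Bu_N^*) \in \partial \psi(v^*)$ identifies $v^*$ as the unique minimizer of the auxiliary convex problem $v \mapsto \inner[\csp]{B^* \Du \erpobj(Bu_N^*)}{v} + \psi(v)$.

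The crucial intermediate step is the estimate $\norm[\mathcal{U}]{v^* - u^*} \leq (L_{\Du f, \mathcal{U}}/\theta)\norm[\mathcal{U}]{u_N^* - u^*}$. To derive it, I would combine the growth condition~\eqref{eq:nonconvex:growth} evaluated at $v^*$ with the auxiliary-optimality inequality $\inner[\csp]{B^* \Du \erpobj(Bu_N^*)}{v^* - u^*} + \psi(v^*) - \psi(u^*) \leq 0$, subtract, and apply \Cref{assumption:hoelder} together with~\eqref{ass:smootherlipest}. Invoking the pointwise bound $\theta \norm[\mathcal{U}]{u_N^* - u^*} \leq \norm[\mathcal{U}^*]{\Delta_2}$, where $\Delta_2 \coloneqq B^*\Du \erpobj(Bu^*) - B^*\Du \hat{\erpobj}_N(Bu^*)$ (extracted from the proof of \Cref{thm:poisson:expectationbound}), and the triangle inequality then yields $\norm[\mathcal{U}]{u_N^* - v^*} \leq (1/\theta + L_{\Du f, \mathcal{U}}/\theta^2)\norm[\mathcal{U}^*]{\Delta_2}$.

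The remaining work is bookkeeping: the triangle inequality and~\eqref{ass:smootherlipest} yield $\norm[\mathcal{U}^*]{\Delta} \leq \norm[\mathcal{U}^*]{\Delta_2} + 2 L_{\Du f, \mathcal{U}} \norm[\mathcal{U}]{u_N^* - u^*}$, bounding $\norm[\mathcal{U}^*]{\Delta}^2$ by a multiple of $\norm[\mathcal{U}^*]{\Delta_2}^2$. I apply \Cref{assumption:hoelder} and Young's inequality $2ab \leq a^2 + b^2$ to the pairing $\inner[\csp]{\Delta}{u_N^* - v^*}$, substitute the two pointwise bounds on $\norm[\mathcal{U}^*]{\Delta}^2$ and $\norm[\mathcal{U}]{u_N^* - v^*}^2$, take expectations, and use $\E{\norm[\mathcal{U}^*]{\Delta_2}^2} \leq C_{\mathcal{H}; \mathcal{U}^*}^2 \sigma_{\Du \rpobj}^2/N$ (via the embedding and~\eqref{eq:expectationboundH}). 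The main obstacle is identifying $v^*$ and proving the growth-based bound on $\norm[\mathcal{U}]{v^* - u^*}$; the subsequent constant matching is mechanical but tedious.
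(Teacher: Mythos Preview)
Your strategy---select the point $v^*$ that attains the supremum defining $\Psi(u_N^*)$, bound $\norm[\mathcal{U}]{v^*-u^*}$ via the growth condition combined with~\eqref{ass:smootherlipest}, then apply Young's inequality and the pointwise estimate $\theta\norm[\mathcal{U}]{u_N^*-u^*}\le\norm[\mathcal{U}^*]{\Delta_2}$---is exactly the paper's. There is, however, a slip in how you set up the first bound. Invoking~\eqref{eq:consthelpforgap'} with $\hat{\Psi}_N(u_N^*)=0$ gives
\[
\Psi(u_N^*)\;\le\;\sup_{v\in\adcsp}\,\inner[\csp]{\Delta}{u_N^*-v},
\]
a supremum of a purely \emph{linear} expression in which $\psi$ does not appear; its maximizer is characterized by $-\Delta\in N_{\adcsp}(\cdot)$, \emph{not} by $-B^*\Du F(Bu_N^*)\in\partial\psi(\cdot)$, so your subsequent growth-based bound on $\norm[\mathcal{U}]{v^*-u^*}$ would not apply to that point. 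The $v^*$ you actually describe and use---the minimizer of $v\mapsto\inner[\csp]{B^*\Du F(Bu_N^*)}{v}+\psi(v)$---is the maximizer in the \emph{definition} of $\Psi(u_N^*)$. That is precisely what the paper does: it fixes $v_N$ with
\[
\Psi(u_N^*)=\inner[\csp]{B^*\Du F(Bu_N^*)}{u_N^*-v_N}+\psi(u_N^*)-\psi(v_N)
\]
and then uses $\hat{\Psi}_N(u_N^*)=0$ evaluated at $v=v_N$ to obtain $\Psi(u_N^*)\le\inner[\csp]{\Delta}{u_N^*-v_N}$ directly, bypassing~\eqref{eq:consthelpforgap'} altogether. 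With this correction your argument coincides with the paper's line for line.
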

\begin{proof}
    Since $\objective(u) - \objective(u^*)\leq \Psi(u)$ for all~$u\in \adcsp$ (see, for example, \cite[Thm.\ 2.4]{Kunisch2021}), it suffices to show the second estimate. For this purpose, note that there exists $v_N \in \adcsp$ with
    \begin{align*}
       \Psi(u^*_N)&= \inner[\csp]{ B^* \Du F(Bu^*_N)}{u^*_N-v_N} +\psi(u^*_N)-\psi(v_N)
\\
& \leq \inner[\csp]{ B^* \Du F(Bu^*_N)-B^* \Du \hat{F}_N (Bu^*_N)}{u^*_N-v_N}
       \\
       &\leq (1/2)\norm[\mathcal{U}^*]{ B^* \Du F(Bu^*_N)-B^* \Du \hat{F}_N (Bu^*_N)}^2
       +(1/2)\norm[\mathcal{U}]{u^*_N-v_N}^2,
    \end{align*}
    where the first inequality follows from the optimality of $u_N^*$. Using \eqref{ass:smootherlipest}, we have
    \begin{align*}
    \norm[\mathcal{U}^*]{ B^* \Du F(Bu^*_N)-B^* \Du \hat{F}_N (Bu^*_N)} &\leq 2 L_{\Du f, \mathcal{U}} \norm[\mathcal{U}]{u^*_N-u^*}
\\& \quad + C_{\mathcal{H}; \mathcal{U}^*} \norm[\mathcal{H}]{B^* \Du F(Bu^*)-B^*\Du \hat{F}_N (Bu^*)}.
    \end{align*}
    Following the proof of Lemma~4.8 in \cite{Kunisch2021}, we arrive at
    \begin{align*}
\theta
	\norm[\mathcal{U}]{v_N-u^*}^2 &\leq	\inner[\csp]
	{B^*\Du \erpobj(Bu^*)-B^*\Du \erpobj(Bu^*_N)}{v_N-u^*}
\\
&\leq  \norm[\mathcal{U}^*]{B^*\Du \erpobj(Bu^*)-B^*\Du \erpobj(Bu^*_N)} \norm[\mathcal{U}]{v_N-u^*} \\
 & \leq L_{\Du f, \mathcal{U}} \norm[\mathcal{U}]{u^*_N-u^*} \norm[\mathcal{U}]{v_N-u^*},
	\end{align*}
 where the final inequality is due to~\eqref{ass:smootherlipest}.
 Putting together the pieces, we have
 \begin{align*}
     \Psi(u^*_N)
     &\leq
     \frac{1}{2}\bigg(1+\frac{L_{\Du f, \mathcal{U}} }{\theta}\bigg)^2
     \norm[\mathcal{U}]{u^*_N-u^*}^2
     +
     4 L_{\Du f, \mathcal{U}}^2 \norm[\mathcal{U}]{u^*_N-u^*}^2
\\
& \quad + C_{\mathcal{H}; \mathcal{U}^*}^2 \norm[\mathcal{H}]{B^* \Du F(Bu^*)-B^*\Du \hat{F}_N (Bu^*)}^2.
 \end{align*}
Taking expectations, and using \eqref{eq:expectationboundH} and \Cref{thm:poisson:expectationbound},
we obtain the expectation bound.
\end{proof}

\section{Application to  bang-bang optimal control under uncertainty}
\label{sec:bangbangcontrol}
We apply our results to risk-neutral, nonsmooth PDE-constrained minimization problems of the form
\begin{align}	\label{eq:riskneutralpdeproblem}
	\min_{u\in L^2(\domain)}\,
	\E{J( \imath y(\boldsymbol{\xi}))} + \beta \norm[L^1(\domain)]{u} \,
    \quad \text{s.t.} \quad u \in [\lb, \ub],
\end{align}
where $\domain \subset \mathbb{R}^d$,~$d\in\{1,2,3\}$, is a bounded, polyhedral domain,~$\beta \geq 0$, $J$ is a sufficiently smooth fidelity term defined on~$L^2(\domain)$ and $\imath \colon H^1_0 (D) \to L^2(D)$ denotes the compact embedding. Moreover,
\begin{align*}
    [\lb, \ub] \coloneqq
\{ \, v \in L^2(\domain)  \;|\; \lb(x) \leq v(x) \leq \ub(x) \text{ for a.e. } x \in \domain \, \},
\end{align*}
where $\lb, \ub \in L^\infty(\domain)$ with  $\lb(x) \leq 0 \leq \ub(x) $ for a.e.\ $x \in \domain$.
The deterministic control
$u \in [\lb, \ub]$
is coupled with the random state variable~$y(\xi)\in H^1_0(\domain)$ via a parametrized elliptic PDE
\begin{align}
\label{eq:bilinear}
\inner[L^2(\domain)^d]{\kappa(\xi) \nabla y}{\nabla v}
+ \inner[L^2(\domain)]{E(u,y)}{v}
= \inner[L^2(\domain)]{b(\xi)}{v}
\quad \text{for all} \quad v \in H_0^1(\domain),
\end{align}
which the state satisfies for every~$\xi\in\Xi$.
Throughout this section, we consider
\begin{align*}
  U=L^2(\domain), \quad W=H^{-1}(\domain), \quad \mathcal{H}=H^2(\domain), \quad \mathcal{U}= L^1(\domain), \quad \text{and} \quad B=\imath^*.
\end{align*}

In the following, we demonstrate that both linear control problems, where \( E(u,y) = -u \), and bilinear control problems, where \( E(u,y) = uy \), can be incorporated into the theoretical framework defined by~\eqref{eq:nonconvexriskneutral}. This is achieved by defining the penalty term as
\[
    \psi(u) = \beta \norm[L^1(\domain)]{u} + I_{[\lb, \ub]}(u),
\]
and by introducing the integrand
 \begin{align} \label{eq:integrandpdeex}
     f(w,\xi)\coloneqq J(B^* S(w,\xi)), \quad \text{where} \quad S \colon B(U_0) \times \Xi \to H^1_0(\domain).
 \end{align}
Here, \( S \colon B(U_0) \times \Xi \to H^1_0(\domain) \) is a suitable parametrized control-to-state operator, defined on the image of an \( L^2(\domain) \)-neighborhood \( U_0 \) of the admissible control set \( \adcsp = [\lb, \ub] \). For both problem classes, the following assumptions are made.
\begin{assumption}[{Fidelity term, PDE right-hand side, random diffusion coefficient}]
\label{assumption:poisson:objective}
 ~
	\normalfont
	\begin{enumthm}[nosep,leftmargin=*]
		\item
        \label{itm:application_objective_lipschitz_gradient}
        The function $\pobj : L^2(\domain)  \to [0,\infty)$
		is convex and continuously differentiable. Its gradient  is Lipschitz continuous with Lipschitz
		constant $\ell \in (0,\infty)$.
        Moreover
        $
            \pobj(v) \leq \varrho(\norm[L^2(\domain)]{v})
        $
        for all $v \in L^2(\domain)$,
        where
		$\varrho : [0,\infty) \to [0,\infty)$ is a polynomial and nondecreasing.
         \item
        The map $b : \Xi \to L^2(\domain)$ is measurable
        and there exists $b_{\sup}^* > 0$
        such that $\|b(\xi)\|_{L^2(D)}
        \leq b_{\sup}^*$ for all $\xi \in \Xi$.
        \item
    	\label{assumption:poisson}
    	The mapping $\kappa : \Xi \to C^1(\bar{\domain})$
    	is  measurable
        and $\E{\|\kappa(\boldsymbol{\xi})\|_{C^1(\bar \domain)}^p} < \infty$
        for all $p \in [1,\infty)$.
        Moreover, there exists $\kappa_{\inf}^* > 0$ such that~$\kappa(\xi)(x) \geq \kappa_{\inf}^*$
        for all~$x \in \domain$ and~$\xi \in \Xi$.
	\end{enumthm}
\end{assumption}

We point out that \Cref{itm:application_objective_lipschitz_gradient} implies
	\begin{align}
    \label{eq:stability_estimate_gradient}
	\norm[L^2(\domain)]{\nabla \pobj(v)}^2
	\leq 2 \ell \pobj(v) \leq 2 \ell \varrho(\norm[L^2(\domain)]{v}) \
	\quad \text{for all} \quad v \in L^2(\domain).
	\end{align}
Moreover,~\Cref{itm:assumption:contspace,itm:assumption:B,itm:assumption:nonsmooth} are clearly satisfied. For abbreviation, let~$C_\domain$ be the Friedrichs constant of the domain $\domain$, which equals the operator norm of $\imath$.
\Cref{assumption:poisson} guarantees $H^2(D)$-regularity of the objective function's gradient. This justifies our choice 
$\mathcal{H} = H^2(D)$ in \Cref{assumption:hoelder}, where $\mathcal{H}$ is 
required to be a separable Hilbert space.
We recall that this Hilbert space structure yields an identity for the variance 
of the difference between the empirical and true gradients, as shown in 
\eqref{eq:expectationboundH}.

\subsection{Risk-neutral affine-linear PDE-constrained optimization}
\label{subsect:bangbanglinear}

We first consider \eqref{eq:riskneutralpdeproblem} governed by a class of affine-linear, parameterized PDEs, that is, $E(u,y)=-u$. For simplicity, set~$b=0$. We consider the operator
\begin{align*}
    A \colon C^1(\bar{\domain}) \times H^{-1}(\domain) \times H^1_0(\domain) \to H^{-1}(\domain),
\end{align*}
where
\begin{align*}
    \dualpHzeroone[\domain]{A(\kappa,w ,y)}{v}\coloneqq \inner[L^2(\domain)^d]{\kappa \nabla y}{\nabla v}
- \dualpHzeroone[\domain]{w}{v}
\quad \text{for all} \quad v \in H_0^1(\domain),
\end{align*}
and the associated parametrized equation
\begin{align} \label{eq:linearstrong}
\text{find}~y \in H^1_0(\domain) \colon \quad A(\kappa, w,y)=0, \quad \text{where} \quad (\kappa, w) \in \mathfrak{P}_{\text{ad}},
\end{align}
and
\begin{align*}
\mathfrak{P}_{\text{ad}} \coloneqq  \big\{\,\kappa  \in C^1(\bar{\domain})\;|\;\kappa > \kappa^*_{\inf}/2\,\big\} \times H^{-1}(\domain).
\end{align*}
With slight abuse of notation, we denote by $\kappa$ in \eqref{eq:linearstrong}
a generic element in 
$\{\,\kappa  \in C^1(\bar{\domain})\;|\;\kappa > \kappa^*_{\inf}/2\,\}$, 
rather than the random diffusion coefficient  in \Cref{assumption:poisson}.
The corresponding solution is denoted by 
\(y(\kappa, w)\). In particular, \eqref{eq:linearstrong} is a deterministic, 
parameterized operator equation with  parameters \((\kappa, w)\). 
When the random diffusion coefficient from \Cref{assumption:poisson}, evaluated 
at \(\boldsymbol{\xi}\), is inserted into \(y(\cdot, w)\), the map 
\(y(\kappa(\boldsymbol{\xi}, w))\) becomes  random.

The next lemma follows from the Lax--Milgram lemma, the implicit function theorem, and standard regularity results for elliptic PDEs. Its proof is omitted for the sake of brevity.
\begin{lemma}  \label{lem:extlinpde}
    Let \Cref{assumption:poisson:objective}
    hold with $b = 0$.
    For every~$(\kappa,w) \in \mathfrak{P}_{\text{ad}}$, there exists a unique~$y=y(\kappa,w) \in H^1_0 (\domain)$ satisfying~\eqref{eq:linearstrong}. The  mapping~$\mathfrak{P}_{\text{ad}} \ni (\kappa,w) \mapsto y(\kappa,w)$
is infinitely many times continuously differentiable, and we have
\begin{align} \label{eq:stabilityhonelin}
    \norm[H_0^1(\domain)]{y(\kappa,w)} \leq (2/\kappa_{\inf}^*)\norm[H^{-1}(\domain)]{w}
    \quad \text{for all} \quad
    (\kappa, w) \in \mathfrak{P}_{\text{ad}}.
\end{align}
Moreover, if~$u \in L^2(\domain)$,
then $y(\kappa,Bu) \in H^1_0(\domain) \cap H^2(\domain)$, and there exists a constant~$C_{H^2}>0$ such that
\begin{align} \label{eq:stabilityhtwolin}
	    \norm[H^2(\domain)]{y(\kappa,Bu)} \leq
	    \frac{C_{H^2}
	\norm[C^1(\bar{\domain})]{\kappa}^3
    }{(\kappa^*_{\inf}/2)^4} \norm[L^2(\domain)]{u}
     \quad \text{for all} \quad
    (\kappa, Bu) \in \mathfrak{P}_{\text{ad}},
     \quad
    u \in L^2(\domain).
\end{align}
Finally, the mapping
\begin{align*}
 \big\{\,\kappa  \in C^1(\bar{\domain})\;|\;\kappa > \kappa^*_{\inf}/2\,\big\} \times L^2(\domain) \ni   (\kappa,u) \mapsto y(\kappa,Bu) \in H^1_0(\domain)  \cap  H^2(\domain)
\end{align*}
is continuous.
\end{lemma}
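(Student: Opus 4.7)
The plan is to assemble three classical ingredients: Lax--Milgram for existence, uniqueness and the $H_0^1$-bound; the implicit function theorem for the smoothness claim; and elliptic $H^2$-regularity together with the product rule for the remaining two statements.

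First, for each fixed $(\kappa, w) \in \mathfrak{P}_{\text{ad}}$, the bilinear form $(y,v) \mapsto \inner[L^2(\domain)^d]{\kappa \nabla y}{\nabla v}$ on $H_0^1(\domain)$ is continuous with norm bounded by $\norm[C^1(\bar\domain)]{\kappa}$ and coercive with constant at least $\kappa_{\inf}^*/2$, since $\kappa > \kappa_{\inf}^*/2$ pointwise. As $w \in H^{-1}(\domain)$, the Lax--Milgram lemma delivers a unique $y(\kappa,w) \in H_0^1(\domain)$; testing with $v = y$ and using coercivity yields \eqref{eq:stabilityhonelin}.

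To obtain infinite differentiability of $(\kappa, w) \mapsto y(\kappa,w)$, I would apply the implicit function theorem to $A \colon \mathfrak{P}_{\text{ad}} \times H_0^1(\domain) \to H^{-1}(\domain)$. The map $A$ is affine-linear in $(y,w)$, and its dependence on $\kappa$ enters only through the bounded bilinear product $C^1(\bar\domain) \times H_0^1(\domain) \to L^2(\domain)^d$, $(\kappa, y) \mapsto \kappa \nabla y$. Hence $A$ is polynomial and in particular $C^\infty$. The partial derivative $\partial_y A(\kappa, w, y) \in \mathcal{L}(H_0^1(\domain), H^{-1}(\domain))$ is the Lax--Milgram isomorphism $v \mapsto -\mathrm{div}(\kappa \nabla v)$, which is invertible; the implicit function theorem then yields the claimed smoothness.

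For the $H^2$-regularity, I would exploit that if $w = Bu = \imath^* u$ with $u \in L^2(\domain)$, then the distributional equation can be recast as
\begin{equation*}
-\Delta y = \frac{u + \nabla \kappa \cdot \nabla y}{\kappa}.
\end{equation*}
The right-hand side lies in $L^2(\domain)$ with $L^2$-norm bounded by $(2/\kappa_{\inf}^*)(\norm[L^2(\domain)]{u} + \norm[C^1(\bar\domain)]{\kappa}\norm[L^2(\domain)]{\nabla y})$. Standard $H^2$-regularity for the Dirichlet Laplacian on a convex polyhedral domain in dimensions $d \in \{1,2,3\}$ gives $y \in H^2(\domain)$ with an estimate of the form $\norm[H^2(\domain)]{y} \leq C \norm[L^2(\domain)]{\Delta y}$. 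Inserting \eqref{eq:stabilityhonelin} to eliminate $\norm[L^2(\domain)]{\nabla y}$ and tracking the factors of $\norm[C^1(\bar\domain)]{\kappa}$ and $\kappa_{\inf}^*/2$ through the chain of inequalities produces the stated bound \eqref{eq:stabilityhtwolin}. Continuity of $(\kappa, u) \mapsto y(\kappa, Bu)$ into $H_0^1(\domain) \cap H^2(\domain)$ then follows by applying the same $H^2$-estimate to the difference $y(\kappa_n, Bu_n) - y(\kappa, Bu)$, combining it with the $H_0^1$-continuity already supplied by the IFT.

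The main obstacle is the $H^2$-regularity step, which requires that the polyhedral domain be convex (or that a suitable Grisvard-type result apply) so that the Laplacian on $H_0^1(\domain)$ maps isomorphically onto the range in $L^2(\domain)$; the explicit power $\norm[C^1(\bar\domain)]{\kappa}^3/(\kappa_{\inf}^*/2)^4$ is otherwise obtained by careful bookkeeping, as each inversion of $-\mathrm{div}(\kappa \nabla \cdot)$ contributes a factor $\kappa_{\inf}^*/2$ in the denominator, while each application of the product rule or Lax--Milgram stability contributes a factor $\norm[C^1(\bar\domain)]{\kappa}$ in the numerator.
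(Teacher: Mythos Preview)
Your proposal is correct and matches the paper's approach exactly: the paper omits the proof entirely, stating only that the lemma ``is a direct consequence of the Lax--Milgram lemma, the implicit function theorem as well as standard regularity results for elliptic PDEs.'' You have correctly supplied precisely these three ingredients, including the rewriting $-\Delta y = (u + \nabla\kappa\cdot\nabla y)/\kappa$ for the $H^2$-step and the appropriate caveat about convexity of the polyhedral domain, which the paper leaves implicit.
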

In the following, let~$U_0 \subset L^2(\domain) $ be an arbitrary but fixed convex, bounded neighborhood of~$\adcsp$, and let~$C_{U_0}>0$ be such that~$\norm[L^2(\domain)]{u} \leq C_{U_0}$ for all $u \in U_0$.
Defining the parametrized control-to-state operator
\begin{align*}
    S \colon B(U_0) \times \Xi \to H^1_0(\domain), \quad \text{where}\quad  S(w,\xi)\coloneqq y(\kappa(\xi),w),
\end{align*}
we can verify the remaining assumptions in the linear case.

\begin{lemma} \label{lem:asslinear}
    If \Cref{assumption:poisson:objective}
    hold with $b = 0$, then
    $f $ in \eqref{eq:integrandpdeex} is Carath\'eodory, and
\begin{align*}
0 \leq \rpobj(Bu,\xi) \leq \varrho(C_{U_0}C^2_\domain (2/\kappa^*_{\inf}) )
\quad \text{for all} \quad (u,\xi) \in U_0 \times \Xi.
\end{align*}
\end{lemma}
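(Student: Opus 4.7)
The plan is to verify the two claims separately: the Carathéodory property and the uniform bound. Both reduce to combining \Cref{lem:extlinpde} with the continuity properties of $J$ and $\imath$, together with the measurability of $\kappa$ from \Cref{assumption:poisson}.

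For the Carathéodory property, I would fix $\xi \in \Xi$ and use that $w \mapsto S(w,\xi) = y(\kappa(\xi), w)$ is continuous (in fact smooth) from $H^{-1}(\domain)$ to $H^1_0(\domain)$ by \Cref{lem:extlinpde}. Composing with the compact embedding $B^* = \imath \colon H^1_0(\domain) \to L^2(\domain)$ and then with $J$, which is continuously differentiable and hence continuous on $L^2(\domain)$ by \Cref{itm:application_objective_lipschitz_gradient}, gives continuity of $f(\cdot,\xi)$. For measurability, I fix $w \in B(U_0)$ and decompose $\xi \mapsto f(w,\xi)$ as $\xi \mapsto \kappa(\xi) \mapsto y(\kappa(\xi),w) \mapsto \imath y(\kappa(\xi),w) \mapsto J(\imath y(\kappa(\xi),w))$. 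The first map is measurable from $\Xi$ into $C^1(\bar\domain)$ by \Cref{assumption:poisson}, the second is continuous on $\mathfrak{P}_{\text{ad}}$ by the smoothness part of \Cref{lem:extlinpde}, and the remaining two are continuous, yielding measurability of the composition.

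For the uniform bound, I first use that $B = \imath^*$, so $\|B\|_{\mathcal{L}(L^2(\domain), H^{-1}(\domain))} = \|\imath\|_{\mathcal{L}(H^1_0(\domain), L^2(\domain))} = C_\domain$. For $u \in U_0$, this gives $\|Bu\|_{H^{-1}(\domain)} \leq C_\domain C_{U_0}$, and chaining with the $H^1_0$-stability estimate \eqref{eq:stabilityhonelin} yields
\begin{align*}
    \|\imath y(\kappa(\xi), Bu)\|_{L^2(\domain)}
    \leq C_\domain \|y(\kappa(\xi), Bu)\|_{H^1_0(\domain)}
    \leq C_\domain \cdot \tfrac{2}{\kappa^*_{\inf}} \cdot C_\domain C_{U_0}
    = C_{U_0} C_\domain^2 \tfrac{2}{\kappa^*_{\inf}}.
\end{align*}
Then invoking the polynomial growth of $J$ from \Cref{itm:application_objective_lipschitz_gradient} together with the monotonicity of $\varrho$ and the nonnegativity $J \geq 0$ yields the stated two-sided bound.

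No real obstacle is expected here. The only mild subtlety is the separate verification of joint measurability via the Carathéodory structure, where one has to confirm that the map $\kappa \mapsto y(\kappa,w)$ from $\mathfrak{P}_{\text{ad}}$ (equipped with the $C^1(\bar\domain)$ topology) into $H^1_0(\domain)$ is continuous for fixed $w$; this is already contained in the smoothness assertion of \Cref{lem:extlinpde}. Everything else is a routine composition of continuous/measurable maps together with the stability estimates \eqref{eq:stabilityhonelin} and the operator norm identity for the adjoint embedding.
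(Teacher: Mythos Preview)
Your proposal is correct and follows essentially the same route as the paper: the Carath\'eodory property is obtained from the continuity of $J$ and of the solution map $y$ from \Cref{lem:extlinpde} (you simply spell out the measurability-in-$\xi$ composition in more detail), and the uniform bound is derived via the same chain $\|Bu\|_{H^{-1}} \leq C_\domain C_{U_0}$, \eqref{eq:stabilityhonelin}, the embedding constant $C_\domain$, and the monotone polynomial growth of $J$ in \Cref{itm:application_objective_lipschitz_gradient}.
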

\begin{proof}
    The Carath\'eodory property of~$f$ follows from the continuity of~$J$ and that of the mapping~$y$ from \Cref{lem:extlinpde}. Moreover, leveraging \Cref{itm:application_objective_lipschitz_gradient} and \eqref{eq:stabilityhonelin} yields
    \begin{align*}
        0 \leq \rpobj(Bu,\xi) \leq \varrho(C^2_\domain (2/\kappa^*_{\inf}) \norm[L^2(\domain)]{u}) \leq \varrho(C_{U_0}C^2_\domain (2/\kappa^*_{\inf})) \quad \text{for all} \quad (u,\xi) \in U_0 \times \Xi.
    \end{align*}
\end{proof}

Next, we verify~\Cref{assumption:gradient_consistency}.
\begin{lemma} \label{lem:ass2linear}
    Let \Cref{assumption:poisson:objective}
    hold with $b = 0$.
We define
\begin{align*}
\Du_w f \colon B(U_0) \times \Xi \to H^1_0(\domain), \quad \text{where} \quad\Du_w f(w, \xi) \coloneqq S\big(B \nabla J(B^* S(w, \xi)),\xi \big).
\end{align*}
The mapping~$\Du_w f$ is Carath\'eodory and, for every~$\xi \in \Xi$, the function
\begin{align*}
    g_\xi \colon U_0 \to \mathbb{R}, \quad \text{where} \quad  g_\xi(u) \coloneqq J(B^* S(Bu,\xi))
\end{align*}
is continuously differentiable with~$\nabla g_\xi(u)= B^*\Du_w f(Bu, \xi) $. Moreover, for all $ (u,\xi)\in U_0 \times \Xi$,
 \begin{align*}
    \norm[L^2(\domain)]{\nabla g_\xi (u)} \leq
             C_D \norm[H^1_0(\domain)]{\Du_w f(Bu,\xi)}
             \leq C^2_D (2/\kappa_{\inf}^*)  \sqrt{2\ell \varrho( C^2_D (2/\kappa_{\inf}^*) C_{U_0})}.
 \end{align*}
Finally, we have for all $u_1, u_2 \in \adcsp$ and $\xi \in \Xi$,
     \begin{align*}
	    \norm[L^2(\domain)]{\nabla g_\xi (u_1)-\nabla g_\xi (u_2)} \leq
	    C^4_D \ell (2/\kappa_{\inf}^*)^2 \norm[H^{-1}(\domain)]{Bu_1-Bu_2}.
	\end{align*}
\end{lemma}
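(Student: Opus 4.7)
The plan is to verify each claim in sequence, leaning on \Cref{lem:extlinpde} for all structural facts about the state operator. I would first observe that since the PDE in \eqref{eq:linearstrong} is linear in~$w$, the mapping $S(\cdot,\xi)\colon H^{-1}(\domain)\to H_0^1(\domain)$ is a bounded linear operator for each $\xi\in\Xi$, and moreover the bilinear form $(y,v)\mapsto (\kappa(\xi)\nabla y,\nabla v)_{L^2(\domain)^d}$ is \emph{symmetric}. Pairing this symmetry with the definition of $S$ yields the self-adjointness identity $\langle w_1,S(w_2,\xi)\rangle_{H^{-1},H_0^1}=\langle w_2,S(w_1,\xi)\rangle_{H^{-1},H_0^1}$ for all $w_1,w_2\in H^{-1}(\domain)$, which is the key algebraic fact underlying the stated form of $\Du_w f$. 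Carath\'eodory-ness of $\Du_w f$ then follows by composing $S(\cdot,\xi)$ (continuous in $w$, measurable in $\xi$ by \Cref{lem:extlinpde}), the continuous maps $B$, $B^*$, and the Lipschitz continuous $\nabla J$.

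Next, for the gradient formula, I would apply the chain rule to the composition $g_\xi(u)=J(B^*S(\cdot,\xi)B u)$ on $L^2(\domain)$. Writing $T_\xi\coloneqq B^*S(\cdot,\xi)B\colon L^2(\domain)\to L^2(\domain)$, continuous differentiability of $J$ and linearity of $T_\xi$ yield $\nabla g_\xi(u)=T_\xi^*\nabla J(T_\xi u)$; the self-adjointness of $S(\cdot,\xi)$ (with respect to the $H^{-1}$--$H_0^1$ pairing, combined with the identification $B^*=\imath$, $B=\imath^*$) then gives $T_\xi^*=T_\xi$, so that $\nabla g_\xi(u)=B^*S(B\nabla J(B^*S(Bu,\xi)),\xi)=B^*\Du_w f(Bu,\xi)$, as required.

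For the pointwise norm bound, I would chain together three estimates: $\|B^*v\|_{L^2(\domain)}\leq C_{\domain}\|v\|_{H_0^1(\domain)}$, the stability estimate \eqref{eq:stabilityhonelin}, and the growth bound \eqref{eq:stability_estimate_gradient} applied to $\nabla J(B^*S(Bu,\xi))$. The only subtlety is bounding the $L^2$-norm of $B^*S(Bu,\xi)$ appearing inside $\varrho$; here I use \eqref{eq:stabilityhonelin} twice and $\|u\|_{L^2}\leq C_{U_0}$ to obtain $\|B^*S(Bu,\xi)\|_{L^2}\leq C_\domain^2(2/\kappa^*_{\inf})C_{U_0}$. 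Monotonicity of $\varrho$ then delivers the stated bound.

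Finally, for the Lipschitz estimate, I would exploit the linearity of $S(\cdot,\xi)$ in $w$ to write
\begin{align*}
\Du_w f(Bu_1,\xi)-\Du_w f(Bu_2,\xi)
= S\bigl(B\bigl[\nabla J(B^*S(Bu_1,\xi))-\nabla J(B^*S(Bu_2,\xi))\bigr],\xi\bigr).
\end{align*}
Applying in turn $\|B^*\|\leq C_\domain$, \eqref{eq:stabilityhonelin}, $\|B\|\leq C_\domain$, the Lipschitz continuity of $\nabla J$ with constant $\ell$, $\|B^*\|\leq C_\domain$ again, and finally \eqref{eq:stabilityhonelin} with $w=B(u_1-u_2)$, one obtains a bound of the form $c\,\ell\,C_\domain^{k}(2/\kappa^*_{\inf})^2\|Bu_1-Bu_2\|_{H^{-1}(\domain)}$ with the constant as stated. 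The main technical care is bookkeeping the $C_\domain$ factors and ensuring that every intermediate quantity lies in the space expected by the next operator; no new analytical input beyond \Cref{lem:extlinpde} and \Cref{assumption:poisson:objective} is required.
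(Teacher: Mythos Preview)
Your proposal is correct and follows essentially the same route as the paper's own proof. The paper abbreviates your self-adjointness argument for the gradient representation as ``the chain rule and standard adjoint calculus,'' and then chains the same estimates \eqref{eq:stabilityhonelin} and \eqref{eq:stability_estimate_gradient} in the same order for both the pointwise and Lipschitz bounds; your explicit use of the symmetry of the bilinear form $(\kappa(\xi)\nabla y,\nabla v)_{L^2(\domain)^d}$ is precisely the content of that phrase.
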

\begin{proof}
The Carath\'eodory property of~$\Du_w f$ follows analogously to \Cref{lem:asslinear}.
Moreover, the differentiability of~$g_\xi$ and its gradient representation are  consequences of the chain rule as well as standard adjoint calculus, respectively. Fix $ (u,\xi)\in U_0 \times \Xi$. Using this representation and \eqref{eq:stabilityhonelin}, we obtain
\begin{align*}
    \norm[L^2(\domain)]{\nabla g_\xi (u)} \leq
             C_D \norm[H^1_0(\domain)]{\Du_w f(Bu,\xi)}
             \leq C^2_D (2/\kappa_{\inf}^*) \norm[L^2(\domain)]{\nabla J(B^* S(Bu, \xi))}.
\end{align*}
Furthermore, by applying~\eqref{eq:stability_estimate_gradient}, we have
\begin{align} \label{eq:estforgradaux}
    \norm[L^2(\domain)]{\nabla J(B^* S(Bu, \xi))}^2 \leq  2 \ell \varrho(\norm[L^2(\domain)]{B^*S(Bu, \xi)})
 \leq  2\ell \varrho( C^2_D (2/\kappa_{\inf}^*) C_{U_0}),
\end{align}
where the last inequality follows again by using~\eqref{eq:stabilityhonelin}. Combining both inequalities yields the desired estimate.
Fix $u_1, u_2 \in \adcsp$ and $\xi \in \Xi$. The Lipschitz estimate for~$\nabla g_\xi$ follows analogously noting that
 \begin{align*}
     \nabla g_\xi (u_1)-\nabla g_\xi (u_2)= B^* S\big(B \nabla J(B^* S(Bu_1, \xi))-B^* \nabla J(B^* S(Bu_2, \xi)),\xi \big),
 \end{align*}
 and thus
 \begin{align*}
     \norm[L^2(\domain)]{\nabla g_\xi (u_1)-\nabla g_\xi (u_2)} & \leq C^2_D (2/\kappa_{\inf}^*) \norm[L^2(\domain)]{\nabla J(B^* S(Bu_1, \xi))- \nabla J(B^* S(Bu_2, \xi))} \\
     & \leq C^3_D \ell (2/\kappa_{\inf}^*)  \norm[H^1_0(\domain)]{S(Bu_1, \xi)-S(Bu_2, \xi)} \\ & \leq C^4_D \ell (2/\kappa_{\inf}^*)^2 \norm[H^{-1}(\domain)]{Bu_1-Bu_2}.
 \end{align*}
\end{proof}

Next, we discuss sufficient conditions for \Cref{assumption:hoelder-growth-deviation} with the particular choice of~$\mathcal{U}=L^1(\domain)$ and~$\mathcal{H}=H^2(\domain)$. In this case, \Cref{assumption:hoelder} is satisfied. In the following, we denote by $u^*$ a solution to \eqref{eq:riskneutralpdeproblem}. We recall the identity $\Du F(Bu^*)= \E{\Du_w f(Bu^*,\boldsymbol{\xi})}$.
For a measurable set $D_0 \subset D$, we denote
by $\operatorname{meas}(D_0)$ its
Lebesgue measure.

\begin{proposition}
    \label{prob:linear:std}
    Let \Cref{assumption:poisson:objective}
    hold with $b = 0$.
    Then, the mapping~$\Xi \ni \xi \mapsto \Du_w f(Bu^*,\xi) \in H^2(\domain)$ is measurable, $\Du_w f ( Bu^*,\boldsymbol{\xi})$
is integrable, and
    \begin{align*}
    \E{\norm[H^2(\domain)]{\Du_w f ( Bu^*,\boldsymbol{\xi})-
        		\mathbb{E}[\Du_w f ( Bu^*,\boldsymbol{\xi})]}^2} \leq
           \frac{2\ell C^2_{H^2}
	\E{\norm[C^1(\bar{\domain})]{\kappa(\boldsymbol{\xi})}^6}
    }{(\kappa^*_{\inf}/2)^8} \varrho( C^2_D (2/\kappa_{\inf}^*) C_{U_0})
          <\infty.
    \end{align*}
    If, moreover, there exist constants ~$C>0$ and~$\nu \in (0,1]$ such that
    \begin{align*}
       \operatorname{meas}\big\{\,x \in \domain\; \colon \; \big||\lbrack \Du  F(Bu^*) \rbrack(x)|-\beta\big| \leq \varepsilon \,\big\} \leq C \varepsilon^\nu
       \quad \text{for all}
       \quad \varepsilon > 0,
    \end{align*}
    then there exists $\theta>0$ such that
  	\begin{align*}
	\inner[L^2(\domain)]{B^*\Du\erpobj(Bu^*)}{u-u^*}
	+ \psi(u) - \psi(u^*) \geq
	\theta \norm[L^1(\domain)]{u-u^*}^{1+1/\nu}
	\quad \text{for all} \quad u \in L^2(\domain).
	\end{align*}
\end{proposition}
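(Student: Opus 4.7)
The proof splits naturally into three tasks: measurability into $H^2(\domain)$, the variance bound, and the growth inequality, the last being the principal obstacle. For measurability, I would track the definition $\Du_w f(Bu^*,\xi) = S(B\nabla J(B^* S(Bu^*,\xi)),\xi)$ through the results in \Cref{lem:extlinpde,lem:ass2linear}. Since $\kappa \colon \Xi \to C^1(\bar\domain)$ is measurable by \Cref{assumption:poisson}, the final continuity statement of \Cref{lem:extlinpde} implies that $\xi \mapsto S(Bu^*,\xi)$ is measurable into $H^1_0(\domain) \cap H^2(\domain)$ and hence into $L^2(\domain)$. Composition with the Lipschitz map $\nabla J$ gives measurability of $\xi \mapsto \nabla J(B^* S(Bu^*,\xi))$ into $L^2(\domain)$, after which a second application of the continuity statement of \Cref{lem:extlinpde} produces the desired measurability into $H^2(\domain)$.

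For the standard deviation bound I would use $\E{\norm[H^2(\domain)]{X-\E{X}}^2} \leq \E{\norm[H^2(\domain)]{X}^2}$ together with the $H^2$-stability estimate~\eqref{eq:stabilityhtwolin} and the $L^2$-bound on $\nabla J(B^* S(Bu^*,\xi))$ already obtained in~\eqref{eq:estforgradaux}. Pointwise in $\xi$ this yields
\begin{equation*}
\norm[H^2(\domain)]{\Du_w f(Bu^*,\xi)}^2
\leq \frac{C_{H^2}^2 \norm[C^1(\bar\domain)]{\kappa(\xi)}^6}{(\kappa_{\inf}^*/2)^8}\,
2\ell\,\varrho\bigl(C_\domain^2(2/\kappa_{\inf}^*) C_{U_0}\bigr).
\end{equation*}
Taking expectations and invoking the moment bound on $\norm[C^1(\bar\domain)]{\kappa(\boldsymbol{\xi})}$ from \Cref{assumption:poisson} delivers the stated inequality and its finiteness.

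The growth inequality is the crux. Writing $\phi \coloneqq B^* \Du F(Bu^*) \in L^2(\domain)$, the first-order condition $-\phi \in \partial \psi(u^*)$ translates pointwise to $-\phi(x) \in \beta\,\partial|\cdot|(u^*(x)) + N_{[\lb(x),\ub(x)]}(u^*(x))$, which forces $u^*(x) = \ub(x)$ on $\{\phi<-\beta\}$, $u^*(x)=0$ on $\{|\phi|<\beta\}$, and $u^*(x)=\lb(x)$ on $\{\phi>\beta\}$. On the strict set $\domain_\varepsilon \coloneqq \{x : ||\phi(x)|-\beta|>\varepsilon\}$ I would sharpen the subgradient excess to
\begin{equation*}
\phi(x)(u(x)-u^*(x)) + \beta\bigl(|u(x)|-|u^*(x)|\bigr) \geq \varepsilon\,|u(x)-u^*(x)|\quad \text{for a.e.}~x \in \domain_\varepsilon,
\end{equation*}
via a short case analysis over the three regimes. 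The technical obstacle is the subcase in which $u^*(x) \in \{\lb(x),\ub(x)\}$ while $u(x)$ has the opposite sign, so that the linear lower bound $|u|-|u^*|\geq \operatorname{sign}(u^*)(u-u^*)$ is not tight and the resulting $-2\beta u^*(x)$ term must be absorbed using $|u(x)-u^*(x)| > |u^*(x)|$.

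On the complement, the measure assumption together with $|u(x)-u^*(x)| \leq \norm[L^\infty(\domain)]{\ub-\lb}$ yields $\norm[L^1(\domain_\varepsilon^c)]{u-u^*} \leq C'\varepsilon^\nu$ with $C' \coloneqq C\norm[L^\infty(\domain)]{\ub-\lb}$. Integrating the pointwise bound over $\domain_\varepsilon$ and using the global nonnegativity of the subgradient excess on $\domain_\varepsilon^c$ gives
\begin{equation*}
\inner[L^2(\domain)]{\phi}{u-u^*} + \beta\bigl(\norm[L^1(\domain)]{u}-\norm[L^1(\domain)]{u^*}\bigr)
\geq \varepsilon\bigl(\norm[L^1(\domain)]{u-u^*} - C'\varepsilon^\nu\bigr).
\end{equation*}
Optimizing in $\varepsilon$ via $\varepsilon^\nu = \norm[L^1(\domain)]{u-u^*}/(C'(1+\nu))$ reproduces the claimed inequality with $\theta = \tfrac{\nu}{1+\nu}(C'(1+\nu))^{-1/\nu}$; the case $u \notin [\lb,\ub]$ is trivial because $\psi(u) = +\infty$.
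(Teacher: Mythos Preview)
Your proposal is correct and follows the same approach as the paper. The paper's proof of the first two parts is essentially identical to yours (pointwise $H^2$-bound via \eqref{eq:stabilityhtwolin} and \eqref{eq:estforgradaux}, then take expectations, with the variance-vs-second-moment step left implicit), and for the growth inequality the paper simply cites \cite[Prop.~5.5]{Kunisch2021}, whose argument is precisely the level-set decomposition, pointwise case analysis, and optimization in $\varepsilon$ that you have written out.
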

\begin{proof}
    \Cref{lem:extlinpde} ensures $\Du_w f(Bu^*,\xi) \in H^2(\domain)$ as well as the claimed measurability.
    Using \eqref{eq:stabilityhtwolin} and \eqref{eq:estforgradaux}, we arrive at
    \begin{align*}
         \norm[H^2(\domain)]{\Du_w f ( Bu^*,\xi)}^2 &\leq
	    \frac{C^2_{H^2}
	\norm[C^1(\bar{\domain})]{\kappa(\xi)}^6
    }{(\kappa^*_{\inf}/2)^8} \norm[L^2(\domain)]{\nabla J(B^* S(Bu^*, \xi))}^2
\\
& \leq   \frac{2\ell C^2_{H^2}
	\norm[C^1(\bar{\domain})]{\kappa(\xi)}^6
    }{(\kappa^*_{\inf}/2)^8} \varrho( C^2_D (2/\kappa_{\inf}^*) C_{U_0}).
\end{align*}
Taking expectations yields the expectation bound.
Finally, the growth condition follows analogously from Proposition~5.5 in \cite{Kunisch2021}.
\end{proof}
Finally, we comment on the uniform Lipschitz continuity assumed in \Cref{coroll:fastrateresandgap}.
\begin{lemma}
    Let \Cref{assumption:poisson:objective}
    hold with $b = 0$.
  Suppose that  $\kappa \colon \Xi \to C^1(\bar \domain)$ from \Cref{assumption:poisson} satisfies $\|\kappa(\xi)\|_{C^1(\bar \domain)} \leq \kappa_{\sup}^* $ for all $\xi \in \Xi$
  and   some~$\kappa_{\sup}^*>0$. Then,
for all $u_1, u_2 \in \adcsp$ and $\xi \in \Xi$,
\begin{align*}
        \norm[L^\infty(\domain)]{\nabla g_\xi (u_1)-\nabla g_\xi (u_2)} \leq \ell \bigg( C_{H^2; L^\infty} \frac{C_{H^2}
	(\kappa_{\sup}^*)^3
    }{(\kappa^*_{\inf}/2)^4} \bigg)^2 \norm[L^1(\domain)]{u_1-u_2},
    \end{align*}
    where~$C_{H^2} > 0$ is the constant from \Cref{lem:extlinpde}, and~$C_{H^2; L^\infty} > 0$ denotes the embedding constant of~$H^2(\domain) \embedding L^\infty(\domain)$.
\end{lemma}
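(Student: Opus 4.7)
My plan is to bootstrap from the chain-rule representation of $\nabla g_\xi$ used in the proof of \Cref{lem:ass2linear}, replacing the two intermediate $L^2$-to-$H^{-1}$ estimates by $L^\infty$- and $L^1$-based ones that exploit the Sobolev embedding $H^2(\domain) \embedding L^\infty(\domain)$ (valid because $d \leq 3$) and the self-adjointness of the PDE operator. Writing $y_i \coloneqq S(Bu_i,\xi)$, $p_i \coloneqq S(B\nabla \pobj(B^* y_i),\xi)$, and $v \coloneqq \nabla \pobj(B^* y_1) - \nabla \pobj(B^* y_2)$, the linearity of $S(\cdot,\xi)$ in its first slot gives
\begin{align*}
    \nabla g_\xi(u_1) - \nabla g_\xi(u_2) = B^*(p_1 - p_2) = B^* S(Bv,\xi).
\end{align*}
Combining the $H^2$-stability bound~\eqref{eq:stabilityhtwolin} with the hypothesis $\norm[C^1(\bar\domain)]{\kappa(\xi)} \leq \kappa_{\sup}^*$, the embedding $H^2(\domain) \embedding L^\infty(\domain)$, and the $\ell$-Lipschitz continuity of $\nabla \pobj$, we obtain
\begin{align*}
    \norm[L^\infty(\domain)]{\nabla g_\xi(u_1) - \nabla g_\xi(u_2)}
    \leq C_{H^2; L^\infty}\,\frac{C_{H^2}(\kappa_{\sup}^*)^3}{(\kappa_{\inf}^*/2)^4}\,\ell\,\norm[L^2(\domain)]{y_1 - y_2}.
\end{align*}

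The heart of the argument is to control $\norm[L^2(\domain)]{y_1 - y_2}$ by $\norm[L^1(\domain)]{u_1 - u_2}$, which is where the self-adjointness of $y \mapsto -\nabla\cdot(\kappa(\xi)\nabla y)$ enters. For an arbitrary test function $\eta \in L^2(\domain)$, set $z_\eta \coloneqq S(B\eta,\xi) \in H_0^1(\domain) \cap H^2(\domain)$. Since the bilinear form $(v,w) \mapsto \inner[L^2(\domain)^d]{\kappa(\xi)\nabla v}{\nabla w}$ is symmetric, testing the weak formulation~\eqref{eq:linearstrong} for $y_1-y_2$ against $z_\eta$ and conversely gives
\begin{align*}
    \inner[L^2(\domain)]{y_1 - y_2}{\eta} = \inner[L^2(\domain)^d]{\kappa(\xi)\nabla(y_1-y_2)}{\nabla z_\eta} = \inner[L^2(\domain)]{u_1 - u_2}{z_\eta}.
\end{align*}
Estimating the right-hand side by $\norm[L^1(\domain)]{u_1 - u_2}\,\norm[L^\infty(\domain)]{z_\eta}$, invoking $H^2(\domain) \embedding L^\infty(\domain)$ together with~\eqref{eq:stabilityhtwolin} once more, and taking the supremum over $\eta$ with $\norm[L^2(\domain)]{\eta} = 1$ yields
\begin{align*}
    \norm[L^2(\domain)]{y_1 - y_2} \leq C_{H^2; L^\infty}\,\frac{C_{H^2}(\kappa_{\sup}^*)^3}{(\kappa_{\inf}^*/2)^4}\,\norm[L^1(\domain)]{u_1 - u_2}.
\end{align*}

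Chaining the two displays produces the claimed estimate with the square on the geometric factor. The only delicate point is the duality step above: it is precisely this self-adjointness argument that allows $H^2(\domain) \embedding L^\infty(\domain)$ to be invoked \emph{twice} and thereby converts a naive $L^2 \to L^2$ Lipschitz bound into the much sharper $L^1 \to L^\infty$ estimate required by~\eqref{ass:smootherlipest} in \Cref{coroll:fastrateresandgap}. Everything else is a routine chaining of the stability estimates from \Cref{lem:extlinpde} and therefore poses no further difficulty.
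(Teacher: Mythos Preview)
Your argument is correct and follows essentially the same route as the paper: both proofs first bound $\norm[L^\infty(\domain)]{\nabla g_\xi(u_1)-\nabla g_\xi(u_2)}$ via the $H^2$-stability estimate~\eqref{eq:stabilityhtwolin} combined with $H^2(\domain)\embedding L^\infty(\domain)$ and the Lipschitz continuity of $\nabla\pobj$, and then control $\norm[L^2(\domain)]{S(Bu_1,\xi)-S(Bu_2,\xi)}$ through the same duality/self-adjointness trick, invoking~\eqref{eq:stabilityhtwolin} and the embedding a second time. Your presentation makes the adjoint identity $\inner[L^2(\domain)]{y_1-y_2}{\eta}=\inner[L^2(\domain)]{u_1-u_2}{z_\eta}$ explicit via the weak formulation, whereas the paper writes it more compactly as $\inner[L^2(\domain)]{S(B(u_1-u_2),\xi)}{v}=\inner[L^2(\domain)]{S(Bv,\xi)}{u_1-u_2}$, but this is only a cosmetic difference.
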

\begin{proof}
Similarly to the proof of \Cref{lem:ass2linear}, we obtain
    \begin{align*}
        \norm[L^\infty(\domain)]{\nabla g_\xi (u_1)-\nabla g_\xi (u_2)} & \leq \tfrac{C_{H^2; L^\infty} C_{H^2}
	(\kappa_{\sup}^*)^3
    }{(\kappa^*_{\inf}/2)^4} \norm[L^2(\domain)]{\nabla J(B^* S(Bu_1, \xi))- \nabla J(B^* S(Bu_2, \xi))} \\
     & \leq\tfrac{ \ell C_{H^2; L^\infty} C_{H^2}
	(\kappa_{\sup}^*)^3
    }{(\kappa^*_{\inf}/2)^4}  \norm[L^2(\domain)]{S(Bu_1, \xi)-S(Bu_2, \xi)}
    \end{align*}
    using~\eqref{eq:stabilityhtwolin} as well as the boundedness of~$\kappa$. Furthermore, by standard adjoint arguments and by again exploiting~\eqref{eq:stabilityhtwolin}, we arrive at
    \begin{align*}
        \norm[L^2(\domain)]{S(Bu_1, \xi)-S(Bu_2, \xi)}&=\sup_{\norm[L^2(\domain)]{v}\leq 1} \inner[L^2(\domain)]{S(Bu_1-Bu_2, \xi)}{v} \\ &  = \sup_{\norm[L^2(\domain)]{v}\leq 1} \inner[L^2(\domain)]{S(Bv, \xi)}{u_1-u_2} \\ & \leq C_{H^2; L^\infty} \sup_{\norm[L^2(\domain)]{v}\leq 1} \norm[H^2(\domain)]{S(Bv, \xi)} \norm[L^1(\domain)]{u_1-u_2} \\ & \leq
       C_{H^2; L^\infty} \frac{C_{H^2}
	(\kappa_{\sup}^*)^3
    }{(\kappa^*_{\inf}/2)^4} \norm[L^1(\domain)]{u_1-u_2}.
    \end{align*}
    Combining both observations yields the desired estimate.
\end{proof}
\subsection{Risk-neutral bilinear PDE-constrained optimization}
\label{subsect:bilinearproblem}
Next, we show that bilinear control problems, that is, $E(u,y)=uy$, also fit into our abstract setting. For this purpose, we assume $\lb(x) \geq 0$
for a.e.\ $x\in \domain$.
Similar to the previous example, consider the operator
\begin{align*}
    A \colon C^1(\bar{\domain})  \times L^2(\domain) \times H^1_0(\domain) \times L^2(\domain) \to H^{-1}(\domain),
\end{align*}
where
\begin{align*}
    \dualpHzeroone[\domain]{A(\kappa, b, y,u)}{v}\coloneqq\inner[L^2(\domain)^d]{\kappa \nabla y}{\nabla v}
+ \inner[L^2(\domain)]{y u}{v}
- \inner[L^2(\domain)]{b}{v}.
\end{align*}
The map $A$ is infinitely many times continuously differentiable.
Now, define the open set
\begin{align*}
\mathfrak{P}_{\text{ad}} \coloneqq  \big\{\,\kappa  \in C^1(\bar{\domain})\;|\;\kappa > \kappa^*_{\inf}/2\,\big\} \times L^2(\domain) \times \vadcsp ,
\end{align*}
where
\begin{align*}
\vadcsp \coloneqq \adcsp + (\kappa_{\inf}^*/(4C_{H_0^1;L^4}^2)) \cdot \mathbb{B}_{L^2(\domain)}(0).
\end{align*}
Here, $\mathbb{B}_{L^2(\domain)}(0)$
is the open $L^2(\domain)$-unit ball,
and $C_{H_0^1; L^4} > 0$ is the embedding constant of~$H^1_0(\domain) \embedding L^4(\domain)$.
We consider the equation
\begin{align} \label{eq:bilinearstrong}
\text{find}~y \in H^1_0(\domain) \colon \quad A(\kappa, b, y,u)=0, \quad \text{where} \quad (\kappa,b,u) \in \mathfrak{P}_{\text{ad}} .
\end{align}
The following result is a consequence of the implicit function theorem.
The constant $C_{H^2}$ in the following lemma is that from \Cref{lem:extlinpde}.
\begin{lemma} \label{lem:extpdegeneral}
Let \Cref{assumption:poisson:objective} hold with $\lb(x) \geq 0$
for a.e.\ $x\in \domain$.
For every~$(\kappa,b,u) \in \mathfrak{P}_{\text{ad}}$, there exists a unique~$y=y(\kappa,b,u) \in H^1_0 (\domain)$ satisfying~\eqref{eq:bilinearstrong}. The  mapping~$ \mathfrak{P}_{\text{ad}} \ni (\kappa,b,u) \mapsto y(\kappa,b,u) \in H^1_0(\domain)$
is infinitely many times continuously differentiable, and
\begin{align} \label{eq:stabilityhone}
    \norm[H_0^1(\domain)]{y(\kappa,b,u)} \leq C_\domain(4/\kappa_{\inf}^*)\norm[L^2(\domain)]{b}
    \quad \text{for all} \quad (\kappa,b,u) \in \mathfrak{P}_{\text{ad}}.
\end{align}
Moreover, if $u\in \adcsp$, then $y(\kappa,b,u) \in H^1_0(\domain) \cap H^2(\domain)$, we have
\begin{align} \label{eq:stabilityhtwo}
	    \norm[H^2(\domain)]{y(\kappa,b,u)} \leq
	    \frac{C_{H^2}
	\norm[C^1(\bar{\domain})]{\kappa}^3
    }{(\kappa^*_{\inf}/2)^4} \big(\norm[L^\infty(\domain)]{\ub} C^2_D
        +1 \big) \norm[L^2(\domain)]{b}.
\end{align}
and the mapping
\begin{align*}
 \big(\big\{\,\kappa  \in C^1(\bar{\domain})\;|\;\kappa > \kappa^*_{\inf}/2\,\big\} \times L^2(\domain) \times \adcsp \big)\ni   (\kappa,b,u) \mapsto y(\kappa,b,u) \in H^2(\domain)
\end{align*}
is continuous.
\end{lemma}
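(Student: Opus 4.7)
The plan is to verify the four claims in sequence; the only real technical hurdle is that on $\vadcsp$ the reaction term $uy$ is sign-indefinite, so coercivity of the governing bilinear form has to be engineered via a Sobolev embedding, after which the argument collapses into the implicit function theorem and a bootstrap through \Cref{lem:extlinpde}.

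For the first claim, I would apply the Lax--Milgram lemma on $H_0^1(\domain)$ to
\begin{align*}
a(y,v) \coloneqq \inner[L^2(\domain)^d]{\kappa \nabla y}{\nabla v} + \inner[L^2(\domain)]{uy}{v}, \qquad \ell(v) \coloneqq \inner[L^2(\domain)]{b}{v}.
\end{align*}
Continuity of $a$ is immediate from $\kappa \in L^\infty$, $u \in L^2$ and $H_0^1 \embedding L^4$. For coercivity I would split $u = u_0 + \tilde u$ with $u_0 \in \adcsp$, hence $u_0 \geq 0$, and $\|\tilde u\|_{L^2(\domain)} < \kappa^*_{\inf}/(4 C_{H_0^1;L^4}^2)$; then $\int u_0 y^2 \geq 0$ while $|\int \tilde u y^2| \leq \|\tilde u\|_{L^2} \|y\|_{L^4}^2 < (\kappa^*_{\inf}/4)\|\nabla y\|_{L^2}^2$, whereas $\int \kappa |\nabla y|^2 \geq (\kappa^*_{\inf}/2)\|\nabla y\|_{L^2}^2$. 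The net coercivity constant is at least $\kappa^*_{\inf}/4$ uniformly over $\vadcsp$, and \eqref{eq:stabilityhone} follows from Lax--Milgram after dualizing the right-hand side via the Friedrichs inequality $\|b\|_{H^{-1}(\domain)} \leq C_\domain \|b\|_{L^2(\domain)}$.

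The infinite differentiability I would extract from the implicit function theorem: $A$ is a multilinear polynomial in $(\kappa,y,u)$ plus an affine dependence in $b$, hence jointly $C^\infty$ between the stated spaces, and the partial derivative $\partial_y A$ at any point of $\mathfrak{P}_{\text{ad}}$ acts exactly via the bilinear form $a$ analyzed in Step~1, which is a topological isomorphism $H_0^1(\domain) \to H^{-1}(\domain)$. For the $H^2$ regularity when $u \in \adcsp$, I would recast the PDE as the affine linear problem of \Cref{lem:extlinpde} with shifted source $\tilde b \coloneqq b - uy$, which lies in $L^2(\domain)$ since $u \in L^\infty$ and $y \in L^2$. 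Invoking \eqref{eq:stabilityhtwolin} yields
\begin{align*}
\|y\|_{H^2(\domain)} \leq \frac{C_{H^2}\|\kappa\|_{C^1(\bar\domain)}^3}{(\kappa^*_{\inf}/2)^4}\,\|b-uy\|_{L^2(\domain)},
\end{align*}
and bounding $\|uy\|_{L^2(\domain)} \leq \|\ub\|_{L^\infty(\domain)} C_\domain \|\nabla y\|_{L^2(\domain)}$ and inserting the $H_0^1$-bound from Step~1 gives \eqref{eq:stabilityhtwo} after absorbing constants into the stated prefactor.

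Continuity into $H^2(\domain)$ then follows by combining the $C^\infty$ continuity into $H_0^1(\domain)$ from the implicit function theorem, the uniform $L^\infty$-bound on $\adcsp$ via $\ub$, and $L^2$-convergence of the products $u_n y_n \to uy$ to pass to the limit in the shifted source, after which the continuity statement for the linear resolvent already recorded in \Cref{lem:extlinpde} transfers the convergence to $H^2(\domain)$. I expect the main obstacle to be the coercivity in Step~1: the radius $\kappa^*_{\inf}/(4 C_{H_0^1;L^4}^2)$ used to define $\vadcsp$ is tailored so that the $L^4$-embedding absorbs the indefinite reaction term and leaves a quarter of the diffusion coercivity intact, and every subsequent constant traces back to this choice.
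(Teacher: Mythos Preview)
Your proof is correct and matches the paper's approach: Lax--Milgram with the $L^4$-embedding coercivity trick for existence and \eqref{eq:stabilityhone}, the implicit function theorem for smoothness, and a bootstrap through the linear problem of \Cref{lem:extlinpde} for the $H^2$ statements. The only minor variation is in the $H^2$-continuity step, where you pass $u_ny_n\to uy$ in $L^2$ and invoke the linear resolvent's continuity, while the paper instead writes $y_1-y_2$ as the solution of a bilinear equation with source $-A(\kappa_1,b_1,y_2,u_1)$ and applies \eqref{eq:stabilityhtwo} directly to obtain a quantitative Lipschitz-type bound.
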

\begin{proof}
Given~$(\kappa,b,u) \in \mathfrak{P}_{\text{ad}}$, the existence of a unique solution to~\eqref{eq:bilinearstrong} and the a priori estimate in~\eqref{eq:stabilityhone} follow from the Lax--Milgram lemma and the definitions of $\mathfrak{P}_{\text{ad}}$ and~$\vadcsp$,  see also~\cite[sect.\ 7.1]{Milz2022d}. Similarly, the higher regularity and the estimate in~\eqref{eq:stabilityhtwo} follow by a standard bootstrapping argument. It remains to discuss the regularity of the solution mapping. Starting with the~$H^1_0(\domain)$-result, note that there holds
\begin{align*}
    A_y(\kappa, b, y,u) \delta y= A(\kappa, b, \delta y,0) \quad \text{for all} \quad \delta y \in H^1_0(\domain).
\end{align*}
Together with~\eqref{eq:stabilityhone}, this implies that~$A_y(\kappa, b, y,u)$ is a Banach space isomorphism. Hence, the smoothness of~$y(\kappa,u,b) \in H^1_0(\domain)$ with respect to its inputs follows from the implicit function theorem. In order to verify the~$H^2(\domain)$-continuity, let
\begin{align*}
   (\kappa_j, b_j,u_j) \in \big(\big\{\,\kappa  \in C^1(\bar{\domain})\;|\;\kappa > \kappa^*_{\inf}/2\,\big\} \times L^2(\domain) \times \adcsp \big), \quad j=1,2
\end{align*}
denote two admissible triples and let~$y_j \in H^2(\domain) \cap H^1_0(\domain)$ be the associated solutions of~\eqref{eq:bilinearstrong}. The difference~$\delta y= y_1-y_2 \in H^2(\domain) \cap H^1_0(\domain)  $ is the unique solution of
\begin{align*}
\text{find}~y \in H^1_0(\domain) \colon \quad A(\kappa_1, -A(\kappa_1, b_1, y_2,u_1), y,u_1)=0,
\end{align*}
where we identify
\begin{align*}
    A(\kappa_1, b_1, y_2,u_1)&=A(\kappa_1, b_1, y_2,u_1)-A(\kappa_2, b_2, y_2,u_2) \\&= -\nabla \cdot((\kappa_1-\kappa_2) \nabla y_2)+ (u_1-u_2) y_2- (b_1-b_2) \in L^2(\domain).
\end{align*}
Invoking~\eqref{eq:stabilityhtwo}, we thus get
\begin{align*}
    \norm[H^2(\domain)]{y_1-y_2} \leq
	    \frac{C_{H^2}
	\norm[C^1(\bar{\domain})]{\kappa_1}^3
    }{(\kappa^*_{\inf}/2)^4} \big(\norm[L^\infty(\domain)]{\ub} C^2_D
        +1 \big) \norm[L^2(\domain)]{ A(\kappa_1, b_1, y_2,u_1)},
\end{align*}
and together with
\begin{align*}
    \norm[L^2(\domain)]{ A(\kappa_1, b_1, y_2,u_1)} \leq C \big(\norm[C^1(\bar{\domain})]{\kappa_1-\kappa_2}+\norm[L^2(\domain)]{u_1-u_2}+\norm[L^2(\domain)]{b_1-b_2}\big),
\end{align*}
for some~$C=C(y_2)>0$, the claimed continuity result follows.
\end{proof}

From this point on, we would like to
argue as in the previous section by introducing an appropriate control-to-state operator $S$ in order to define the integrand~$f$. In the bilinear setting, however, this requires additional care, as the equation in~\eqref{eq:bilinearstrong} might not be well-posed for controls in a~$H^{-1}(\domain)$-neighborhood of~$B(\adcsp)$.
Taking this into account, we set~$U_0=\vadcsp$ and consider the mapping
\begin{align*}
    S \colon B(U_0) \times \Xi \to H_0^1(\domain),
    \quad \text{where} \quad S(w,\xi)= y(\kappa(\xi), b(\xi),  B^{-1}_r w).
\end{align*}
Here, $B_r \colon U_0 \to B(U_0)$ is the restriction of the compact operator $B$, and~$B^{-1}_r$ denotes its discontinuous inverse. Using the following lemma, we deduce the Lipschitz continuity of~$S$ with respect to the control.

\begin{lemma}
	\label{lem:multiplication}
 If $\domain \subset \mathbb{R}^d$ is a bounded domain, then there exists a constant
 $C_{H^2; H_0^1}>0$ such that for all
$v \in H_0^1(\domain)$ and $\varphi \in H^2(\domain)$, we have
     $v \varphi \in H_0^1(\domain)$
     and $\norm[H_0^1(\domain)]{v\varphi} \leq C_{H^2; H_0^1} \norm[H_0^1(\domain)]{v}\norm[H^2(\domain)]{\varphi}.
$
\end{lemma}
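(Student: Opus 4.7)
The plan is to verify the claim in three steps: (1) establish that $v\varphi$ has a weak gradient given by the Leibniz rule $\nabla(v\varphi) = \varphi\,\nabla v + v\,\nabla\varphi$, (2) bound the $L^2$-norms of $v\varphi$ and each summand of $\nabla(v\varphi)$ by the asserted right-hand side via Sobolev embeddings, and (3) argue that $v\varphi$ has vanishing trace, so that it belongs to $H^1_0(\domain)$ and not merely to $H^1(\domain)$.

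For step (1), I would first assume $\varphi \in C^\infty(\bar\domain)$ and $v \in C^\infty_c(\domain)$, where the classical product rule applies, and then extend to general $(v,\varphi) \in H^1_0(\domain) \times H^2(\domain)$ via density of smooth functions in both spaces. For step (2), the key inputs are the Sobolev embeddings available in the regime $d \in \{1,2,3\}$, namely $H^2(\domain) \embedding L^\infty(\domain)$ and $H^1(\domain) \embedding L^4(\domain)$ (both valid on bounded Lipschitz domains for $d\leq 3$). Using these together with Hölder's inequality,
\begin{align*}
    \|v\varphi\|_{L^2(\domain)} &\leq \|\varphi\|_{L^\infty(\domain)}\|v\|_{L^2(\domain)} \leq c_1\|\varphi\|_{H^2(\domain)}\|v\|_{H^1_0(\domain)}, \\
    \|\varphi\,\nabla v\|_{L^2(\domain)} &\leq \|\varphi\|_{L^\infty(\domain)}\|\nabla v\|_{L^2(\domain)} \leq c_2\|\varphi\|_{H^2(\domain)}\|v\|_{H^1_0(\domain)}, \\
    \|v\,\nabla\varphi\|_{L^2(\domain)} &\leq \|v\|_{L^4(\domain)}\|\nabla\varphi\|_{L^4(\domain)} \leq c_3\|v\|_{H^1_0(\domain)}\|\varphi\|_{H^2(\domain)},
\end{align*}
where the last line uses $\nabla\varphi \in H^1(\domain)^d \embedding L^4(\domain)^d$. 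Adding these and taking a square root yields the stated inequality with a dimension-dependent constant $C_{H^2;H_0^1}$.

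For step (3), the zero-trace property follows from the density argument in step (1): approximate $v$ in $H^1(\domain)$-norm by a sequence $(v_n) \subset C^\infty_c(\domain)$, and observe that $v_n\varphi \in H^1_0(\domain)$ because $v_n$ has compact support in $\domain$. The bound from step (2) shows $v_n\varphi \to v\varphi$ in $H^1(\domain)$, and since $H^1_0(\domain)$ is closed in $H^1(\domain)$, the limit $v\varphi$ also lies in $H^1_0(\domain)$.

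The only minor technical point is the $v\,\nabla\varphi$ term, since in dimension $d=3$ one cannot use $W^{1,\infty}$-control of $\varphi$ and must instead split the factors into $L^4 \times L^4$; this is harmless once one invokes the $d\leq 3$ restriction from the paper's standing assumptions, and is what determines the dimension-dependence of the constant.
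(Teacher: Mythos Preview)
Your proof is correct and takes a more elementary, self-contained route than the paper's. The paper's proof is a three-line reference: it invokes a general Sobolev multiplication theorem (Theorem~7.4 in Behzadan--Holst) to obtain the $H^1$-estimate $\|v\varphi\|_{H^1}\lesssim \|v\|_{H^1_0}\|\varphi\|_{H^2}$, then appeals to the continuity of the $H^1$-trace operator to conclude that $v\varphi$ has zero trace, and finally uses Friedrichs' inequality to pass from the full $H^1$-norm to the $H^1_0$-seminorm. By contrast, you unpack the multiplication estimate by hand via the Leibniz rule and the embeddings $H^2\hookrightarrow L^\infty$ and $H^1\hookrightarrow L^4$, and you obtain membership in $H^1_0$ through an approximation of $v$ by $C_c^\infty$-functions rather than by invoking the trace operator. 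Both approaches ultimately rest on the same Sobolev embeddings---and hence on $d\leq 3$ and a Lipschitz boundary, which the paper's standing assumption of a bounded polyhedral domain in Section~\ref{sec:bangbangcontro} supplies even though the lemma as stated omits them. Your argument has the advantage of making the constants explicit and requiring no external references; the paper's citation-based proof is terser and places the result within the general theory of Sobolev multipliers.
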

\begin{proof}
This is a consequence of
Theorem~7.4 in \cite{Behzadan2021}, the  $H^1(\domain)$-trace operator's continuity, and Friedrichs' inequality.
\end{proof}
\begin{lemma} \label{lem:contbilin}
If \Cref{assumption:poisson:objective} holds with $\lb(x) \geq 0$
for a.e.\ $x\in \domain$, $w_1,w_2 \in B(\adcsp)$, and $\xi \in \Xi$, then
\begin{align}
 \norm[H^1_0(\domain)]{S(w_1,\xi)-S(w_2,\xi)} \leq C_{H^2; H_0^1}  (4/\kappa_{\inf}^*)    \norm[H^{-1}(\domain)]{w_1 -w_2} \norm[H^2(\domain)]{S(w_1,\xi)}.
\end{align}
\end{lemma}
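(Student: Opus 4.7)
My plan is to derive a weak equation for $\delta y \coloneqq S(w_1,\xi) - S(w_2,\xi)$ by subtracting the two instances of \eqref{eq:bilinearstrong}, test it with $\delta y$ itself, exploit the sign of the lower bound $\lb \geq 0$ to discard the zeroth-order term, and then route the remaining right-hand side through the $H^{-1}/H_0^1$ duality pairing. Concretely, write $u_i \coloneqq B_r^{-1} w_i \in \adcsp$ and $y_i \coloneqq S(w_i,\xi)$; by \Cref{lem:extpdegeneral}, $y_i \in H_0^1(\domain) \cap H^2(\domain)$. Using the decomposition $y_1 u_1 - y_2 u_2 = u_2 \delta y + y_1(u_1-u_2)$, the difference satisfies
\begin{align*}
\inner[L^2(\domain)^d]{\kappa(\xi) \nabla \delta y}{\nabla v} + \inner[L^2(\domain)]{u_2 \delta y}{v} = -\inner[L^2(\domain)]{y_1 (u_1-u_2)}{v}
\end{align*}
for every $v \in H_0^1(\domain)$.

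Testing with $v = \delta y$, the coercivity $\kappa(\xi)\geq \kappa_{\inf}^*$ and the fact that $u_2 \geq \lb \geq 0$ a.e.\ (so $\int_\domain u_2 (\delta y)^2 \, \mathrm{d}x \geq 0$) give
\begin{align*}
\kappa_{\inf}^* \|\delta y\|_{H_0^1(\domain)}^2 \leq -\inner[L^2(\domain)]{u_1-u_2}{y_1 \delta y}.
\end{align*}
The key observation, and what I expect to be the main obstacle, is that the $L^2$-inner product on the right cannot be converted into an $H^{-1}$-estimate of $w_1-w_2$ unless the test function $y_1 \delta y$ actually lies in $H_0^1(\domain)$; this is precisely what \Cref{lem:multiplication} delivers: since $y_1 \in H^2(\domain)$ and $\delta y \in H_0^1(\domain)$, one has $y_1\delta y \in H_0^1(\domain)$ with $\|y_1 \delta y\|_{H_0^1(\domain)} \leq C_{H^2;H_0^1} \|y_1\|_{H^2(\domain)} \|\delta y\|_{H_0^1(\domain)}$.

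Using $B = \imath^*$ and $B(u_1-u_2) = w_1-w_2$, the right-hand side may then be rewritten as the duality pairing $\dualpHzeroone{w_1-w_2}{y_1\delta y}$ and estimated via
\begin{align*}
\big|\inner[L^2(\domain)]{u_1-u_2}{y_1 \delta y}\big| \leq \|w_1-w_2\|_{H^{-1}(\domain)} \, C_{H^2;H_0^1}\|y_1\|_{H^2(\domain)}\|\delta y\|_{H_0^1(\domain)}.
\end{align*}
Dividing by $\kappa_{\inf}^* \|\delta y\|_{H_0^1(\domain)}$ (the case $\delta y = 0$ being trivial) yields the bound with constant $C_{H^2;H_0^1}/\kappa_{\inf}^*$, which is absorbed into the looser $C_{H^2;H_0^1}(4/\kappa_{\inf}^*)$ stated in the lemma, consistent with the stability constant in \Cref{lem:extpdegeneral}.
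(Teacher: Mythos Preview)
Your proof is correct and follows essentially the same route as the paper: both write the difference $\delta y$ as the solution of the bilinear equation with coefficient $u_2$ and right-hand side $(u_2-u_1)y_1$, and both use \Cref{lem:multiplication} to pass from $\inner[L^2(\domain)]{u_1-u_2}{y_1 v}$ to $\norm[H^{-1}(\domain)]{w_1-w_2}\,\norm[H^2(\domain)]{y_1}\,\norm[H_0^1(\domain)]{v}$. The only cosmetic difference is that the paper invokes the stability estimate \eqref{eq:stabilityhone} from \Cref{lem:extpdegeneral} (yielding the constant $4/\kappa_{\inf}^*$ built for all of $\mathfrak{P}_{\text{ad}}$), whereas you redo the coercivity argument inline with $v=\delta y$, $u_2 \geq \lb \geq 0$, and $\kappa(\xi)\geq\kappa_{\inf}^*$, which gives the sharper factor $1/\kappa_{\inf}^*$ that you then absorb.
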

\begin{proof}
    Let~$u_1, u_2 \in\adcsp$ be such that~$w_j=Bu_j$,~$j=1,2$. Since~$b(\xi) \in L^2(\domain)$, we have~$S(Bu_1,\xi) \in H^2(\domain)$, see \Cref{lem:extpdegeneral}.
    Consequently, see \Cref{lem:multiplication},~$(u_2-u_1) S(Bu_1,\xi)$ induces a linear continuous functional on~$H^1_0(\domain)$ with
    \begin{align*}
        \dualpHzeroone[\domain]{(u_2-u_1) S(w_1,\xi)}{v} &= \int_\domain (u_2-u_1) S(B u_1,\xi) v ~\mathrm{d}x
        \\& \leq \norm[H^{-1}(\domain)]{w_1 -w_2} \norm[H_0^1(\domain)]{vS(B u_1,\xi)} \\
        & \leq C_{H^2; H_0^1} \norm[H^{-1}(\domain)]{w_1 -w_2} \norm[H^2(\domain)]{S(w_1,\xi)} \norm[H_0^1(\domain)]{v}
    \end{align*}
    for all~$v \in H^1_0(\domain)$. The claimed result then follows from~\eqref{eq:stabilityhone} noting that
    \begin{align*}
       S(w_1, \xi)-S(w_2, \xi)=y(\kappa(\xi),(u_2-u_1)S(w_1,\xi), u_2 ).
    \end{align*}
\end{proof}

The following lemma verifies \Cref{assumption:objective_consistency} for the integrand
$f$ in \eqref{eq:integrandpdeex}.
\begin{lemma} \label{lem:ass1bilinear}
Let \Cref{assumption:poisson:objective} hold with $\lb(x) \geq 0$
for a.e.\ $x\in \domain$.
    Then
    \begin{align*}
        0 \leq f(Bu, \xi) \leq \varrho\big( C^2_D (4/\kappa_{\inf}^*) \norm[L^2(\domain)]{b(\xi)} \big) \quad \text{for all} \quad (u,\xi) \in U_0 \times \Xi.
    \end{align*}
    Moreover, $f $ is continuous
        in its first argument
        on $B(\adcsp)$
        for each $\xi \in \Xi$
        and measurable in its second
        argument on $\Xi$ for each $u \in B(\csp_0)$.
\end{lemma}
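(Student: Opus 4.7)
The lemma splits into three essentially independent claims, each of which my plan would reduce to composing facts already proved about $S$, $J$, and $B$. I would attack them separately: the explicit bound, the continuity of $f(\cdot,\xi)$ on $B(\adcsp)$, and the measurability of $f(w,\cdot)$.

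For the upper bound, my plan is to chain three estimates. Nonnegativity is immediate because $J$ is nonnegative by \Cref{itm:application_objective_lipschitz_gradient}. The remaining ingredients are the growth bound $J(v)\leq\varrho(\norm[L^2(\domain)]{v})$ from the same assumption, the operator-norm estimate $\norm[L^2(\domain)]{B^*y}\leq C_\domain \norm[H^1_0(\domain)]{y}$ (since $B^*=\imath$ has operator norm $C_\domain$), and the a priori estimate \eqref{eq:stabilityhone} applied to $S(Bu,\xi)=y(\kappa(\xi),b(\xi),u)$. Combining these and using the monotonicity of $\varrho$ then delivers the claimed bound
\[
f(Bu,\xi)\leq \varrho\bigl(C_\domain^2(4/\kappa_{\inf}^*)\norm[L^2(\domain)]{b(\xi)}\bigr).
\]

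For the continuity claim I would invoke \Cref{lem:contbilin}, which already establishes that $S(\cdot,\xi)\colon B(\adcsp)\to H^1_0(\domain)$ is Lipschitz with respect to the $H^{-1}(\domain)$-topology on the source (with constant depending on $\norm[H^2(\domain)]{S(w,\xi)}$, which is finite by \Cref{lem:extpdegeneral}). Composing this with the bounded linear map $B^*\colon H^1_0(\domain)\to L^2(\domain)$ and with the (Lipschitz) continuous $J\colon L^2(\domain)\to[0,\infty)$ from \Cref{itm:application_objective_lipschitz_gradient} yields continuity of $f(\cdot,\xi)$ on $B(\adcsp)$.

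For measurability at a fixed $w\in B(U_0)$, my plan is to combine the measurability of the data with the continuity of the solution map. The maps $\xi\mapsto\kappa(\xi)\in C^1(\bar\domain)$ and $\xi\mapsto b(\xi)\in L^2(\domain)$ are measurable by \Cref{assumption:poisson:objective}, and the bound $\kappa(\xi)\geq\kappa_{\inf}^*>\kappa_{\inf}^*/2$ places $(\kappa(\xi),b(\xi),B^{-1}_r w)$ in $\mathfrak{P}_{\text{ad}}$ for every $\xi$. Since \Cref{lem:extpdegeneral} provides continuity of $(\kappa,b,u)\mapsto y(\kappa,b,u)\in H^1_0(\domain)$, a standard composition argument shows that $\xi\mapsto S(w,\xi)\in H^1_0(\domain)$ is measurable, and a further composition with the continuous $B^*$ and $J$ yields the claim. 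The only delicate point I anticipate is the discontinuity of $B^{-1}_r$, but this is immaterial: for measurability, $w$ is fixed and so is $B^{-1}_r w$, and for continuity we route through \Cref{lem:contbilin}, which quantifies the $w$-dependence of $S$ in the $H^{-1}(\domain)$-norm without ever inverting $B$ in a continuous fashion.
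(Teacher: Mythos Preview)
Your proposal is correct and mirrors the paper's own (very terse) proof: the bound is obtained exactly as you describe via \Cref{itm:application_objective_lipschitz_gradient} and the a priori estimate \eqref{eq:stabilityhone}, continuity in $w$ comes from \Cref{lem:contbilin} composed with $B^*$ and $J$, and measurability in $\xi$ follows from the continuity of $(\kappa,b,u)\mapsto y(\kappa,b,u)$ in \Cref{lem:extpdegeneral} together with the measurability of $\kappa(\cdot)$ and $b(\cdot)$. Your remark that the discontinuity of $B_r^{-1}$ is harmless here is spot on.
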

\begin{proof}
    The measurability and continuity statements follow from the continuity of~$J$
    and that of $(\kappa,b,u) \mapsto y(\kappa,b,u)$ (see \Cref{lem:extpdegeneral}), \Cref{itm:application_objective_lipschitz_gradient},  and the Lipschitz continuity of~$S$ for fixed~$\xi \in \Xi$. The stability estimate follows similarly to \Cref{lem:asslinear} by using~\eqref{eq:stabilityhone}.
\end{proof}

Next, we verify \Cref{assumption:gradient_consistency,assumption:Lipschitz_continuity}. We define
\begin{align*}
    C_{\mathrm{sq}} \coloneqq
    \sqrt{2 \ell \varrho\big( C^2_D (4/\kappa_{\inf}^*) b_{\sup}^*\big)}.
\end{align*}
\begin{lemma} \label{lem:ass2bilinear}
Let \Cref{assumption:poisson:objective} hold with $\lb(x) \geq 0$
for a.e.\ $x\in \domain$.
For each $\xi \in \Xi$, consider the mapping
\begin{align*}
g_\xi \colon U_0 \to \mathbb{R}, \quad \text{where} \quad     g_\xi (u) \coloneqq f(Bu,\xi)= J( B^* y(\kappa(\xi), b(\xi),u)).
\end{align*}
The function~$g_\xi$ is continuously differentiable on~$U_0$, and there holds
 \begin{align*}
     \nabla g_\xi (u) =-p_{u,\xi} S(Bu,\xi) \in L^2(\domain), \quad \text{where} \quad p_{u,\xi}\coloneqq y(\kappa(\xi),\nabla J(B^* S(Bu,\xi)), u).
 \end{align*}
 Moreover, for every $(u,\xi) \in \adcsp \times \Xi$, we have~$p_{u,\xi} S(Bu,\xi) \in H^1_0(\domain)$, the mapping
\begin{align*}
\Du_w f \colon B(\adcsp) \times \Xi \to H^1_0(\domain), \quad \text{where} \quad\Du_w f(w, \xi)\coloneqq-p_{u,\xi} S(w,\xi),
\end{align*}
is Carath\'eodory, and we have the a priori estimates
\begin{align*}
             \norm[L^2(\domain)]{\nabla g_\xi (u)} \leq  C^2_{H^1_0; L^4} C^2_D (4/\kappa_{\inf}^*)^2  C_{\mathrm{sq}}\norm[L^2(\domain)]{b(\xi)} \quad \text{for all} \quad (u,\xi)\in U_0 \times \Xi,
 \end{align*}
as well as for all $(u,\xi) \in \adcsp \times \Xi$,
    \begin{align*}
        \norm[H^1_0(\domain)] {\Du_w f(Bu,\xi)} & \leq C_{H^2; H_0^1}C_D (4/\kappa_{\inf}^*) \frac{C_{H^2}
	\norm[C^1(\bar{\domain})]{\kappa(\xi)}^3
    }{(\kappa^*_{\inf}/2)^4} \big(\norm[L^\infty(\domain)]{\ub} C^2_D
        +1 \big)   C_{\mathrm{sq}}\norm[L^2(\domain)]{b(\xi)}.
    \end{align*}
    Moreover, there exists an integrable random variable~$L_{\Du \rpobj} \colon \Xi \to [0,\infty)$ such that
    \begin{align*}
          \norm[L^2(\domain)]{\nabla g_\xi (u_1)-\nabla g_\xi (u_2)}  \leq L_{\Du \rpobj}(\xi)  \norm[H^{-1}(\domain)]{Bu_1 -Bu_2} \quad \text{for all} \quad u_1,u_2 \in \adcsp,
          \quad \xi \in \Xi.
    \end{align*}
\end{lemma}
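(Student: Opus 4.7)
Differentiability of $g_\xi$ on $U_0 = \vadcsp$ is immediate from the chain rule, since \Cref{lem:extpdegeneral} asserts that $u \mapsto y(\kappa(\xi), b(\xi), u)$ is smooth from $U_0$ into $H^1_0(\domain)$, $B^*$ is linear and bounded, and $J$ is continuously differentiable on $L^2(\domain)$. For the gradient representation, I plan to use the standard adjoint argument: linearizing $A(\kappa(\xi), b(\xi), y, u) = 0$ in direction $h \in L^2(\domain)$ yields $A_y(\kappa(\xi), b(\xi), y, u)\delta y = -hy$, and exploiting the symmetry of the bilinear form in $y$ and the test function (which makes $A_y$ formally self-adjoint) gives
\[
   \nabla g_\xi(u) \cdot h = \inner[L^2(\domain)]{\nabla J(B^* S(Bu,\xi))}{B^*\delta y} = -\inner[L^2(\domain)]{p_{u,\xi} S(Bu,\xi)}{h},
\]
with $p_{u,\xi} \coloneqq y(\kappa(\xi), \nabla J(B^*S(Bu,\xi)), u)$, which is well-defined because $\nabla J(B^*S(Bu,\xi)) \in L^2(\domain)$.

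The a priori estimates would then follow by combining Sobolev embeddings with the stability bounds of \Cref{lem:extpdegeneral}. For the $L^2$-estimate, the embedding $H^1_0(\domain) \embedding L^4(\domain)$ yields $\norm[L^2(\domain)]{p_{u,\xi} S(Bu,\xi)} \leq C_{H_0^1;L^4}^2 \norm[H_0^1(\domain)]{p_{u,\xi}}\norm[H_0^1(\domain)]{S(Bu,\xi)}$, and both $H^1_0$-factors are controlled by~\eqref{eq:stabilityhone}, while $\norm[L^2(\domain)]{\nabla J(B^*S(Bu,\xi))}$ is bounded by~$C_{\text{sq}}$ via~\eqref{eq:stability_estimate_gradient} and~\eqref{eq:stabilityhone} applied to $S(Bu,\xi)$. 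When $u \in \adcsp$, \eqref{eq:stabilityhtwo} additionally places $S(Bu,\xi)$ in $H^2(\domain)$, so \Cref{lem:multiplication} upgrades the product to $H^1_0(\domain)$ with $\norm[H_0^1(\domain)]{p_{u,\xi} S(Bu,\xi)} \leq C_{H^2;H_0^1}\norm[H_0^1(\domain)]{p_{u,\xi}}\norm[H^2(\domain)]{S(Bu,\xi)}$, and inserting~\eqref{eq:stabilityhone} and~\eqref{eq:stabilityhtwo} delivers the stated bound. The Carath\'eodory property of $\Du_w f$ follows from the joint continuity statements in \Cref{lem:extpdegeneral,lem:contbilin} together with the measurability of $\kappa$ and $b$ from \Cref{assumption:poisson:objective} and the continuity of $\nabla J$, $B^*$, and multiplication.

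The Lipschitz estimate will be the main obstacle. I would split
\[
\nabla g_\xi(u_1) - \nabla g_\xi(u_2) = -(p_{u_1,\xi} - p_{u_2,\xi})S(Bu_1,\xi) - p_{u_2,\xi}\bigl(S(Bu_1,\xi) - S(Bu_2,\xi)\bigr)
\]
and bound each summand in $L^2(\domain)$ via $L^4$-$L^4$. The second summand absorbs the factor $\norm[H^{-1}(\domain)]{Bu_1-Bu_2}$ directly through \Cref{lem:contbilin} and~\eqref{eq:stabilityhtwo}. For the first, the difference $p_{u_1,\xi} - p_{u_2,\xi}$ satisfies an instance of~\eqref{eq:bilinearstrong} with reaction coefficient $u_1$ and right-hand side $\nabla J(B^*S(Bu_1,\xi)) - \nabla J(B^*S(Bu_2,\xi)) - (u_1 - u_2)p_{u_2,\xi}$. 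The first piece is $L^2$-Lipschitz in $S(Bu_1,\xi)-S(Bu_2,\xi)$ by the Lipschitz continuity of $\nabla J$ (see \Cref{itm:application_objective_lipschitz_gradient}), which via \Cref{lem:contbilin} translates into $\norm[H^{-1}(\domain)]{Bu_1-Bu_2}$. The bilinear ``reaction'' perturbation is estimated in $H^{-1}(\domain)$ by duality: for $v \in H_0^1(\domain)$, $\int(u_1-u_2)p_{u_2,\xi}v\,\mathrm{d}x \leq \norm[H^{-1}(\domain)]{Bu_1-Bu_2}\norm[H_0^1(\domain)]{p_{u_2,\xi}v}$, and \Cref{lem:multiplication} combined with $p_{u_2,\xi} \in H^2(\domain)$ (from \eqref{eq:stabilityhtwo}) yields $\norm[H^{-1}(\domain)]{(u_1-u_2)p_{u_2,\xi}} \leq C_{H^2;H_0^1}\norm[H^2(\domain)]{p_{u_2,\xi}}\norm[H^{-1}(\domain)]{Bu_1-Bu_2}$. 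A final application of~\eqref{eq:stabilityhone} to bound $\norm[H_0^1(\domain)]{p_{u_1,\xi} - p_{u_2,\xi}}$ assembles an integrable random variable $L_{\Du f}(\xi) \lesssim (1 + \norm[C^1(\bar\domain)]{\kappa(\xi)}^6)\norm[L^2(\domain)]{b(\xi)}^2$, whose integrability is ensured by \Cref{assumption:poisson:objective} and the moment condition in~\Cref{assumption:poisson}.
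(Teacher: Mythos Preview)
Your proposal is correct and follows essentially the same route as the paper: chain rule plus adjoint calculus for the gradient, the $H^1_0 \hookrightarrow L^4$ and $H^2 \cdot H^1_0 \subset H^1_0$ product estimates for the a~priori bounds, and a splitting of $p_{u_1,\xi}-p_{u_2,\xi}$ combined with \Cref{lem:contbilin} for the Lipschitz estimate. The only cosmetic differences are that the paper inserts the intermediate adjoint $y(\kappa(\xi),\nabla J(B^*S(Bu_2,\xi)),u_1)$ rather than writing the PDE for the difference directly, and it bounds $\nabla g_\xi(u_1)-\nabla g_\xi(u_2)$ in $H^1_0(\domain)$ via \Cref{lem:multiplication} before invoking Friedrichs' inequality, whereas you go straight to $L^2(\domain)$ via an $L^4$--$L^4$ estimate.
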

\begin{proof}
The statement on the differentiability of~$g_\xi$ is a consequence of the chain rule  and  adjoint calculus. If~$u \in \adcsp $, then \Cref{lem:extpdegeneral} implies~$S(Bu, \xi) \in H^2(\domain) \cap H^1_0(\domain)$. Hence, \Cref{lem:multiplication} ensures $p_{u,\xi} S(Bu, \xi) \in H^1_0(\domain)$. The Carath\'eodory property of~$\Du_w f$ follows from the continuity of~$y$ and~$\nabla J$.
Invoking the stability estimates of \Cref{lem:extpdegeneral}, we further obtain
    \begin{align*}
        \norm[H^1_0(\domain)]{p_{u,\xi}} &\leq  C_D (4/\kappa_{\inf}^*)\norm[L^2(\domain)]{\nabla J(B^* S(Bu,\xi))} \leq  C_D (4/\kappa_{\inf}^*)  \sqrt{2 \ell J(B^* S(Bu,\xi)))}
    \end{align*}
    for all~$(u,\xi)\in U_0 \times \Xi$, and
    \begin{align} \label{eq:hzwop}
        \norm[H^2(\domain)]{p_{u,\xi}} \leq  \frac{C_{H^2}
	\norm[C^1(\bar{\domain})]{\kappa(\xi)}^3
    }{(\kappa^*_{\inf}/2)^4} \big(\norm[L^\infty(\domain)]{\ub} C^2_D
        +1 \big) C_{\mathrm{sq}}
        \quad \text{for all}
        \quad (u,\xi) \in \adcsp \times \Xi.
    \end{align}
    As a consequence, we obtain
    \begin{align*}
        \norm[L^2(\domain)]{\nabla g_\xi (u)} &\leq  \norm[L^4(\domain)]{p_{u,\xi}} \norm[L^4(\domain)]{S(Bu,\xi)} \leq C^2_{H^1_0; L^4} \norm[H^1_0(\domain)]{p_{u,\xi}} \norm[H^1_0(\domain)]{S(Bu,\xi)}
\\&\leq
        C^2_{H^1_0; L^4} C^2_D (4/\kappa_{\inf}^*)^2  C_{\mathrm{sq}}\norm[L^2(\domain)]{b(\xi)}
    \end{align*}
    for all~$(u,\xi) \in  U_0 \times \Xi$. Moreover, if $(u,\xi) \in \adcsp \times \Xi$, then
    \begin{align*}
        &\norm[H^1_0(\domain)]{\Du_w f(Bu,\xi)} \leq C_{H^2; H_0^1}\norm[H^2(\domain)]{S(Bu,\xi)} \norm[H^1_0(\domain)]{p_u}
        \\ &\quad \leq C_{H^2; H_0^1}C_D (4/\kappa_{\inf}^*) \frac{C_{H^2}
	\norm[C^1(\bar{\domain})]{\kappa(\xi)}^3
    }{(\kappa^*_{\inf}/2)^4} \big(\norm[L^\infty(\domain)]{\ub} C^2_D
        +1 \big)   C_{\mathrm{sq}}\norm[L^2(\domain)]{b(\xi)}.
    \end{align*}
    In particular, owing to \Cref{assumption:poisson:objective}, \eqref{eq:stabilityhtwo} and~\eqref{eq:hzwop}, there exists a square integrable random variable~$\zeta_{\text{max}} \colon \Xi \to [0,\infty)$ such that
    \begin{align*}
    \norm[L^2(\domain)]{\nabla g_\xi(u)} \leq \zeta_{\text{max}}(\xi) \quad \text{for all} \quad (u,\xi)\in U_0 \times \Xi,
    \end{align*}
    as well as
    \begin{align*}
        \max \big\{\norm[H^1_0(\domain)]{\Du_w f(Bu,\xi)}, \norm[H^2(\domain)]{S(Bu,\xi)}, \norm[H^2(\domain)]{p_{u,\xi}} \big\} \leq \zeta_{\text{max}}(\xi) \;\text{for all}
        \; (u,\xi) \in \adcsp \times \Xi.
    \end{align*}

    To establish the claimed Lipschitz continuity, fix~$u_1, u_2 \in \adcsp$ and $\xi \in \Xi$, and split
    \begin{align*}
        p_{u_1, \xi}-p_{u_2,\xi}= p_{u_1, \xi}-y(\kappa(\xi),\nabla J(B^* S(Bu_2,\xi)), u_1)+y(\kappa(\xi),\nabla J(B^* S(Bu_2,\xi)), u_1)-p_{u_2,\xi}.
    \end{align*}
    For estimating the norm of the first difference, we note that
    \begin{align*}
        \norm[L^2(D)]{\nabla J(B^* S(Bu_1,\xi))-\nabla J(B^* S(Bu_2,\xi))} &\leq \ell C_D \norm[H^1_0(\domain)]{S(Bu_1,\xi)-S(Bu_2,\xi)} \\ &\leq \frac{4\ell C_D C_{H^2; H_0^1} \zeta_{\text{max}}(\xi)}{\kappa_{\inf}^*} \norm[H^{-1}(\domain)]{Bu_1 -Bu_2}
    \end{align*}
    using \Cref{lem:contbilin}. Similarly, arguing along the lines of proof in \Cref{lem:contbilin}, we get
    \begin{align*}
        \norm[H^1_0(\domain)]{y(\kappa(\xi),\nabla J(B^* S(Bu_2,\xi)), u_1)-p_{u_2,\xi}} \leq C_{H^2; H_0^1}  (4/\kappa_{\inf}^*)    \norm[H^{-1}(\domain)]{Bu_1 -Bu_2} \zeta_{\text{max}}(\xi).
    \end{align*}
    As a consequence, we have
    \begin{align*}
        \norm[H^1_0(D)]{p_{u_1}-p_{u_2}} \leq  C_{H^2; H_0^1}  (4/\kappa_{\inf}^*) \zeta_{\text{max}}(\xi)\big(     1+\ell C^2_D  (4/\kappa_{\inf}^*)   \big)  \norm[H^{-1}(\domain)]{Bu_1 -Bu_2}.
    \end{align*}
    Finally, we decompose~$\nabla g_\xi (u_1)-\nabla g_\xi (u_2)$ and estimate
    \begin{align*}
        \norm[H^1_0(\domain)]{\nabla g_\xi (u_1)-\nabla g_\xi (u_2)}  &\leq \big(\norm[H^1_0(\domain)]{p_{u_1}-p_{u_2}}  + \norm[H^1_0(\domain)]{S(Bu_1,\xi)-S(Bu_2,\xi)} \big) \zeta_{\text{max}}(\xi)
        \\ & \leq C \zeta_{\text{max}}(\xi)^2  \norm[H^{-1}(\domain)]{Bu_1 -Bu_2}
    \end{align*}
    for some~$C>0$. By Friedrichs' inequality, the claimed Lipschitz statement follows.
\end{proof}

\Cref{lem:ass2bilinear}
verifies \Cref{assumption:gradient_consistency,assumption:Lipschitz_continuity}.
Moreover, as in the linear setting, \Cref{assumption:subgaussian_gradients} is satisfied since~$\nabla g_\xi$ is uniformly bounded on~$\adcsp \times \Xi$.

\section{Numerical illustrations}
\label{sec:numerics}
We present numerical illustrations for two instances of the risk-neutral problems analyzed
in \Cref{sec:bangbangcontrol}. The section's main objective is to illustrate
the theoretical results established in \Cref{subsect:sample_size_estimates,subsec:expectationboundsconvexproblems}.
Before presenting numerical results, we discuss problem data,
discretization aspects, implementation details, and the computation of reference solutions
and gap functions.

\paragraph{Problem data}
We chose  \(\domain = (0,1)^2\),
$b(x) \coloneqq 10 \sin(2 \pi  x_1-x_2) \cos(2 \pi x_2)$ in \eqref{eq:bilinear}, and
$J(v) \coloneqq (1/2) \|v-y_d\|_{L^2(\domain)}^2$
in \eqref{eq:riskneutralpdeproblem} with
\(y_d(x) \coloneqq \sin(2 \pi x_1) \sin(2 \pi x_2) \exp(2 x_1)/6\). For the random diffusion coefficient \(\kappa\), we used a truncated Karhunen--Lo\`eve expansion with the eigenpairs described in Example~7.56 of \cite{Lord2014}. Our truncated expansion includes $100$ addends, uses a correlation length of $1$,
and independent truncated normal random variables, each without shift, a standard deviation \(1\), and a truncation interval of \([-3, 3]\). The remaining problem data are problem-dependent and introduced in \Cref{subsubsect:Linear,subsubsect:bilinear}.

\paragraph{Reference solutions and gap functions}
As the risk-neutral problems in \Cref{subsect:bangbanglinear} may lack closed-form solutions,
we compute reference solutions. We constructed $N_{\text{ref}} \coloneqq 8192$
samples
using a scrambled Sobol' sequence,
generated using \texttt{SciPy}'s quasi-Monte Carlo engine \cite{Roy2023}.
These samples are transformed via the inverse transformation method
and their subsequent values referred to as reference samples.
The authors chose to use a scrambled Sobol' sequence instead of Monte Carlo samples to generate the reference samples. This choice was not guided by theoretical error estimates such as those in Theorem~1 of \cite{Owen1998}, but rather by empirical observations made by one of the authors.
We used the reference samples to define a reference problem.
This reference problem is an SAA problem defined by the reference samples
with $N = N_{\text{ref}}$. We refer to its solutions/critical points
as reference SAA solutions/critical points.
Moreover, we approximate the gap functional $\Psi$
in \eqref{def:gapfunctional} using
the gap functional $\Psi_{\text{ref}}$ corresponding to
the reference SAA problem. Similarly, we approximate the objective function
$\objective$ via $\objective_{\text{ref}}$.

\paragraph{Discretization and implementation details}
To obtain finite-dimensional optimization problems, we discretized the state space \(H_0^1(\domain)\) using piecewise linear continuous finite element functions and \(L^2(\domain)\) using piecewise constant functions. These functions are defined on a regular triangulation of \(\domain\) with  \(n \coloneqq 64\) cells in each direction.
We refer to
\begin{align*}
    \min_{u \in \csp}\, f(Bu, \E{\boldsymbol{\xi}}) + \psi(u)
\end{align*}
as the nominal problem to \eqref{eq:nonconvexriskneutral}.
We solved the discretized problems using a conditional gradient method \cite{Kunisch2021}. Our implementation  is archived at \cite{Milz2024a}. The method stops when the gap functional falls below \(10^{-10}\) or after 100 iterations, whichever comes first.
Our simulation environment is built on \texttt{dolfin-adjoint} \cite{Funke2013,Mitusch2019}, \texttt{FEniCS} \cite{Alnes2015,Logg2012}, and \texttt{Moola} \cite{Nordaas2016}. The numerical simulations, except those for the nominal problems, were performed on the PACE Phoenix cluster \cite{PACE2017}.
The code and simulation output are archived at \cite{johannes_milz_2024_13145219}.

For each problem, we display both the nominal solutions and the SAA reference solutions side by side. These graphical illustrations indicate that nominal and stochastic solutions may exhibit different characteristics.
This presentation is not intended as a direct  
comparison between nominal and SAA reference solutions.

\subsection{Affine-linear problem}
\label{subsubsect:Linear}

We chose  $\lb = -1$, $\ub = 1$, and
$\beta = 0.0075$.
\Cref{fig:convex:nominal_reference_solutions} depicts
a nominal solution and a reference SAA solution.
\Cref{fig:convex:convergence_rates_objective_distance} depicts convergence
rates for empirical estimates of the optimality gap $\E{\objective_{\text{ref}}(u_N^*)-\objective_{\text{ref}}(u^*)}$ and
$\E{\|u_{N}^*-u^*\|_{L^1(\domain)}}$ over the sample size $N$.
We used $40$ realizations to estimate these means, and computed
convergence rates using least squares.
\Cref{fig:convex:convergence_rates_gap} depicts convergence
rates of the SAA gap function's empirical means.
The empirical convergence rates closely match the theoretical ones.

\begin{figure}[t!]
	\centering
		\subfloat{%
		\includegraphics[width=0.400\textwidth]
		{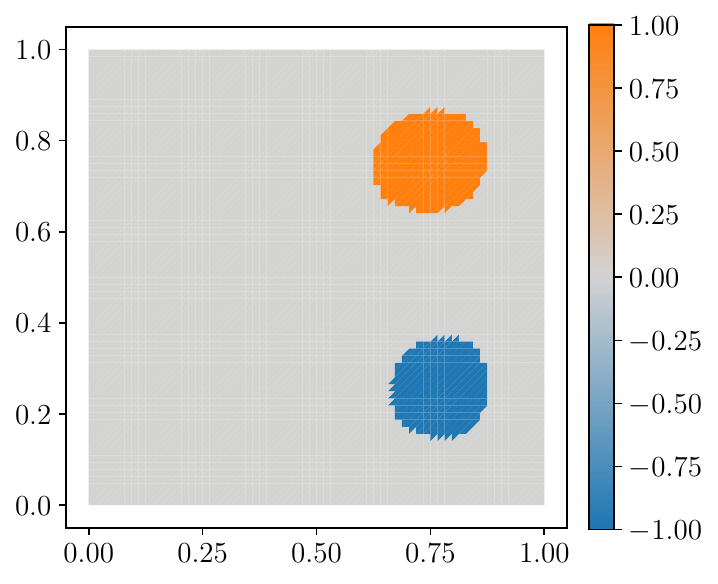}
        }
	\subfloat{%
		\includegraphics[width=0.400\textwidth]
		{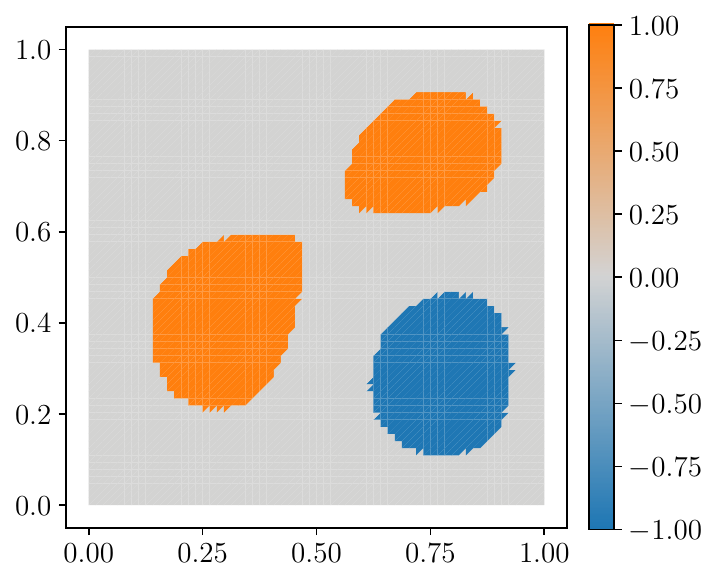}
        }
	\caption{
	For the affine-linear control
    problem,
	nominal solution \emph{(left)}
    and reference SAA solution $u^*$
    with $N = N_{\text{ref}}$ \emph{(right)}.
    }
	\label{fig:convex:nominal_reference_solutions}
\end{figure}

\begin{figure}[t!]
	\centering
	\subfloat{%
		\includegraphics[width=0.400\textwidth]
		{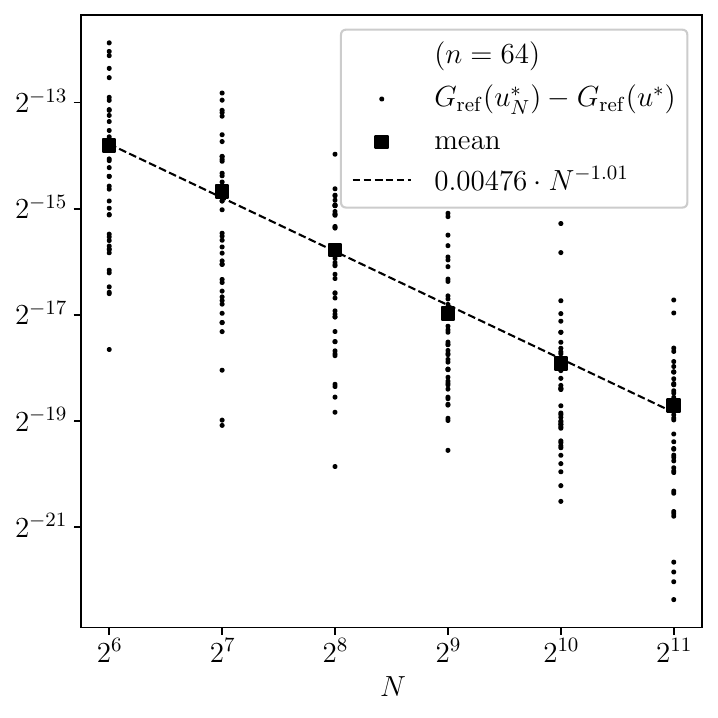}
        }
	\subfloat{%
		\includegraphics[width=0.400\textwidth]
		{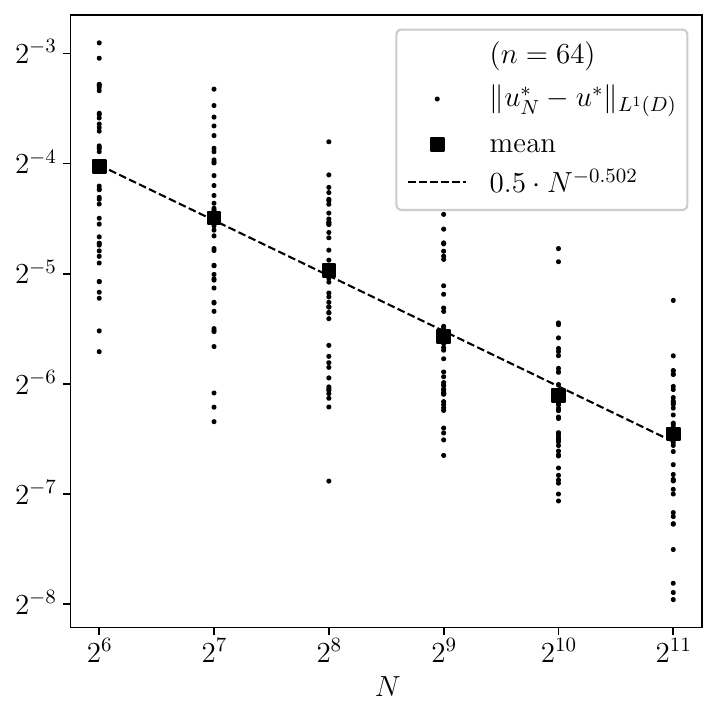}
        }
	\caption{%
		For the affine-linear control
    problem,
	empirical estimate of
    $\E{\objective_{\text{ref}}(u_N^*)-\objective_{\text{ref}}(u^*)}$
    over $N$
    \emph{(left)} and empirical estimate of
    $\E{\norm[L^1(\domain)]{u_{N}^*-u^*}}$
     over $N$
    \emph{(right)}.}
	\label{fig:convex:convergence_rates_objective_distance}
\end{figure}

\begin{figure}[t]
	\centering
	\subfloat{%
		\includegraphics[width=0.400\textwidth]
		{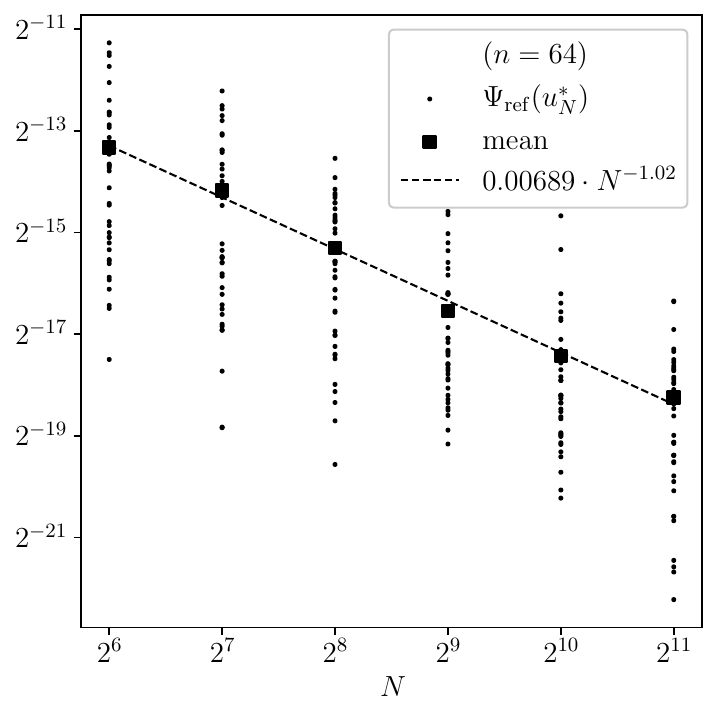}
    }
    \subfloat{%
    	\includegraphics[width=0.400\textwidth]
    	{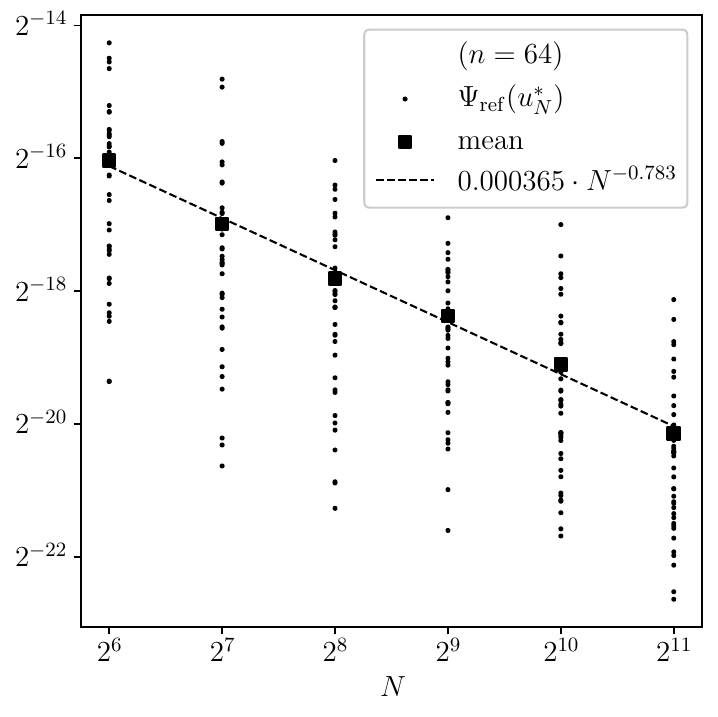}
    }
	\caption{%
  	Empirical estimate of
    $\E{\Psi_{\text{ref}}(u_{N}^*)}$
    over $N$
    for the affine-linear control problem \emph{(left)}
    and bilinear control problem \emph{(right)}.}
	\label{fig:convex:convergence_rates_gap}
\end{figure}

\subsection{Bilinear problem}
\label{subsubsect:bilinear}

We chose the constant control bounds $\lb = 0$ and $\ub = 1$, and $\beta = 0.00055$.
\Cref{fig:nonconvex:nominal_reference_solutions} depicts
nominal critical points and a reference SAA critical point.
\Cref{fig:convex:convergence_rates_gap} depicts convergence
rates of the SAA gap function's empirical means. These rates are faster than predicted by the theory, see \Cref{cor:sample_size_estimates_gap_function}. We think that this can be attributed to the fact that the covering number approach does not exploit higher-order regularity of the integrand and, in particular, potential local curvature around isolated minimizers. While a closer inspection of this improved convergence behavior is certainly of interest, it goes beyond the scope of the current work and is left for future research.

\begin{figure}[t!]
	\centering
	\subfloat{%
		\includegraphics[width=0.400\textwidth]
		{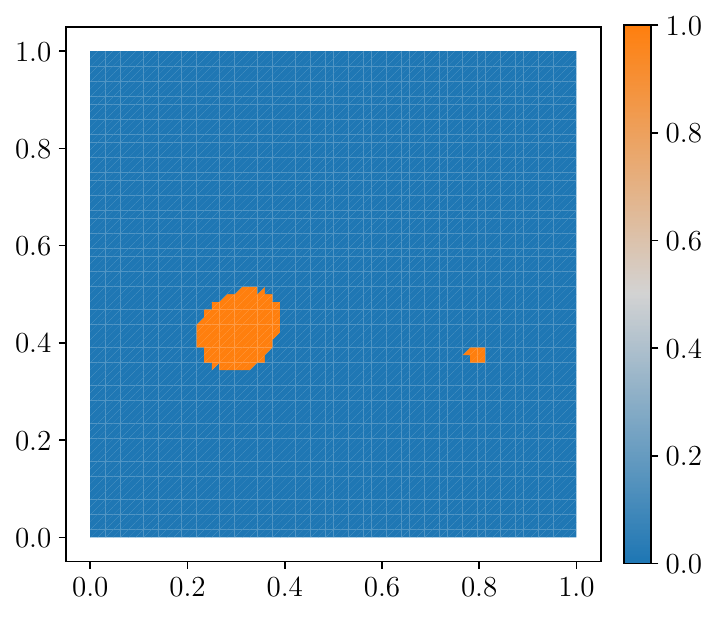}
    }
	\subfloat{%
		\includegraphics[width=0.400\textwidth]
		{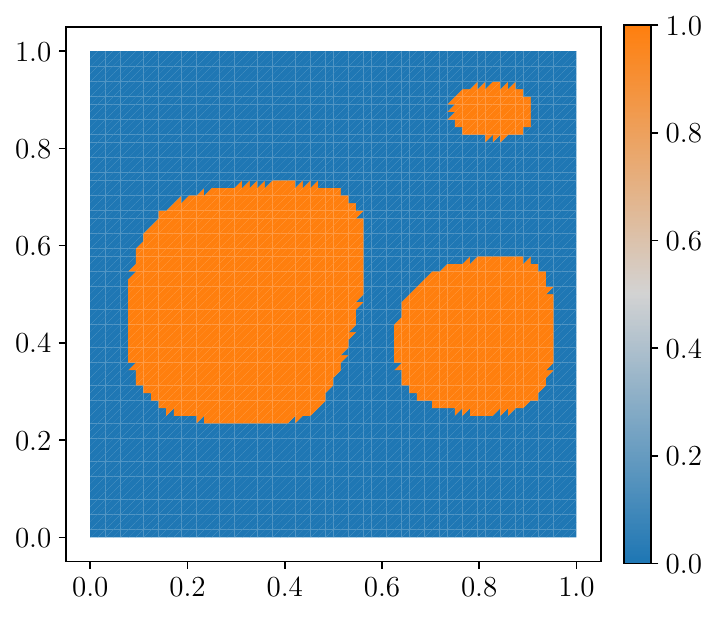}
    }
	\caption{%
    For the bilinear control
    problem,
	nominal critical point \emph{(left)}
    and reference SAA critical point $u^*$
    with $N = N_{\text{ref}}$ \emph{(right)}.
    }
	\label{fig:nonconvex:nominal_reference_solutions}
\end{figure}

\section{Discussion}
We have analyzed the SAA approach for risk-neutral optimization problems with nonsmooth but convex regularization terms. Our main results address the asymptotic consistency of this scheme, and the derivation of nonasymptotic sample size estimates for various optimality measures. The latter, as well as the employed techniques, come in two different flavors: for the general nonconvex case, we provide estimates on the expected value of the gap functional by applying the covering number approach. For convex objectives, and by relying on common growth conditions, we prove stronger results including convergence rates for the expected distance between minimizers,  improved estimates for the gap functional, and the suboptimality in the objective function value. The presented abstract framework is applied to both linear and bilinear PDE-constrained problems under uncertainty. We also use these applications to empirically demonstrate the sharpness of our convergence guarantees for the convex case.

Our investigation also raises new questions about the SAA approach for nonsmooth minimization. These include, for example, the extension of the results in \Cref{subsec:expectationboundsconvexproblems} to nonconvex problems assuming suitable second-order optimality conditions and taking into account the potential existence of multiple global and/or local minimizers. Moreover, an extension of our  results to risk-averse stochastic optimization and variational inequalities may be of interest.

\subsection*{Reproducibility of computational results}
Computer code that allows the reader to reproduce the computational results in
this manuscript is available at
\url{https://doi.org/10.5281/zenodo.13336970}.

\section*{Acknowledgments}
We thank the anonymous referees for their valuable comments, which have 
contributed to the improvement of our manuscript.

\appendix

\section{Uniform expectation bounds for expectation mappings}

We establish essentially known uniform expectation bounds for integrands defined on potentially infinite-dimensional
spaces.
While the techniques used in this section are
standard, the bounds are instrumental for
establishing one of our main results in \Cref{subsect:sample_size_estimates}.

Let $(\Theta, \Sigma,\mu)$ be a complete probability space,
and let $\boldsymbol{\zeta}$ be a random element with image space $\Theta$.
Moreover, let $\boldsymbol{\zeta}^1, \boldsymbol{\zeta}^2, \ldots$ be a sequence of independent $\Theta$-valued random elements
defined on a complete probability space, each $\boldsymbol{\zeta}^i$ having the same distribution as $\boldsymbol{\zeta}$.

\begin{assumption}[{sub-Gaussian \Caratheodory\ function, compact linear operator}]
    \label{assumption:uniformexpectationbounds}
    ~
    \begin{enumthm}[wide,nosep,leftmargin=*]
        \item The set $V_0$ is a nonempty, closed,
        bounded, convex
        subset of a reflexive Banach space
        $V$, $W$ is a Banach space, and $B : V \to W$ is linear and compact.
        \item The space $H$ is a separable Hilbert space,
        and $\mathsf{G} : B(V_0) \times \Theta \to H$
        is \Caratheodory.
        \item For each $w \in B(V_0)$,
        $\mathsf{G}(w,\cdot)$ is integrable, and for an integrable random variable $L_{\mathsf{G}} : \Theta \to [0,\infty)$,
        $\mathsf{G}(\cdot,\zeta)$ is Lipschitz continuous
        with Lipschitz constant $L_{\mathsf{G}}(\zeta)$
        for each $\zeta \in \Theta$.
        \item There exists $\tau_{\mathsf{G}}> 0$ such that for all $w \in B(V_0)$,
        $
            \E{\exp(\tau_{\mathsf{G}}^{-2}\norm[H]{\mathsf{G}(w,\boldsymbol{\zeta}) - \E{\mathsf{G}(w,\boldsymbol{\zeta})}}^2)} \leq \mathrm{e}
        $.
    \end{enumthm}
\end{assumption}

We define $g(w) \coloneqq \E{\mathsf{G}(w,\boldsymbol{\zeta})}$,
$\hat{g}_N(w) \coloneqq (1/N) \sum_{i=1}^N \mathsf{G}(w,\boldsymbol{\zeta}^i)$, and $L \coloneqq \E{L_{\mathsf{G}}(\boldsymbol{\zeta})}$.

\begin{proposition}
    \label{prop:uniformexpectationbounds}
    If \Cref{assumption:uniformexpectationbounds} holds, then
    for each $\nu > 0$ and  $N \in \mathbb{N}$,
    \begin{align*}
        \E{\sup_{v\in V_0} \norm[H]{g(Bv) - \hat{g}_N(Bv)}}
        &=
        \E{\sup_{w\in B(V_0)} \norm[H]{g(w) - \hat{g}_N(w)}}
        \\
        & \leq 2L\nu + \tfrac{\sqrt{3}\tau_{\mathsf{G}}}{\sqrt{N}} \big(\ln(2\mathcal{N}(\nu; B(V_0)))\big)^{1/2}.
    \end{align*}
\end{proposition}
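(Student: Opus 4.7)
The plan is to combine a $\nu$-net (metric entropy) argument with a sub-Gaussian maximal inequality. The equality of the two suprema is immediate since, as $v$ ranges over $V_0$, $Bv$ ranges over all of $B(V_0)$, so I will work directly with the right-hand supremum. By the compactness of $B$ and boundedness of $V_0$, the set $B(V_0) \subset W$ is totally bounded, so for each fixed $\nu > 0$ one may fix a $\nu$-net $\{w_1,\ldots,w_M\} \subset B(V_0)$ of cardinality $M = \mathcal{N}(\nu;B(V_0))$. The strategy is then: (a) use Lipschitz continuity of $\mathsf{G}(\cdot,\zeta)$ to reduce the continuous supremum to a finite maximum over this net, paying a deterministic discretization error, and (b) bound the expected finite maximum by a crude $\ell^2$-aggregation whose variance proxy is controlled by the sub-Gaussian hypothesis.

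For the discretization step, each $w \in B(V_0)$ has some $w_{j(w)}$ with $\|w-w_{j(w)}\|_W \leq \nu$, so \Cref{assumption:uniformexpectationbounds} gives $\|\mathsf{G}(w,\zeta)-\mathsf{G}(w_{j(w)},\zeta)\|_H \leq L_{\mathsf{G}}(\zeta)\nu$ pointwise in $\zeta$. Two triangle inequalities and passing to the supremum yield
\begin{equation*}
\sup_{w\in B(V_0)} \|g(w)-\hat g_N(w)\|_H
\leq \max_{1\leq j\leq M} \|g(w_j)-\hat g_N(w_j)\|_H
\,+\, \nu L \,+\, \nu \cdot \tfrac{1}{N}\sum_{i=1}^N L_{\mathsf{G}}(\boldsymbol{\zeta}^i).
\end{equation*}
Taking expectations and using $\E{L_{\mathsf{G}}(\boldsymbol{\zeta}^i)}=L$ produces the $2L\nu$ term in the target inequality.

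For the finite-maximum step, I would use the elementary $\E{\max_j X_j}\leq (\sum_j \E{X_j^2})^{1/2}$ (Jensen/Cauchy--Schwarz) applied to the nonnegative random variables $X_j = \|g(w_j)-\hat g_N(w_j)\|_H$. Because $H$ is a separable Hilbert space and the $\boldsymbol{\zeta}^i$ are i.i.d., the usual Bienaymé-type identity gives $\E{X_j^2} = \tfrac{1}{N}\,\E{\|\mathsf{G}(w_j,\boldsymbol{\zeta})-g(w_j)\|_H^2}$. The sub-Gaussian condition, combined with Markov's inequality applied to $\exp(\tau_{\mathsf{G}}^{-2}\|\cdot\|_H^2)$ and integration of the tail, yields $\E{\|\mathsf{G}(w_j,\boldsymbol{\zeta})-g(w_j)\|_H^2}\leq e\,\tau_{\mathsf{G}}^2 \leq 3\tau_{\mathsf{G}}^2$; summing over $j$ and taking square roots produces the second summand $\sqrt{3}\,\tau_{\mathsf{G}} N^{-1/2}(\mathcal{N}(\nu;B(V_0)))^{1/2}$.

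The main obstacle is not conceptual but technical: justifying measurability of the supremum over the uncountable set $B(V_0)$ so that the expectation on the left-hand side is well-defined. This is handled by observing that $B(V_0)$ is compact (hence separable) in $W$, that $\mathsf{G}(\cdot,\zeta)$ is Lipschitz, and therefore the supremum may be realized along a countable dense subset of $B(V_0)$, rendering it measurable as a countable supremum of measurable functions. A secondary minor point is extracting the constant $\sqrt{3}$ rather than $\sqrt{e}$; this only requires the bound $e\leq 3$ and no sharpness is claimed.
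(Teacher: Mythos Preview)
Your proof is correct. The discretization step (part (a)) coincides with the paper's argument. The finite-maximum step (part (b)) differs: the paper invokes a Hilbert-space sub-Gaussian concentration inequality of Pinelis to obtain a bound of the form $\E{\cosh(\lambda X_j)}\leq\exp(3\lambda^2\tau_{\mathsf{G}}^2/(4N))$ and then applies an external maximal-inequality lemma, which in fact yields the sharper dependence $\sqrt{\ln(2K)}$ on the covering number $K=\mathcal{N}(\nu;B(V_0))$ rather than $\sqrt{K}$. Your route---the crude bound $\E{\max_j X_j}\leq(\sum_j\E{X_j^2})^{1/2}$ together with the Bienaym\'e identity in the separable Hilbert space $H$ and the second-moment consequence of the sub-Gaussian hypothesis---is more elementary, avoids all external references, and lands exactly on the constant $\sqrt{3}\,\tau_{\mathsf{G}}N^{-1/2}\sqrt{K}$ stated in the proposition. (Incidentally, Jensen applied directly to $t\mapsto e^t$ gives the cleaner bound $\E{\|\mathsf{G}(w_j,\boldsymbol{\zeta})-g(w_j)\|_H^2}\leq\tau_{\mathsf{G}}^2$, so your Markov-plus-tail-integration detour is unnecessary, though harmless.) The trade-off is that the paper's argument is sharper in $K$ but relies on deeper Hilbert-space concentration results, while yours is self-contained but sacrifices the logarithmic improvement; since the proposition only claims the $\sqrt{K}$ bound, your argument is the more direct match.
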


\begin{proof}
    The proof is inspired by those
	of Theorems~9.84 and 9.86 in \cite{Shapiro2021}.
	We have
     $
        \norm[H]{g(w_2)-g(w_1)} \leq
        L \norm[W]{w_2-w_1}
    $
    and
    $
        \norm[H]{\hat{g}_N(w_2)-\hat{g}_N(w_1)} \leq
        \hat{L}_N \norm[W]{w_2-w_1}
    $
    for all $w_1, w_2 \in B(V_0)$,
    where  $\hat{L}_N \coloneqq (1/N) \sum_{i=1}^N L_{\mathsf{G}}(\boldsymbol{\zeta}^i)$.
    We define $K \coloneqq  \mathcal{N}(\nu; B(V_0))$.
    Since $B(V_0)$ is compact,
    $K$ is finite and there exist $w_1, \ldots, w_K \in B(V_0)$
    such that for each $w \in B(V_0)$,
    we have $\norm[W]{w-w_{k(w)}} \leq \nu$,
    where $k(w) \coloneqq  \mathrm{\arg\, \min}_{1\leq k\leq K}\, \norm[W]{w-w_k}$.
    For all $w \in B(V_0)$,
	\begin{align*}
	\begin{aligned}
	\norm[H]{\hat{g}_N(w)-g(w)}
	&\leq 	\norm[H]{\hat{g}_N(w)-\hat{g}_N(w_{k(w)})}
	+ \norm[H]{\hat{g}_N(w_{k(w)})-g(w_{k(w)})}  \\ & \quad + \norm[H]{g(w_{k(w)})-g(w)}
	\\
	& \leq \hat{L}_N \nu
	+ \norm[H]{\hat{g}_N(w_{k(w)})-g(w_{k(w)})}
	+ L \nu.
	\end{aligned}
	\end{align*}
    Hence
    \begin{align*}
        \sup_{v\in V_0} \norm[H]{g(Bv) - \hat{g}_N(Bv)}
        &= \sup_{w \in B(V_0)}
        \norm[H]{\hat{g}_N(w)-g(w)}
        \\
        &\leq \hat{L}_N \nu
	+ \max_{1\leq k\leq K}\, \norm[H]{\hat{g}_N(w_{k})-g(w_{k})}
	+ L \nu.
    \end{align*}
    Using Theorem~3 in \cite{Pinelis1986}
    and Lemma~1 in \cite{Milz2021}, we have
    $$\E{\cosh(\lambda\norm[H]{\hat{g}_N(w_{k})-g(w_{k})})} \leq
    \exp(3\lambda^2\tau_{\mathsf{G}}^2/(4N))
    \quad \text{for all}
    \quad \lambda \in \mathbb{R},
    \quad k = 1, \ldots, K.
    $$
    Now, Lemma~B.5 in \cite{Milz2022b} ensures
    $\E{\max_{1\leq k\leq K}\, \norm[H]{\hat{g}_N(w_{k})-g(w_{k})}} \leq
    \sqrt{3}\tau_{\mathsf{G}}\sqrt{\ln(2K)/N}$.
    Combined with $\E{\hat{L}_N} = L$, we obtain the expectation bound.
\end{proof}

\begin{footnotesize}

\end{footnotesize}

\begin{thebibliography}{10}

\bibitem{Alnes2015}
{\sc M.~S. Aln{\ae}s, J.~Blechta, J.~Hake, A.~Johansson, B.~Kehlet, A.~Logg,
  C.~Richardson, J.~Ring, M.~E. Rognes, and G.~N. Wells}, {\em The {FE}ni{CS}
  {P}roject {V}ersion \textnormal{1.5}}, Arch. Numer. Software, 3 (2015),
  pp.~9--23, \url{https://doi.org/10.11588/ans.2015.100.20553}.

\bibitem{An2022}
{\sc H.~An, B.~D. Youn, and H.~S. Kim}, {\em Optimal sensor placement
  considering both sensor faults under uncertainty and sensor clustering for
  vibration-based damage detection}, Struct. Multidiscip. Optim., 65 (2022),
  pp.~Paper No. 102, 32, \url{https://doi.org/10.1007/s00158-021-03159-9}.

\bibitem{Antil2023}
{\sc H.~Antil, S.~Dolgov, and A.~Onwunta}, {\em T{TRISK}: {T}ensor train
  decomposition algorithm for risk averse optimization}, Numer. Linear Algebra
  Appl., 30 (2023), pp.~Paper No. e2481, 29,
  \url{https://doi.org/10.1002/nla.2481}.

\bibitem{Artstein2004}
{\sc S.~Artstein, V.~Milman, and S.~J. Szarek}, {\em Duality of metric
  entropy}, Ann. of Math. (2), 159 (2004), pp.~1313--1328,
  \url{https://doi.org/10.4007/annals.2004.159.1313}.

\bibitem{Aubin2009}
{\sc J.-P. Aubin and H.~Frankowska}, {\em Set-{V}alued {A}nalysis}, Springer,
  Boston, 2009, \url{https://doi.org/10.1007/978-0-8176-4848-0}.

\bibitem{Behzadan2021}
{\sc A.~Behzadan and M.~Holst}, {\em Multiplication in {S}obolev spaces,
  revisited}, Ark. Mat., 59 (2021), pp.~275--306,
  \url{https://doi.org/10.4310/arkiv.2021.v59.n2.a2}.

\bibitem{Benner2016}
{\sc P.~Benner, S.~Dolgov, A.~Onwunta, and M.~Stoll}, {\em Low-rank solvers for
  unsteady {S}tokes--{B}rinkman optimal control problem with random data},
  Comput. Methods Appl. Mech. Engrg., 304 (2016), pp.~26--54,
  \url{https://doi.org/10.1016/j.cma.2016.02.004}.

\bibitem{Birman1980}
{\sc M.~{\v{S}}. Birman and M.~Z. Solomjak}, {\em Quantitative {A}nalysis in
  {S}obolev {I}mbedding {T}heorems and {A}pplications to {S}pectral {T}heory},
  AMS, Providence, RI, 1980, \url{https://doi.org/10.1090/trans2/114}.

\bibitem{Funke2013}
{\sc S.~W. Funke and P.~E. Farrell}, {\em A framework for automated
  {PDE}-constrained optimisation}, preprint,
  \url{https://arxiv.org/abs/1302.3894}, 2013.

\bibitem{Funke2016}
{\sc S.~W. Funke, S.~C. Kramer, and M.~D. Piggott}, {\em Design optimisation
  and resource assessment for tidal-stream renewable energy farms using a new
  continuous turbine approach}, Renew. Energ., 99 (2016), pp.~1046--1061,
  \url{https://doi.org/10.1016/j.renene.2016.07.039}.

\bibitem{Garreis2017}
{\sc S.~Garreis and M.~Ulbrich}, {\em Constrained optimization with low-rank
  tensors and applications to parametric problems with {PDE}s}, SIAM J. Sci.
  Comput., 39 (2017), pp.~A25--A54, \url{https://doi.org/10.1137/16M1057607}.

\bibitem{Geiersbach2020}
{\sc C.~Geiersbach and T.~Scarinci}, {\em Stochastic proximal gradient methods
  for nonconvex problems in {H}ilbert spaces}, Comput. Optim. Appl., 78 (2021),
  pp.~705--740, \url{https://doi.org/10.1007/s10589-020-00259-y}.

\bibitem{Guan1999}
{\sc J.~Guan and M.~Aral}, {\em Optimal remediation with well locations and
  pumping rates selected as continuous decision variables}, Journal of
  Hydrology, 221 (1999), pp.~20--42,
  \url{https://doi.org/10.1016/S0022-1694(99)00079-7}.

\bibitem{Guth2019}
{\sc P.~A. Guth, V.~Kaarnioja, F.~Y. Kuo, C.~Schillings, and I.~H. Sloan}, {\em
  A quasi-{M}onte {C}arlo method for optimal control under uncertainty},
  SIAM/ASA J. Uncertain. Quantif., 9 (2021), pp.~354--383,
  \url{https://doi.org/10.1137/19M1294952}.

\bibitem{Guth2024}
{\sc P.~A. Guth, V.~Kaarnioja, F.~Y. Kuo, C.~Schillings, and I.~H. Sloan}, {\em
  Parabolic {PDE}-constrained optimal control under uncertainty with entropic
  risk measure using quasi-{Monte} {Carlo} integration}, Numer. Math., 156
  (2024), pp.~565--608, \url{https://doi.org/10.1007/s00211-024-01397-9}.

\bibitem{Heitsch2016}
{\sc H.~Heitsch, H.~Le{\"o}vey, and W.~R{\"o}misch}, {\em Are quasi-{Monte}
  {Carlo} algorithms efficient for two-stage stochastic programs?}, Comput.
  Optim. Appl., 65 (2016), pp.~567--603,
  \url{https://doi.org/10.1007/s10589-016-9843-z}.

\bibitem{Herzog2012}
{\sc R.~Herzog and F.~Schmidt}, {\em Weak lower semi-continuity of the optimal
  value function and applications to worst-case robust optimal control
  problems}, Optimization, 61 (2012), pp.~685--697,
  \url{https://doi.org/10.1080/02331934.2011.603322}.

\bibitem{Hoffhues2020}
{\sc M.~Hoffhues, W.~R\"{o}misch, and T.~M. Surowiec}, {\em On quantitative
  stability in infinite-dimensional optimization under uncertainty}, Optim.
  Lett., 15 (2021), pp.~2733--2756,
  \url{https://doi.org/10.1007/s11590-021-01707-2}.

\bibitem{Kleywegt2002}
{\sc A.~J. Kleywegt, A.~Shapiro, and T.~Homem-de Mello}, {\em The sample
  average approximation method for stochastic discrete optimization}, SIAM J.
  Optim., 12 (2002), pp.~479--502,
  \url{https://doi.org/10.1137/S1052623499363220}.

\bibitem{Koivu2005}
{\sc M.~Koivu}, {\em Variance reduction in sample approximations of stochastic
  programs}, Math. Program., 103 (2005), pp.~463--485,
  \url{https://doi.org/10.1007/s10107-004-0557-0}.

\bibitem{Kouri2013}
{\sc D.~P. Kouri, M.~Heinkenschloss, D.~Ridzal, and B.~van Bloemen~Waanders},
  {\em A trust-region algorithm with adaptive stochastic collocation for {PDE}
  optimization under uncertainty}, SIAM J. Sci. Comput., 35 (2013),
  pp.~A1847--A1879, \url{https://doi.org/10.1137/120892362}.

\bibitem{Kunisch2021}
{\sc K.~Kunisch and D.~Walter}, {\em On fast convergence rates for generalized
  conditional gradient methods with backtracking stepsize}, Numer. Algebra
  Control Optim., 14 (2024), pp.~108--136,
  \url{https://doi.org/10.3934/naco.2022026}.

\bibitem{Lan2020}
{\sc G.~Lan}, {\em First-order and {S}tochastic {O}ptimization {M}ethods for
  {M}achine {L}earning}, Springer, Cham, 2020,
  \url{https://doi.org/10.1007/978-3-030-39568-1}.

\bibitem{LeCam1953}
{\sc L.~M. {Le Cam}}, {\em On some asymptotic properties of maximum likelihood
  estimates and related {B}ayes' estimates}, Univ. California Publ. Stat. 1,
  (1953), pp.~277--329, \url{https://hdl.handle.net/2027/wu.89045844305}.

\bibitem{Liu2024}
{\sc H.~Liu and J.~Tong}, {\em New sample complexity bounds for sample average
  approximation in heavy-tailed stochastic programming}, in Forty-first
  International Conference on Machine Learning, 2024.

\bibitem{Logg2012}
{\sc A.~Logg, K.-A. Mardal, and G.~N. Wells}, eds., {\em Automated {S}olution
  of {D}ifferential {E}quations by the {F}inite {E}lement {M}ethod: {T}he
  {FEniCS} {B}ook}, Springer, Berlin, 2012,
  \url{https://doi.org/10.1007/978-3-642-23099-8}.

\bibitem{Lord2014}
{\sc G.~J. Lord, C.~E. Powell, and T.~Shardlow}, {\em An {I}ntroduction to
  {C}omputational {S}tochastic {PDE}s}, Cambridge University Press, Cambridge,
  2014, \url{https://doi.org/10.1017/CBO9781139017329}.

\bibitem{Martin2021}
{\sc M.~Martin, S.~Krumscheid, and F.~Nobile}, {\em Complexity analysis of
  stochastic gradient methods for {PDE}-constrained optimal control problems
  with uncertain parameters}, ESAIM Math. Model. Numer. Anal., 55 (2021),
  pp.~1599--1633, \url{https://doi.org/10.1051/m2an/2021025}.

\bibitem{Melnikov2025}
{\sc O.~Melnikov and J.~Milz}, {\em Risk-averse optimization using randomized
  quasi-{M}onte {C}arlo methods}, J. Optim. Theory Appl., 206 (2025), p.~14,
  \url{https://doi.org/10.1007/s10957-025-02693-6}.

\bibitem{Milz2022}
{\sc J.~Milz}, {\em Consistency of {M}onte {C}arlo estimators for risk-neutral
  {PDE}-constrained optimization}, Appl. Math. Optim., 87 (2023),
  \url{https://doi.org/10.1007/s00245-023-09967-3}.

\bibitem{Milz2022c}
{\sc J.~Milz}, {\em Reliable {E}rror {E}stimates for {O}ptimal {C}ontrol of
  {L}inear {E}lliptic {PDE}s with {R}andom {I}nputs}, SIAM/ASA J. Uncertain.
  Quantif., 11 (2023), pp.~1139--1163,
  \url{https://doi.org/10.1137/22M1503889}.

\bibitem{Milz2021}
{\sc J.~Milz}, {\em Sample average approximations of strongly convex stochastic
  programs in {H}ilbert spaces}, Optim. Lett., 17 (2023), pp.~471--492,
  \url{https://doi.org/10.1007/s11590-022-01888-4}.

\bibitem{Milz2024a}
{\sc J.~Milz}, {\em milzj/fw4pde: v1.0.2}, Feb. 2024,
  \url{https://doi.org/10.5281/zenodo.10644778}.

\bibitem{johannes_milz_2024_13145219}
{\sc J.~Milz}, {\em Supplementary code for the manuscript: {E}mpirical risk
  minimization for risk-neutral composite optimal control with applications to
  bang-bang control}, Aug. 2024, \url{https://doi.org/10.5281/zenodo.13336970}.

\bibitem{Milz2022d}
{\sc J.~Milz}, {\em Consistency of sample-based stationary points for
  infinite-dimensional stochastic optimization}, SIAM J. Optim., 35 (2025),
  pp.~42--61, \url{https://doi.org/10.1137/23M1600608}.

\bibitem{Milz2022a}
{\sc J.~Milz and T.~M. Surowiec}, {\em Asymptotic consistency for nonconvex
  risk-averse stochastic optimization with infinite dimensional decision
  spaces}, Math. Oper. Res., 49 (2024), pp.~1403--1418,
  \url{https://doi.org/10.1287/moor.2022.0200}.

\bibitem{Milz2022b}
{\sc J.~Milz and M.~Ulbrich}, {\em Sample size estimates for risk-neutral
  semilinear {PDE}-constrained optimization}, SIAM J. Optim., 34 (2024),
  pp.~844--869, \url{https://doi.org/10.1137/22M1512636}.

\bibitem{Mitusch2019}
{\sc S.~K. Mitusch, S.~W. Funke, and J.~S. Dokken}, {\em dolfin-adjoint
  \normalfont{2018.1}: automated adjoints for {FEniCS} and {F}iredrake}, J.
  Open Source Softw., 4 (2019), p.~1292,
  \url{https://doi.org/10.21105/joss.01292}.

\bibitem{Nemirovski2009}
{\sc A.~Nemirovski, A.~Juditsky, G.~Lan, and A.~Shapiro}, {\em Robust
  stochastic approximation approach to stochastic programming}, SIAM J. Optim.,
  19 (2009), pp.~1574--1609, \url{https://doi.org/10.1137/070704277}.

\bibitem{Nordaas2016}
{\sc M.~Nordaas and S.~W. Funke}, {\em The {M}oola optimisation package},
  \url{https://github.com/funsim/moola}, 2016.

\bibitem{Owen1998}
{\sc A.~B. Owen}, {\em Scrambling {Sobol}' and {Niederreiter}-{Xing} points},
  J. Complexity, 14 (1998), pp.~466--489,
  \url{https://doi.org/10.1006/jcom.1998.0487}.

\bibitem{PACE2017}
{\sc PACE}, {\em {P}artnership for an {A}dvanced {C}omputing {E}nvironment
  ({PACE})}, 2017, \url{http://www.pace.gatech.edu}.

\bibitem{Pennanen2005}
{\sc T.~Pennanen and M.~Koivu}, {\em Epi-convergent discretizations of
  stochastic programs via integration quadratures}, Numer. Math., 100 (2005),
  pp.~141--163, \url{https://doi.org/10.1007/s00211-004-0571-4}.

\bibitem{Pinelis1986}
{\sc I.~F. Pinelis and A.~I. Sakhanenko}, {\em Remarks on inequalities for
  large deviation probabilities}, Theory Probab. Appl., 30 (1986),
  pp.~143--148, \url{https://doi.org/10.1137/1130013}.

\bibitem{Roemisch2021}
{\sc W.~R{\"o}misch and T.~M. Surowiec}, {\em Asymptotic properties of {Monte}
  {Carlo} methods in elliptic {PDE}-constrained optimization under
  uncertainty}, Numer. Math., 156 (2024), pp.~1887--1914,
  \url{https://doi.org/10.1007/s00211-024-01436-5}.

\bibitem{Roy2023}
{\sc P.~T. Roy, A.~B. Owen, M.~Balandat, and M.~Haberland}, {\em Quasi-{M}onte
  {C}arlo methods in {P}ython}, Journal of Open Source Software, 8 (2023),
  p.~5309, \url{https://doi.org/10.21105/joss.05309}.

\bibitem{Shalev-Shwartz2010}
{\sc S.~Shalev-Shwartz, O.~Shamir, N.~Srebro, and K.~Sridharan}, {\em
  Learnability, stability and uniform convergence}, J. Mach. Learn. Res., 11
  (2010), pp.~2635--2670.

\bibitem{Shapiro2021}
{\sc A.~Shapiro, D.~Dentcheva, and A.~Ruszczy{\'{n}}ski}, {\em Lectures on
  {S}tochastic {P}rogramming: {M}odeling and {T}heory}, SIAM, Philadelphia, PA,
  3rd~ed., 2021, \url{https://doi.org/10.1137/1.9781611976595}.

\bibitem{Tiesler2012}
{\sc H.~Tiesler, R.~M. Kirby, D.~Xiu, and T.~Preusser}, {\em Stochastic
  collocation for optimal control problems with stochastic {PDE} constraints},
  SIAM J. Control Optim., 50 (2012), pp.~2659--2682,
  \url{https://doi.org/10.1137/110835438}.

\bibitem{Zahr2019}
{\sc M.~J. Zahr, K.~T. Carlberg, and D.~P. Kouri}, {\em An efficient, globally
  convergent method for optimization under uncertainty using adaptive model
  reduction and sparse grids}, SIAM/ASA J. Uncertain. Quantif., 7 (2019),
  pp.~877--912, \url{https://doi.org/10.1137/18M1220996}.

\end{thebibliography}
\end{document}